\documentclass[12pt,a4paper,reqno]{amsart}
\usepackage[english]{babel}
\usepackage[applemac]{inputenc}
\usepackage[T1]{fontenc}
\usepackage{palatino}
\usepackage{amsmath}
\usepackage{amssymb}
\usepackage{amsthm}
\usepackage{amsfonts}
\usepackage{graphicx}
\usepackage[colorlinks = true, citecolor = black]{hyperref}
\pagestyle{headings}
\author{Katrin F\"assler and Tuomas Orponen}
\thanks{K.F. was supported by the Swiss National Science Foundation and by the Academy of Finland, project number 252293. T.O. was partially supported by the Academy of Finland, grant 133264 "Stochastic and harmonic analysis, interactions and applications".}

\title{On restricted families of projections in $\R^{3}$}
\address{Department of Mathematics and Statistics, University of Helsinki, P.O.B. 68, FI-00014 Helsinki, Finland}
\subjclass[2010]{28A80 (Primary); 28A78, 37C45 (Secondary).}
\email{katrin.fassler@helsinki.fi \\ tuomas.orponen@helsinki.fi}

\newcommand{\R}{\mathbb{R}}
\newcommand{\N}{\mathbb{N}}

\newcommand{\cH}{\mathcal{H}}
\newcommand{\Bd}{\overline{\dim}_{\mathrm{B}}}
\newcommand{\cD}{\mathcal{D}}

\newcommand{\cE}{\mathcal{E}}

\newcommand{\Hd}{\dim_{\mathrm{H}}}
\newcommand{\Pd}{\dim_{\mathrm{p}}}

\newcommand{\spa}{\operatorname{span}}
\newcommand{\diam}{\operatorname{diam}}

\numberwithin{equation}{section}

\theoremstyle{plain}
\newtheorem{thm}[equation]{Theorem}
\newtheorem{lemma}[equation]{Lemma}

\newtheorem{cor}[equation]{Corollary}
\newtheorem{proposition}[equation]{Proposition}

\newtheorem{conjecture}[equation]{Conjecture}

\theoremstyle{definition}
\newtheorem{definition}[equation]{Definition}

\theoremstyle{remark}
\newtheorem{remark}[equation]{Remark}

\addtolength{\hoffset}{-1.15cm}
\addtolength{\textwidth}{2.3cm}
\addtolength{\voffset}{0.45cm}
\addtolength{\textheight}{-0.9cm}

\begin{document}

\begin{abstract} We study projections onto non-degenerate one-dimensional families of lines and planes in $\R^{3}$. Using the classical potential theoretic approach of R. Kaufman, one can show that the Hausdorff dimension of at most $1/2$-dimensional sets $B \subset \R^{3}$ is typically preserved under one-dimensional families of projections onto lines. We improve the result by an $\varepsilon$, proving that if $\Hd B = s > 1/2$, then the packing dimension of the projections is almost surely at least $\sigma(s) > 1/2$. For projections onto planes, we obtain a similar bound, with the threshold $1/2$ replaced by $1$. In the special case of self-similar sets $K \subset \R^{3}$ without rotations, we obtain a full Marstrand type projection theorem for one-parameter families of projections onto lines. The $\Hd K \leq 1$ case of the result follows from recent work of M. Hochman, but the $\Hd K > 1$ part is new: with this assumption, we prove that the projections have positive length almost surely.

\end{abstract}

\maketitle

\tableofcontents

\section{Introduction}

The study of orthogonal projections has a long history in the field of geometric measure theory. The foundations were laid in the late thirties by A.S. Besicovitch, who established that the structural properties of sets of Hausdorff dimension one in $\R^{2}$ are reflected in the size of their orthogonal projections into lines through the origin. For planar sets of arbitrary dimension, a major breakthrough appeared some 15 years later, in the 1954 paper \cite{Mar} by J.M. Marstrand. He demonstrated that for sets $B \subset \R^{2}$ with dimension at most one, almost all projections have the same dimension, while the assumption $\Hd B > 1$ guarantees that almost all projections have positive length. In the present article, we aim for results of this nature in $\R^{3}$.

In $\R^{d}$, $d \geq 3$, there are at least $(d - 1)$ natural generalisations of Marstrand's theorem. Namely, one may pick $k \in \{1,2,\ldots,d - 1\}$ and start asking questions about orthogonal projections onto $k$-dimensional subspaces. In this situation, it turns out that the dimension of at most $k$-dimensional sets $B \subset \R^{d}$ is preserved under almost all projections, while $\Hd B > k$ suffices for positive $k$-dimensional measure -- but, again, only for almost all projections. The generalisation, published in 1975, is due to P. Mattila \cite{Mat}.

The results are complete, save for the word "almost". A moment's thought reveals that the word may not be entirely omitted, as it is easy to come up with examples of sets $B \subset \R^{d}$ (line segments, for instance) for which the dimension of projections is less than $\Hd B$ for a few \emph{exceptional} subspaces. This does not mean that results sharper than the ones by Marstrand and Mattila are not possible -- and indeed they are: numerous such improvements have appeared since 1954. A particularly elegant one is due to R. Kaufman \cite{Ka} from 1968: given $0 < s < 1$, a family $\mathcal{L}$ of lines through the origin in $\R^{2}$ such that $\Hd \{L \cap S^{1} : L \in \mathcal{L}\} = s$, and a set $B \subset \R^{2}$ with $\Hd B < s$, one may always find a line $L \in \mathcal{L}$ such that the projection of $B$ into $L$ has dimension $\Hd B$. Moreover, the result is sharp in the sense that it may fail if $\Hd B = s$. This was shown by Kaufman and Mattila \cite{KM} in 1975: for any $0 < s < 1$, they managed to construct an $s$-dimensional set $B \subset \R^{2}$, the dimension of the projections of which drops strictly below $s$ for a certain $s$-dimensional family $\mathcal{L} = \mathcal{L}_{B}$ of lines through the origin.

To sum up, in casual terms, the results mentioned so far, it has been known for quite some time that orthogonal projections preserve dimension almost surely, but the family of exceptional projections can be fairly large. A question that remains, to date, largely unanswered, can be phrased as follows: What is the structure of exceptional sets (of subspaces)? For instance, the construction of Kaufman and Mattila from 1975 essentially relies on the fact that both $B$ and $\mathcal{L}_{B}$ can be chosen freely, and so as to play well together: it is far from clear that a less carefully chosen $\mathcal{L}$ would consist entirely of exceptional lines for \textbf{any} $s$-dimensional set $B$. 

In $\R^{2}$, the question is wide open, but in $\R^{3}$ some understanding is emerging. For those interested in a philosophical reason for how $\R^{3}$ can possibly be easier than $\R^{2}$, it is that the full families of projections onto one- and two-dimensional subspaces in $\R^{3}$ are two-dimensional -- they can both be naturally parametrised by the unit sphere $S^{2}$ -- so one can ask non-trivial questions about one-dimensional subfamilies. In the plane, however, the full family of lines is only one-dimensional, so the interesting questions necessarily concern subfamilies of fractional dimension, and, at the moment the research seems to be even devoid of plausible conjectures.

For the rest of the introduction -- and indeed the paper -- we will be concerned with projections in $\R^{3}$ onto one- and two-dimensional subspaces. More precisely, we are interested in smooth one-dimensional subfamilies of the full (two-dimensional) families. For the moment, let us fix such a family $\mathcal{L}$ of one-dimensional subspaces (essentially the same considerations are relevant for families of two-dimensional subspaces, so we will not grant them a separate treatment in this informal discussion). The smoothness is quantified here by parametrising the family $\mathcal{L}$ by a smooth path $\gamma \colon U \to S^{2}$ (here $U \subset \R$ is an open interval) so that
\begin{displaymath} \mathcal{L} = (\ell_{\theta})_{\theta \in U} := \{\operatorname{span}(\gamma(\theta)) : \theta \in U\}. \end{displaymath}
The union of the lines in $\mathcal{L}$ forms a surface $S$, and it turns out that the curvature of $S$ plays a crucial role in our investigation. The necessity of curvature for non-trivial results is easy to see: if $S$ is completely flat, that is, contained in a single two-dimensional subspace $V$, then so are all the lines in $\mathcal{L}$, and the projection of the one-dimensional set $B = V^{\perp}$ into each line in $\mathcal{L}$ is the singleton $\{0\}$. Then $\mathcal{L}$ consists entirely of exceptional lines with respect to $B$, in a very strong sense. More generally, if $S$ is contained in a countable union of two-dimensional subspaces, then $\mathcal{L}$ can be shown to be entirely exceptional for some one-dimensional set $B \subset \R^{3}$ -- again in the sense that the dimension of the projection of $B$ into $L$ is zero for every line $L \in \mathcal{L}$. 

For one-dimensional families of planes, the "non-curved" situation is slightly more subtle, and for low-dimensional families of $k$-dimensional subspaces in $\R^{d}$, the subtlety increases still: nevertheless, the best possible projection results \textbf{without} curvature conditions are known for all pairs $k < d$, due to the work of M. J\"arvenp\"a\"a, E. J\"arvenp\"a\"a, T. Keleti, F. Ledrappier and M. Leikas, see \cite{JJK} and \cite{JJLL}. At any rate, if we are interested in progress towards a Marstrand type theorem for $\mathcal{L}$, or for one-dimensional families of two-dimensional subspaces, we need to assume some curvature. Stated in terms of the parametrising path $\gamma$, the condition used in the present paper reads as follows:

\begin{definition}[Non-degenerate families]\label{d:non-deg} Let $U \subset \R$ be an open interval, and let $\gamma: U \to S^2$ be a $\mathcal{C}^{3}$-curve on the unit sphere in $\mathbb{R}^3$ satisfying the condition
\begin{equation}\label{eq:curve_cond} \spa(\{\gamma(\theta),\dot{\gamma}(\theta),\ddot{\gamma}(\theta)\}) = \R^{3}, \qquad \theta \in U. \end{equation}
To each point $\gamma(\theta)$, $\theta \in U$, we assign the line $\ell_{\theta} = \spa(\gamma(\theta))$. Any family of lines $(\ell_{\theta})_{\theta \in U}$ so obtained is called a \emph{non-degenerate family of lines}. The orthogonal complements $V_{\theta} := \ell_{\theta}^{\perp}$ form a one-dimensional family of planes. Any family of planes $(V_{\theta})_{\theta \in U}$ so obtained is called a \emph{non-degenerate family of planes}.
\end{definition}

As we have indicated, we are interested in projections corresponding to non-degenerate families of lines and planes. For these, we use the following notation:
\begin{definition}[Projections $\rho_{\theta}$ and $\pi_{\theta}$]\label{projections} If $(\ell_{\theta})_{\theta \in U}$ is a non-degenerate family of lines associated with the curve $\gamma \colon U \to S^{2}$, we write $\rho_{\theta} \colon \R^{3} \to \R$ for the orthogonal projection
\begin{displaymath} \rho_{\theta}(x) = \gamma(\theta) \cdot x.  \end{displaymath}
Thus, we interpret the projection onto the line $\ell_{\theta}$ spanned by $\gamma({\theta})$ as a subset of $\R$. Given a non-degenerate family of planes $(V_{\theta})_{\theta \in U}$, we denote by $\pi_{\theta} \colon \R^{3} \to \R^{2}$ the orthogonal projection onto the plane $V_{\theta}$, identified with $\R^{2}$. Projection families of the form $(\rho_{\theta})_{\theta \in U}$ and $(\pi_{\theta})_{\theta \in U}$ will be referred to as \emph{non-degenerate families of projections}.
\end{definition}

\begin{remark} The terminology "non-degenerate families of projections" is also used in the papers \cite{JJK} and \cite{JJLL} mentioned above, where no curvature assumptions are imposed. So, our definition is more restrictive, despite the common name. \end{remark}

\subsection{Dimension estimates for general sets}

Unless otherwise stated, $(\rho_{\theta})_{\theta \in U}$ and $(\pi_{\theta})_{\theta \in U}$ will always stand for non-degenerate families of projections onto lines and planes, respectively. When $\Hd B$ lies on certain intervals, dimension conservation for non-degenerate families of projections can be proven directly using the classical `potential theoretic' method pioneered by R. Kaufman in \cite{Ka}. These bounds are the content of the following proposition.
\begin{proposition}\label{prop1} Let $B \subset \R^{3}$ be an analytic set.
\begin{itemize}
\item[(a)] If $\Hd B \leq 1/2$, then $\Hd \rho_{\theta}(B) = \Hd B$ almost surely.
\item[(b)] If $\Hd B \leq 1$, then $\Hd \pi_{\theta}(B) = \Hd B$ almost surely.
\end{itemize}
\end{proposition}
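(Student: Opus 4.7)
The plan is to follow Kaufman's potential-theoretic approach \cite{Ka} for both parts. Fix $s < \Hd B$ with $s < 1/2$ (resp.\ $s < 1$ for part (b)). By Frostman's lemma applied to the analytic set $B$, there exists a Borel probability measure $\mu$ on $B$ with finite $s$-energy
\begin{displaymath}
I_{s}(\mu) = \iint \frac{d\mu(x)\, d\mu(y)}{|x-y|^{s}} < \infty.
\end{displaymath}
The goal is to bound $\int_{U} I_{s}(\rho_{\theta}\mu)\, d\theta$ (resp.\ the same integral with $\pi_{\theta}$), since finiteness of this integral forces $I_{s}(\rho_{\theta}\mu) < \infty$ for a.e.\ $\theta$, and consequently $\Hd \rho_{\theta}(B) \geq s$ almost surely. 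Letting $s \nearrow \Hd B$ along a countable sequence yields the desired lower bound; the reverse bound is immediate since $\rho_{\theta}$ and $\pi_{\theta}$ are $1$-Lipschitz.

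For part (a), Fubini reduces the task to establishing the uniform estimate
\begin{displaymath}
\sup_{e \in S^{2}} \int_{U'} \frac{d\theta}{|\gamma(\theta) \cdot e|^{s}} < \infty
\end{displaymath}
for any compact subinterval $U' \subset U$, and for all $s < 1/2$. This follows from a standard sub-level set argument: setting $\phi_{e}(\theta) := \gamma(\theta) \cdot e$, the non-degeneracy condition \eqref{eq:curve_cond} together with compactness guarantees the uniform lower bound
\begin{displaymath}
\max\{|\phi_{e}(\theta)|,\; |\dot{\phi}_{e}(\theta)|,\; |\ddot{\phi}_{e}(\theta)|\} \geq c_{0} > 0, \qquad (\theta,e) \in U' \times S^{2}.
\end{displaymath}
A partition-of-$U'$-according-to-which-derivative-is-large argument, applied to $\phi_{e}$ on intervals on which $\ddot{\phi}_{e}$ has constant sign, yields $|\{\theta \in U' : |\phi_{e}(\theta)| < t\}| \lesssim t^{1/2}$ uniformly in $e$. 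Integrating the layer-cake representation then produces a bound independent of $e$, valid precisely when $s < 1/2$.

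For part (b), note that
\begin{displaymath}
|\pi_{\theta}(x) - \pi_{\theta}(y)|^{2} = |x-y|^{2}\bigl(1 - (\gamma(\theta) \cdot e_{xy})^{2}\bigr),
\end{displaymath}
where $e_{xy} = (x-y)/|x-y|$, so after Fubini we need
\begin{displaymath}
\sup_{e \in S^{2}} \int_{U'} \frac{d\theta}{(1 - (\gamma(\theta) \cdot e)^{2})^{s/2}} < \infty
\end{displaymath}
for $s < 1$. The only zeros of the denominator are the parameters $\theta_{0}$ with $\gamma(\theta_{0}) = \pm e$; at such $\theta_{0}$, using $\dot{\gamma} \cdot \gamma \equiv 0$ and $\ddot{\gamma} \cdot \gamma = -|\dot{\gamma}|^{2}$ (consequences of $|\gamma|\equiv 1$ together with $\dot\gamma\neq 0$, which \eqref{eq:curve_cond} implies), one finds $1 - (\gamma(\theta)\cdot e)^{2} \asymp |\dot\gamma(\theta_{0})|^{2}(\theta-\theta_{0})^{2}$. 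Equivalently, $\dot{\gamma}\neq 0$ makes $\gamma : U \to S^{2}$ a local $C^{1}$ embedding, so the sub-level sets $\{\theta : \operatorname{dist}(\gamma(\theta),\{\pm e\}) < t\}$ have linear measure $\lesssim t$. Layer-cake integration gives the sought bound for $s < 1$.

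The main obstacle in both parts is the uniformity (in $e \in S^{2}$) of the sub-level set estimates. For (b) this requires only the transversal behavior $\dot\gamma \neq 0$ on the sphere, but for (a) it genuinely uses the full linear independence of $\{\gamma,\dot\gamma,\ddot\gamma\}$; without this one could have a direction $e$ orthogonal to $\spa\{\gamma(\theta_{0}),\dot\gamma(\theta_{0}),\ddot\gamma(\theta_{0})\}$, forcing $\phi_{e}$ to have a triple zero and wrecking the $t^{1/2}$ decay.
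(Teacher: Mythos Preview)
Your proposal is correct and follows the same route as the paper: Kaufman's potential-theoretic reduction via Frostman and Fubini to a uniform sub-level estimate for $\theta\mapsto \gamma(\theta)\cdot e$ (resp.\ $\theta\mapsto 1-(\gamma(\theta)\cdot e)^{2}$), which is then extracted from the non-degeneracy condition together with a van der Corput/Christ-type bound and compactness. The only cosmetic difference is in how uniformity in $e\in S^{2}$ is secured: the paper first bounds the number of zeros of $\phi_{e}$ on $U'$ (Lemma~\ref{l:no_zeros}, via Rolle's theorem and \eqref{eq:curve_cond}) and then covers $S^{2}$ by finitely many neighbourhoods, instead of partitioning by the sign of $\ddot\phi_{e}$.
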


Part (b) follows from \cite[Proposition 3.2]{JJLL}. The proof of part (a) is standard: we include it mainly to identify the `enemy' against which we have to combat in order to obtain an improvement, but also because the proof contains certain sub-level estimates needed to prove Theorem \ref{main1}. Before stating any further results, let us formulate a conjecture:
\begin{conjecture}\label{dimensionConservation} In Proposition \ref{prop1}(a), the hypothesis $\Hd B \leq 1/2$ can be relaxed to $\Hd B \leq 1$. In part (b), the hypothesis $\Hd B \leq 1$ can be relaxed to $\Hd B \leq 2$.
\end{conjecture}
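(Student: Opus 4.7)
This is the central open problem of the paper, and the plan is to push the potential-theoretic method past its natural $L^{2}$ barrier.

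For a Frostman measure $\mu$ on $B$ of exponent $s$, the standard calculation reduces matters to bounding the sub-level sets $\{\theta \in U : |\gamma(\theta) \cdot e| < \delta\}$, which, under \eqref{eq:curve_cond}, have measure $\lesssim \delta^{1/2}$ (the condition says $\gamma$ is a fully non-degenerate curve on $S^{2}$, so $\gamma \cdot e$ vanishes to order at most two). This produces exactly the thresholds $s < 1/2$ in (a) and $s < 1$ in (b) of Proposition~\ref{prop1}. To push further, I would move to Fourier space. By Parseval, the averaged $s$-energy satisfies
\[
\int_{U} I_{s}(\rho_{\theta}\mu)\, d\theta \;\approx\; \int_{U} \int_{0}^{\infty} |\widehat{\mu}(r\gamma(\theta))|^{2}\, r^{s-1}\, dr\, d\theta,
\]
which, after the change of variables $\xi = r\gamma(\theta)$, becomes an integral of $|\widehat{\mu}|^{2}$ against a weight on the cone $C = \bigcup_{\theta \in U} \ell_{\theta}$. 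Condition \eqref{eq:curve_cond} is precisely what forces $C$ to be a ruled surface of non-vanishing Gaussian curvature away from the apex, so restriction and decoupling estimates (Stein-type theorems for conic surfaces with curvature, or Bourgain--Demeter $\ell^{2}$-decoupling) become in principle applicable. The key step would be to combine such a restriction estimate with the Frostman condition on $\mu$ to bound $\int_{C} |\widehat{\mu}(\xi)|^{2}\, d\sigma(\xi)$ in terms of the $s$-energy of $\mu$, for every $s < 1$, thereby closing the loop. For part (b), I would dualize: the projections onto $V_{\theta}$ are complementary to $\rho_{\theta}$, and a two-dimensional analogue of the above applies, with $C$ replaced by the developable surface swept out by the planes $V_{\theta}$, whose tangent planes rotate non-trivially thanks to \eqref{eq:curve_cond}.

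The hard part is that the required restriction-type estimate becomes sharp precisely at the exponents one wishes to exceed, namely $s \in (1/2, 1)$ for lines and $s \in (1, 2)$ for planes; decoupling alone should at best yield an $\varepsilon$-room improvement over Kaufman's threshold, which I suspect is the level at which the authors' main theorems live. Closing the remaining gap seems to require either a genuinely three-dimensional discretized projection theorem for one-parameter families of directions, in the spirit of Bourgain's discretized sum-product theorem on the line, or a new multilinear Kakeya-type input that encodes the curvature hypothesis \eqref{eq:curve_cond} quantitatively at all scales simultaneously. I expect this is where a proof would have to introduce genuinely new ideas rather than refine existing ones, and it is the reason the statement remains a conjecture rather than a theorem.
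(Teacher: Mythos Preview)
The statement is a conjecture that the paper explicitly does not prove; immediately after stating it the authors write that they ``fall short of proving the conjecture in two ways,'' and their main positive result (Theorem~\ref{main1}) is precisely the packing-dimension $\varepsilon$-improvement you anticipate. So there is no paper proof to compare against. That said, two concrete issues with your sketch are worth flagging.

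First, the surface $C = \bigcup_{\theta} \ell_{\theta}$ is a cone with vertex at the origin, hence developable, with identically vanishing Gaussian curvature; your claim of ``non-vanishing Gaussian curvature away from the apex'' is simply false. Restriction estimates for such cones do exist, but they are governed by the single non-vanishing principal curvature (equivalently, the curvature of $\gamma$ on $S^{2}$), and are correspondingly weaker than for elliptic hypersurfaces. This changes the numerology you would need.

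Second, and more decisively, the paper itself (in the discussion opening \S3.2) explains why the averaged-energy approach cannot prove part (a): if $B$ lies on a bad line $b_{\theta_{0}} = \spa(\gamma(\theta_{0}) \times \dot{\gamma}(\theta_{0}))$, then $\int_{0}^{1} I_{t}(\rho_{\theta\sharp}\mu)\,d\theta = \infty$ for every $t > 1/2$ and every measure $\mu$ supported on $B$. Your Fourier-side quantity $\int_{U}\int_{0}^{\infty} |\widehat{\mu}(r\gamma(\theta))|^{2} r^{s-1}\,dr\,d\theta$ is, by Parseval, the same integral, so it diverges on the same obstruction. No restriction or decoupling input can salvage a quantity that is identically infinite for a non-empty class of admissible Frostman measures; one must change the quantity being estimated, not merely the tool used to estimate it. The paper's partial result (Theorem~\ref{main1}) accordingly abandons the energy route entirely in favour of a discrete two-ends argument: if projections are small on one arc $I$ of parameters, the set must concentrate near the bad cone $C_{I}^{\rho}$, and then the non-parallelism $C_{I}^{\rho} \not\parallel C_{J}^{\rho}$ forces projections to be large on a second arc $J$. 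This structural dichotomy is orthogonal to the harmonic-analytic line you propose, and is closer in spirit to the ``discretized projection theorem'' direction you mention only at the end.
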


We fall short of proving the conjecture in two ways: first, we are only able to obtain a non-trivial lower bound for the \emph{packing dimension} $\Pd$ (see \cite[\S5.9]{Mat3}) of the projections, and, second, our bound is much weaker than the full dimension conservation conjectured above (the first shortcoming has already been partially overcome in later work, see \cite{OO} and \cite{Or3}). Our first main result is the following:

\begin{thm}\label{main1} Let $B \subset \R^{3}$ be an analytic set, and write $s := \Hd B$.
\begin{itemize}
\item[(a)] If $s > 1/2$, there exists a number $\sigma_{1} = \sigma_{1}(s) > 1/2$ such that $\Pd \rho_{\theta}(B)\geq \sigma_{1}$ almost surely.
\item[(b)] If $s > 1$, there exists a number $\sigma_{2} = \sigma_{2}(s) > 1$ such that $\Pd \pi_{\theta}(B) \geq \sigma_{2}$ almost surely.
\end{itemize}
\end{thm}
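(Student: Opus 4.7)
I would approach Theorem \ref{main1} by pushing the potential-theoretic argument behind Proposition \ref{prop1} to the critical exponent, and extracting an $\varepsilon$-gain through a multi-scale dyadic decomposition. Fix $t \in (1/2, s)$ and let $\mu$ be a Frostman measure on $B$ with $\mu(B(x, r)) \leq r^{t}$; write $\nu_{\theta} := \rho_{\theta \ast} \mu$. From the non-degeneracy condition \eqref{eq:curve_cond}, the function $\theta \mapsto \gamma(\theta) \cdot v$ has zeros of order at most two, whence
\begin{displaymath}
|\{\theta \in U : |\gamma(\theta) \cdot v| \leq r\}| \lesssim (r/|v|)^{1/2}, \qquad v \in \R^{3} \setminus \{0\}, \quad r > 0.
\end{displaymath}
Integrating $\mathbf{1}_{|y_{1} - y_{2}| \leq \delta}$ against $d\nu_{\theta} \, d\nu_{\theta} \, d\theta$ and using Fubini together with $I_{1/2}(\mu) < \infty$ (available because $t > 1/2$) then yields the critical $L^{2}$ estimate
\begin{displaymath}
\int_{U} \int_{\R} \nu_{\theta}(B(y, \delta)) \, d\nu_{\theta}(y) \, d\theta \lesssim \delta^{1/2}, \qquad \delta > 0.
\end{displaymath}

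I would then reduce the packing-dimension conclusion to a pointwise sparseness statement. The lower bound $\Pd \rho_{\theta}(B) \geq \sigma$ is implied by $\overline{D}(\nu_{\theta}, y) \geq \sigma$ for $\nu_{\theta}$-a.e.\ $y$, so it suffices to produce, for a.e.\ $\theta$ and $\nu_{\theta}$-a.e.\ $y$, a sequence of dyadic scales $\delta_{n} = \delta_{n}(y, \theta) \to 0$ along which $\nu_{\theta}(B(y, \delta_{n})) \leq \delta_{n}^{1/2 + \varepsilon}$. Note that this asks for $\nu_{\theta}$ to be \emph{strictly better} than average at infinitely many scales, while the $L^{1}$-bound above only asserts an equality on average; hence extra structural input is needed.

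The main obstacle, and the technical core, is to rule out sustained saturation of the critical estimate. The enemy is a $\nu_{\theta}$ comparable to a $(1/2)$-regular measure at every small scale on a set of positive $\mu$-mass; via $\rho_{\theta}$ this translates to a subset $B_{0} \subset B$ with $\mu(B_{0}) > 0$ on which difference vectors $v = x - x'$ cluster near the one-parameter family
\begin{displaymath}
L(\theta) := \spa\{\gamma(\theta), \dot{\gamma}(\theta)\}^{\perp},
\end{displaymath}
which by \eqref{eq:curve_cond} consists of precisely those $v$ giving \emph{double} zeros of $\theta \mapsto \gamma(\theta) \cdot v$. A dyadic pigeonholing on $\mu$ should isolate such a $B_{0}$ at each scale; iterating the concentration across many scales then yields a packing-type upper bound for $B_{0}$ that violates the Frostman condition $\mu(B(x, r)) \leq r^{t}$ with $t > 1/2$. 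Quantifying the contradiction gives the required $\sigma_{1}(s) = 1/2 + \varepsilon(s) > 1/2$.

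Part (b) would be proven by the same scheme, with the sub-level estimate $|\{\theta \in U : |\pi_{\theta}(v)| \leq r\}| \lesssim r/|v|$ in place of the $(1/2)$-power, the threshold $1/2$ replaced by $1$ throughout, and the enemy direction $L(\theta)$ replaced by $\spa\{\gamma(\theta)\}$. The technical difficulty is the same: combining the per-scale estimates into a coherent infinite-scale statement so that the per-scale gain $\varepsilon$ survives to the final packing-dimension bound.
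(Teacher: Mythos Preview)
Your setup is correct through the critical $L^{2}$ estimate and the identification of the enemy: saturation of the bound $\int \nu_{\theta}(B(y,\delta))\,d\nu_{\theta}(y)\,d\theta \lesssim \delta^{1/2}$ does force many difference vectors $x-x'$ to lie near the one-parameter family of bad lines $L(\theta)=\spa\{\gamma(\theta),\dot\gamma(\theta)\}^{\perp}$. The gap is in the next step. You assert that ``iterating the concentration across many scales then yields a packing-type upper bound for $B_{0}$ that violates the Frostman condition $\mu(B(x,r))\leq r^{t}$ with $t>1/2$'', but this does not follow from what you have. The union $\bigcup_{\theta} L(\theta)$ is a two-dimensional cone in $\R^{3}$, so a set whose differences cluster near that cone can perfectly well have dimension up to $2$; there is no contradiction with $t>1/2$. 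Even if you localise to a single $\theta_{0}$ and get differences near the single line $L(\theta_{0})$, that only bounds $\Hd B_{0}\leq 1$, which is again no contradiction for $s\in(1/2,1]$. The multi-scale pigeonholing you sketch produces the concentration, but concentration near a surface does not by itself squeeze the dimension below any threshold above $1/2$.

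The paper's argument supplies exactly the missing geometric mechanism. Rather than seeking a contradiction from concentration near the whole bad cone, it uses the non-degeneracy condition \eqref{eq:curve_cond} to find two short disjoint intervals $I,J\subset[0,1]$ for which the corresponding cone pieces satisfy $C_{I}^{\rho}\not\parallel C_{J}^{\rho}$: differences of points in $C_{I}^{\rho}$ stay uniformly away from the directions in $C_{J}^{\rho}$. The counter-assumption is made on \emph{both} intervals simultaneously. Small projections for $\theta\in E_{I}$ force, via the energy computation you wrote down, a large subset $\tilde P\subset P$ lying in a $\delta^{\tau}$-neighbourhood of a translate of $C_{I}^{\rho}$. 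The non-parallelism then makes the projections $\rho_{\theta}|_{\tilde P}$ for $\theta\in J$ essentially bi-Lipschitz, so $|\rho_{\theta}(\tilde P(\delta))|$ is large for $\theta\in E_{J}$, contradicting the counter-assumption there. (For part (a) the argument is run twice, once on $I$ and once on $J$, with a quantitative balance of parameters.) In short, the enemy is defeated not by bounding its dimension directly, but by observing that living near one piece of the bad cone makes you transparent to projections indexed by the other piece. Your proposal is missing this two-interval interplay, and without it the contradiction you claim does not materialise.
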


\begin{remark}\label{bestBounds} The lower bounds for $\sigma_{1}(s)$ and $\sigma_{2}(s)$ given by the proof of Theorem \ref{main1} are most likely not optimal and certainly not very informative. They are
\begin{displaymath} \sigma_{1}(s) \geq \frac{1}{2} + \frac{1}{2} \cdot \frac{(2s - 1)^{2}}{12s^{2} + 4s - 1}  \quad \text{and} \quad \sigma_{2}(s) \geq 1 + \frac{(s - 1)^{2}}{2s - 1}. \end{displaymath}
For $\sigma_{1}(s)$ we also have the easy bound $\sigma_{1}(s) \geq s/2$, see Proposition \ref{s/2}. The two bounds are equal when $s \approx  1.077$.
\end{remark}

The proof of Theorem \ref{main1} involves analysing the (hypothetical) situation where the packing dimension of the projections of $B$ drops in positively many directions very close to the `classical' bounds given by Proposition \ref{prop1}. Building on this counter assumption, we extract a large subset of $B$ with additional structure. This information is used to show that the projections of the subset must have fairly large dimension.

\subsection{A Marstrand type theorem for self-similar sets}

For self-similar sets in $\R^{3}$ without rotations, we are able to obtain some optimal results, including the part of Conjecture \ref{dimensionConservation} concerning the projections $\rho_{\theta}$. The reason is that such sets $K$ enjoy the following structural property. If $\pi \colon \R^{3} \to V$ is the orthogonal projection onto any plane $V \subset \R^{3}$ and $\varepsilon > 0$, there exists a compact subset $\tilde{K} \subset K$ with $\Hd \tilde{K} \geq \Hd \pi(K) - \varepsilon$ such that the restriction $\pi|_{\tilde{K}}$ is bi-Lipschitz. Our second main result is the following:
\begin{thm}\label{main2} Let $K \subset \R^{3}$ be a self-similar set without rotations.
\begin{itemize}
\item[(a)] If $0 \leq \Hd K \leq 1$, then $\Hd \rho_{\theta}(K) = \Hd K$ almost surely.
\item[(b)] If $\Hd K > 1$, then $\rho_{\theta}(K)$ has positive length almost surely.
\end{itemize}
\end{thm}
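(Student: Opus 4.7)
The starting point is the observation that since $K$ is self-similar without rotations, generated by some IFS $\{f_i(x) = r_i x + b_i\}_{i \in I}$, the projection $\rho_\theta(K)$ is itself a self-similar set on $\R$, namely the attractor of $\{g_i^\theta(t) = r_i t + \gamma(\theta) \cdot b_i\}_{i \in I}$. The contraction ratios, and hence the similarity dimension $s$, are independent of $\theta$; only the translations $\gamma(\theta) \cdot b_i$ depend on it.

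For part (a), my plan is to invoke Hochman's theorem on self-similar sets in $\R$: in the absence of exact overlaps in the projected IFS, $\Hd \rho_\theta(K) = \min(s,1) = s = \Hd K$. After first collapsing any exact overlaps already present in the original three-dimensional IFS, an exact overlap in $\{g_i^\theta\}_{i \in I}$ reduces to a linear relation $\gamma(\theta) \cdot (b_w - b_{w'}) = 0$ for distinct finite words $w, w'$, with $b_w - b_{w'} \neq 0$. The non-degeneracy condition \eqref{eq:curve_cond} prevents $\theta \mapsto \gamma(\theta) \cdot v$ from vanishing together with both its first two derivatives for any nonzero $v \in \R^{3}$, so its zero set is a discrete subset of $U$. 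Taking a countable union over the countable collection of pairs $(w,w')$, the exceptional set of $\theta$ is Lebesgue-null, and part (a) follows.

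For part (b), my plan is to replace dimension by Lebesgue mass. I would pick a self-similar Bernoulli measure $\mu$ on $K$ with $\Hd \mu > 1$, which is possible since $\Hd K > 1$; then $\rho_{\theta *} \mu$ is a self-similar measure on $\R$ whose similarity dimension still exceeds $1$, and showing $\rho_{\theta *} \mu \ll \mathcal{L}^{1}$ for almost every $\theta$ would imply $\mathcal{L}^{1}(\rho_\theta(K)) > 0$. The bi-Lipschitz structural property highlighted in the excerpt is the bridge: it provides a subset $\tilde K_\theta \subset K$ of Hausdorff dimension close to $\Hd \rho_\theta(K)$ on which $\rho_\theta$ acts bi-Lipschitz, so that the usual loss of mass under a projection can be controlled quantitatively in terms of the $\cH^{1}$-mass of $\tilde K_\theta$. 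Combining this with a Plancherel/transversality estimate along the family---using $\widehat{\rho_{\theta *} \mu}(\xi) = \widehat{\mu}(\xi \gamma(\theta))$ and the $\mathcal{C}^{3}$-curvature of $\gamma$ encoded by \eqref{eq:curve_cond} to spread $|\widehat{\mu}|^{2}$ over a $\mathcal{C}^{2}$-surface in $\R^{3}$---one aims to bound $\int_{U} \int_{\R} |\widehat{\rho_{\theta *} \mu}(\xi)|^{2} \, d\xi \, d\theta < \infty$ and conclude via Fubini.

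The main obstacle will be completing this Fourier/transversality estimate in part (b). The one-parameter family $(\rho_\theta)_{\theta \in U}$ is substantially thinner than the full two-parameter Grassmannian of lines through the origin in $\R^{3}$ for which the classical Marstrand/Kaufman spherical averages operate, so the curvature of $\gamma$ alone will not suffice. The self-similarity of $K$---fed in at every scale via the bi-Lipschitz extraction---together with that curvature, must jointly compensate for the missing parameter; executing this rigorously is the genuinely new content of the theorem.
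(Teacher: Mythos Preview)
For part (a), your Hochman-based route is acknowledged by the paper as a valid alternative, but you have misstated Hochman's theorem: dimension drop implies \emph{super-exponential concentration} of cylinder separations, not merely exact overlaps, and ruling out the former for a.e.\ $\theta$ requires a quantitative estimate (which the non-degeneracy condition can supply, since $\theta\mapsto\gamma(\theta)\cdot v$ has at most second-order zeros---but you have not carried this out). As written you are invoking the still-open Exact Overlaps Conjecture. The paper's own proof of (a) is different and runs in parallel with (b), via the BLP property and Peres--Schlag transversality, using Proposition~\ref{prop1}(b) for the relevant plane projections.

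Part (b) has a genuine gap: your Fourier/Plancherel plan is a dead end, as you already suspect, and the paper's argument is entirely different. The idea you are missing is that the bi-Lipschitz extraction is applied not to the line projections $\rho_\theta$, but to projections onto the planes $b_\theta^\perp$ orthogonal to the \emph{bad lines} $b_\theta = \spa(\gamma(\theta)\times\dot\gamma(\theta))$. Since the family $(b_\theta)_\theta$ is itself non-degenerate (Lemma~\ref{etaNonDeg}) and the plane projections of $K$ are again self-similar (so $\Pd = \Hd$ for them), Theorem~\ref{main1}(b) yields $\Hd\pi_{b_\theta^\perp}(K) > 1$ for a.e.\ $\theta$. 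The BLP property then extracts, for a.e.\ $\theta_0$, a subset $\tilde K\subset K$ with $\Hd\tilde K > 1$ on which the projection onto $b_{\theta_0}^\perp$ is bi-Lipschitz---equivalently, $\tilde K$ stays non-tangentially off $b_{\theta_0}$. This makes $(\rho_\theta|_{\tilde K})$ transversal in the Peres--Schlag sense on a neighbourhood of $\theta_0$, and \cite[Theorem~2.8]{PSc} gives $\mathcal{L}^1(\rho_\theta(\tilde K)) > 0$ for a.e.\ $\theta$ there. Thus Theorem~\ref{main1}(b) is an essential input, and self-similarity enters only to equate $\Pd$ with $\Hd$ for the plane projections and to furnish the BLP extraction; no Fourier analysis of $\rho_{\theta\sharp}\mu$ is performed.
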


The phrase 'self-similar set without rotations' means that the generating similitudes of $K$ have the form $\psi(x) = rx + w$ for some $r \in (0,1)$ and $w \in \R^{3}$. The case of self-similar sets with only 'rational' rotations easily reduces to the one with no rotations, but we are not able to prove Theorem \ref{main2} for arbitrary self-similar sets in $\R^{3}$. Part (a) of Theorem \ref{main2} follows from recent work of M. Hochman \cite{Ho}, but the methods of proof are very different. Part (b) is new. In addition to the structural property of self-similar sets mentioned above, the proof of part (b) relies on an application of Theorem \ref{main1}(b). Unfortunately, the structural property can completely fail for general sets $B \subset \R^{3}$, see Remark \ref{noStructure}, so Theorem \ref{main2} does not seem to admit further generalisation with our techniques.

Specialising Theorem \ref{main2} to the family of lines foliating the surface of a vertical cone in $\R^{3}$, one immediately obtains the following corollary:
\begin{cor}\label{cor1} Let $K \subset \R$ be an equicontractive self-similar set with $\Hd K > 1/3$. Then
\begin{displaymath} (\cos \theta) \cdot K + (\sin \theta) \cdot K + K \end{displaymath}
has positive length for almost all $\theta \in [0,1]$.
\end{cor}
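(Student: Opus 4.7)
The plan is to realize the set $(\cos\theta) K + (\sin\theta) K + K$ as the image, up to a harmless scalar factor, of the product $K^{3} := K \times K \times K \subset \R^{3}$ under a non-degenerate one-parameter family of projections onto lines, and then to invoke Theorem \ref{main2}(b).

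The first step is to verify that $K^{3}$ is a self-similar set in $\R^{3}$ without rotations. If $K \subset \R$ is the attractor of the equicontractive IFS $\{\psi_{i}(x) = rx + w_{i}\}_{i=1}^{m}$, then $K^{3}$ is the attractor of the product IFS consisting of the $m^{3}$ maps $(x,y,z) \mapsto r(x,y,z) + (w_{i},w_{j},w_{k})$, all of which are similitudes of $\R^{3}$ with the common ratio $r$ and trivial orthogonal part. Moreover, by the Marstrand product inequality $\Hd(A \times B) \geq \Hd A + \Hd B$ for Borel sets, applied twice,
\begin{displaymath} \Hd K^{3} \geq 3 \Hd K > 1, \end{displaymath}
using the standing assumption $\Hd K > 1/3$.

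Next, I would introduce the curve $\gamma \colon U \to S^{2}$ parameterizing a vertical cone,
\begin{displaymath} \gamma(\theta) = \tfrac{1}{\sqrt{2}}(\cos\theta, \sin\theta, 1), \end{displaymath}
defined on any open interval $U \supset [0,1]$. A direct computation shows that the determinant of the matrix with rows $\gamma(\theta), \dot{\gamma}(\theta), \ddot{\gamma}(\theta)$ equals the constant $1/(2\sqrt{2}) \neq 0$, so $(\ell_{\theta})_{\theta \in U}$ is non-degenerate in the sense of Definition \ref{d:non-deg}. The corresponding projection is
\begin{displaymath} \rho_{\theta}(x,y,z) = \gamma(\theta) \cdot (x,y,z) = \tfrac{1}{\sqrt{2}}\bigl((\cos\theta)x + (\sin\theta)y + z\bigr), \end{displaymath}
so that $\rho_{\theta}(K^{3}) = \tfrac{1}{\sqrt{2}}\bigl[(\cos\theta) K + (\sin\theta) K + K\bigr]$.

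Applying Theorem \ref{main2}(b) to $K^{3}$ and the family $(\rho_{\theta})_{\theta \in U}$ yields that $\rho_{\theta}(K^{3})$ has positive length for almost every $\theta \in U$, hence in particular for almost every $\theta \in [0,1]$. Since positive length is preserved by the scaling factor $\sqrt{2}$, the Minkowski sum in the statement has positive length for almost every such $\theta$. There is no genuine obstacle: the content of the corollary is already contained in Theorem \ref{main2}(b), and the only work is to package $K^{3}$ and the cone parameterization correctly. The one minor care point is ensuring the product IFS has trivial orthogonal part, which is why one restricts to the case where $K$ is generated by translations of a fixed contraction; if reflections are allowed in the generators of $K$, the resulting rotations on $\R^{3}$ are of order two and fall under the `rational rotation' case which, as the paper notes, reduces to the no-rotation case.
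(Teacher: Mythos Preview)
Your proof is correct and follows exactly the same route as the paper: realise $(\cos\theta)K+(\sin\theta)K+K$ as $\rho_{\theta}(K^{3})$ for the cone curve $\gamma(\theta)=\tfrac{1}{\sqrt{2}}(\cos\theta,\sin\theta,1)$, observe that the equicontractivity makes $K^{3}$ a self-similar set without rotations with $\Hd K^{3}\geq 3\Hd K>1$, and apply Theorem~\ref{main2}(b). The paper's argument is the same, only stated more tersely; your added verifications (the determinant computation for non-degeneracy and the product inequality for the dimension bound) are the details the paper leaves implicit.
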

A self-similar set is called \emph{equicontractive}, if all the generating similitudes have equal contraction ratios. The proof can be found in Section \ref{furtherResults}. The corollary is close akin to Theorem 1.1(b), in Y. Peres and B. Solomyak's paper \cite{PS} with the choice $C_{\lambda} = (\cos \lambda) \cdot K + (\sin \lambda) \cdot K$, in the notation of \cite{PS}. The self-similar sets $C_{\lambda}$ treated in in \cite{PS} are not as specific in form as the ones in Corollary \ref{cor1}. On the other hand, the proof in \cite{PS} is based on the concept of \emph{transversality} and, hence, requires that the sets $C_{\lambda}$ satisfy the strong separation condition for all $\lambda \in (0,1)$. This is generally not the case with the sets $C_{\lambda} = (\cos \lambda) \cdot K + (\sin \lambda) \cdot K$ above.

\subsubsection{Notation} Throughout the paper we will write $a \lesssim b$, if $a \leq Cb$ for some constant $C \geq 1$. The two-sided inequality $a \lesssim b \lesssim a$, meaning $a \leq C_{1}b \leq C_{2}a$, is abbreviated to $a \sim b$. Should we wish to emphasise that the implicit constants depend on a parameter $p$, we will write $a \lesssim_{p} b$ and $a \sim_{p} b$. The closed ball in $\R^{d}$ with centre $x$ and radius $r > 0$ will be denoted by $B(x,r)$.

\section{Acknowledgements}

We are grateful to Spyridon Dendrinos, Esa and Maarit J\"arvenp\"a\"a, Pertti Mattila and David Preiss for stimulating discussions and encouragement. We also wish to thank Esa and Maarit J\"arvenp\"a\"a for their hospitality during our visit at the University of Oulu. Finally, we are indebted to an anonymous referee for many comments, which certainly increased the accessibility of the paper.

\section{General sets}

In this section, we prove Proposition \ref{prop1} and Theorem \ref{main1}. It suffices to prove all `almost sure' statements for any fixed compact subinterval $I$ of the parameter set $U$. For the rest of the paper, we assume that this interval is $I = [0,1]$.

\subsection{Classical bounds}\label{s:1/2}

As we mentioned in the introduction, the Hausdorff dimension of an analytic set $B \subset \mathbb{R}^d$ with $\Hd B \leq k$ is preserved under almost every projection onto an $k$-dimensional subspace in $\mathbb{R}^n$. This was proved by J. M. Marstrand \cite{Mar} in 1954 for $d=2$ and $k=1$ and by P. Mattila for general $k < d$ in 1975. In 1968, R.\ Kaufman \cite{Ka} found a "potential theoretic" proof for Marstrand's result, using integral averages over energies of projected measures. It is a natural point of departure for our studies to see if Kaufman's method could be used to prove dimension conservation for non-degenerate families of projections onto lines and planes in $\R^{3}$. For appropriate ranges of $\Hd B$ -- namely when $\Hd B$ is small enough -- the answer is positive. This is the content of Proposition \ref{prop1}.

\begin{proof}[Proof of Proposition \ref{prop1}]

We discuss first part (a) of the proposition, which concerns projections onto lines. Since orthogonal projections are Lipschitz continuous, the upper bound $\Hd \rho_{\theta}(B) \leq \Hd B$ holds for every parameter $\theta$. To establish the almost sure lower bound $\Hd\rho_{\theta}(B) \geq \Hd B$, we use the standard potential-theoretic method. Let $t<\Hd B \leq 1/2$ and find a positive and finite Borel regular measure $\mu$ which is supported on $B$ and whose $t$-energy is finite,
\begin{displaymath}
I_t(\mu):=\int_{B} \int_{B} |x-y|^{-t}d\mu(x)d\mu(y)<\infty.
\end{displaymath}
Such a measure exists by Frostman's lemma for analytic sets, see \cite{Ca}. For each $\theta\in [0,1]$, the push-forward measure $\mu_{\theta} = \rho_{\theta\sharp}\mu$ defined by $\mu_{\theta}(E):=\mu(\rho_{\theta}^{-1}(E))$ is a measure supported on $\rho_{\theta}(B)$. Our goal is to prove that $\int_0^1 I_t(\mu_{\theta}) d\theta<\infty$, which implies $I_t(\mu_{\theta})<\infty$ and thus $\Hd \rho_{\theta}(B) \geq t$
for almost every $\theta\in [0,1]$. Using Fubini's theorem, we find
\begin{align*}
\int_0^{1} I_t(\mu_{\theta})d\theta
&=\int_0^{1} \int_{\rho_{\theta}(B)} \int_{\rho_{\theta}(B)} |u-v|^{-t}d\mu_{\theta}(u)d\mu_{\theta}(v) \, d\theta \\
&= \int_B \int_B \left(\int_0^{1} |\rho_{\theta}(x-y)|^{-t}d\theta \right)d\mu(x)d\mu(y)\\
%&= \int_B \int_B  |x-y|^{-t}\left(\int_0^{1} |\rho_{\theta}(\tfrac{x-y}{|x-y|})|^{-t}d\theta \right)d\mu(x)d\mu(y)\\
&\lesssim \int_B \int_B  |x-y|^{-t}d\mu(x)d\mu(y) =I_t(\mu).
\end{align*}
The inequality follows from the next lemma combined with the linearity of $\rho_{\theta}$.

\begin{lemma}\label{l:int_univ_bd}
Given $t<1/2$, the estimate
\begin{displaymath}
\int_0^{1} |\rho_{\theta}(x)|^{-t} \, d\theta\lesssim_{t} 1
\end{displaymath}
holds for all $x\in S^2$.
\end{lemma}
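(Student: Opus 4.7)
The plan is to reduce the integral to a sublevel-set estimate via the layer-cake formula. Writing $g_{x}(\theta) := \rho_{\theta}(x) = \gamma(\theta)\cdot x$, the identity
\begin{displaymath}
\int_{0}^{1}|g_{x}(\theta)|^{-t}\,d\theta \;=\; t\int_{0}^{\infty}\varepsilon^{-t-1}\,|\{\theta\in[0,1]:|g_{x}(\theta)|<\varepsilon\}|\,d\varepsilon
\end{displaymath}
reduces the statement to showing that $|\{\theta\in[0,1]:|g_{x}(\theta)|<\varepsilon\}|\lesssim \min\{1,\varepsilon^{1/2}\}$ uniformly in $x\in S^{2}$; the remaining $\varepsilon$-integral then converges precisely when $t<1/2$.

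The core observation is that the non-degeneracy condition \eqref{eq:curve_cond} gives a uniform lower bound on the three quantities $|g_{x}|$, $|g'_{x}|$, $|g''_{x}|$. Indeed $g'_{x}(\theta)=\dot\gamma(\theta)\cdot x$ and $g''_{x}(\theta)=\ddot\gamma(\theta)\cdot x$, so if $A(\theta)$ denotes the $3\times 3$ matrix with rows $\gamma(\theta),\dot\gamma(\theta),\ddot\gamma(\theta)$, then $(g_{x}(\theta),g'_{x}(\theta),g''_{x}(\theta))^{T}=A(\theta)x$. By \eqref{eq:curve_cond}, $\det A(\theta)\neq 0$ for every $\theta\in U$, and continuity on the compact interval $[0,1]$ yields a constant $c>0$ such that
\begin{displaymath}
\max\{|g_{x}(\theta)|,|g'_{x}(\theta)|,|g''_{x}(\theta)|\}\geq c \qquad \text{for all } \theta\in[0,1],\ x\in S^{2}.
\end{displaymath}

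With this uniform bound in hand, I would derive the sublevel-set estimate as follows. Fix $\varepsilon<c/4$. At any $\theta$ with $|g_{x}(\theta)|<\varepsilon$ one must have $\max\{|g'_{x}(\theta)|,|g''_{x}(\theta)|\}\geq c/2$. Since $g'_{x}$ and $g''_{x}$ are continuous, one can cover $[0,1]$ by a bounded (depending only on $\gamma$) collection of closed subintervals on each of which either $|g'_{x}|\geq c/4$ throughout or $|g''_{x}|\geq c/4$ throughout. On an interval of the first type, $g_{x}$ is strictly monotone and $|\{|g_{x}|<\varepsilon\}|\lesssim \varepsilon$; on an interval of the second type, a standard convexity/Taylor argument gives $|\{|g_{x}|<\varepsilon\}|\lesssim \varepsilon^{1/2}$. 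Summing the contributions yields $|\{|g_{x}|<\varepsilon\}|\lesssim \varepsilon^{1/2}$ as desired.

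The main obstacle is really just the uniformity in $x\in S^{2}$ of the sublevel estimate; everything else is elementary. The uniformity is handled cleanly by the matrix reformulation above, which converts the pointwise condition \eqref{eq:curve_cond} into a quantitative lower bound via continuity on $[0,1]$. No delicate stationary-phase-type analysis is needed, because the threshold $t<1/2$ is exactly matched by the quadratic (second-derivative) sublevel estimate.
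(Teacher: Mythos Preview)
Your approach is correct and matches the paper's: both reduce to the sublevel estimate $|\{\theta:|g_x(\theta)|<\varepsilon\}|\lesssim\varepsilon^{1/2}$ via layer-cake, both derive this from the uniform bound $\max\{|g_x|,|g_x'|,|g_x''|\}\geq c$ on $[0,1]\times S^2$ (your matrix formulation with $A(\theta)x$ is a clean way to get this; the paper argues by compactness), and both finish with the van der Corput-type sublevel inequalities for first and second derivatives. One small imprecision: you cannot literally cover all of $[0,1]$ by intervals on which $|g_x'|\geq c/4$ or $|g_x''|\geq c/4$, since both derivatives may be small at points where $|g_x|$ itself is large --- the fix is to add a third type of interval where $|g_x|\geq c/4$ (contributing nothing to the sublevel set), and the uniform-in-$x$ bound on the number of intervals then follows from the uniform continuity of $\gamma,\dot\gamma,\ddot\gamma$ on $[0,1]$; the paper obtains the analogous uniformity via Lemma~\ref{l:no_zeros} together with a compactness argument on $S^2$.
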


\begin{proof}[Proof of Lemma \ref{l:int_univ_bd}]
We consider the function
\begin{displaymath}
\Pi:[0,1] \times S^2\to \mathbb{R}, \quad \Pi(\theta,x):=\rho_{\theta}(x)= x \cdot \gamma(\theta).
\end{displaymath}
To prove the lemma, we fix $x \in S^{2}$ and study the behaviour of $\theta\mapsto \Pi(\theta,x)$. We note that
\begin{displaymath}
{\partial_{\theta} \Pi} (\theta,x_{0}) = x \cdot \dot{\gamma}(\theta)\quad \text{and}\quad{\partial^2_{\theta} \Pi} (\theta,x) = x \cdot \ddot{\gamma}(\theta).
\end{displaymath}
If $\Pi(\theta,x)={\partial_{\theta} \Pi}(\theta,x)=0$ for some $x\in S^2$ and $\theta\in [0,1]$, we infer that $x$ is orthogonal to both $\gamma(\theta)$ and $\dot{\gamma}(\theta)$. If this happens, the second derivative $\partial^{2}_{\theta} \Pi(\theta,x)$ cannot vanish, because then $x$ would be orthogonal to $\ddot{\gamma}(\theta)$ as well, and this is ruled out by the non-degeneracy condition \eqref{eq:curve_cond}. We have now shown that
\begin{displaymath}
\Pi(\theta,x)={\partial_{\theta} \Pi}(\theta,x)=0\quad \Rightarrow \quad{\partial^2_{\theta} \Pi}(\theta,x)\neq 0
\end{displaymath}
for $(\theta,x)\in [0,1] \times S^2$. A compactness argument then yields a constant $c>0$ such that
\begin{equation}\label{eq:c_funct}
\max\{ |\Pi(\theta,x)|, |\partial_{\theta}\Pi(\theta,x)|,|\partial^2_{\theta}\Pi(\theta,x)|\} \geq c, \quad (\theta,x)\in [0,1]\times S^2.
\end{equation}

Since
\begin{align*}
\int_{0}^1|\Pi(\theta,x)|^{-t} \, d\theta&= \int_0^{\infty}\mathcal{L}^1(\{\theta\in [0,1]:\; |\Pi(\theta,x)|\leq r^{-\frac{1}{t}}\}) \, dr,
\end{align*}
we will need a uniform estimate for the $\mathcal{L}^{1}$ measures of the sub-level sets of $\theta \mapsto |\Pi(\theta,x)|$. Such an estimate is provided for instance by \cite[Lemma 3.3]{Ch}: for every $k\in\mathbb{N}$, there is a constant $C_k<\infty$ so that for every interval $I\subset \R$, every $f\in\mathcal{C}^k(I)$ and every $\lambda>0$,
\begin{equation}\label{eq:sublevel} \mathcal{L}^1(\{\theta\in I:\; |f(\theta)|\leq \lambda\})\leq C_k\left(\frac{\lambda}{\inf_{\theta \in I}|\partial^k_{\theta}f(\theta)|}\right)^{\frac{1}{k}}. \end{equation}
The next lemma,  proved in Appendix \ref{appendix_b}, shows that the mapping $\theta \mapsto \Pi(\theta,x)$ can only have finitely many zeros on $[0,1]$.

\begin{lemma}\label{l:no_zeros}
Let $\gamma:[0,1]\to S^2$ be a parameterised curve satisfying the condition \eqref{eq:curve_cond}. Then there exists $\varepsilon>0$ such that for all $x\in S^2$ the function $\theta \to \rho_{\theta}(x)$ vanishes in at most two points in every interval of length $\varepsilon$.
\end{lemma}

Recall that in each of the finitely many points $\theta_{0} \in [0,1]$, where $\Pi(\theta_{0},x) = 0$, either ${\partial_{\theta} \Pi}(\theta_{0},x)\neq 0$ or ${\partial ^2_{\theta} \Pi}(\theta_{0},x)\neq 0$. Now, the uniform continuity of $\Pi$ and its partial derivatives guarantee that there exists an open ball $U(x)$ centred at $x$ with the following property: $[0,1]$ can be covered by a finite number of intervals $I_1,\ldots,I_{n(x)}$, for each of which there are numbers $k_i \in \{0,1,2\}$ and $c_i>0$ with
\begin{equation}\label{eq:lower_bound}
\inf_{\theta \in I_i} |\partial_{\theta}^{k_i} \Pi(y,\theta)|\geq c_i\quad\text{for all }y\in U(x).
\end{equation}
Set $c_0:= \min\{c_1,\ldots,c_{n(x)}\}$.
The sub-level set estimate \eqref{eq:sublevel} applied to this situation yields
\begin{displaymath}
\mathcal{L}^1(\{\theta\in I_i:\; |\Pi(\theta,y)|\leq \lambda\})\leq \left\{\begin{array}{ll}0 &\text{if }k_i =0,\lambda\leq c_0\\ C_{k_i}\left(\frac{\lambda}{c_i}\right)^{\frac{1}{k_i}}&\text{if }k_i\in \{1,2\}\end{array}\right.
\end{displaymath}
for all $i\in \{1,\ldots,n(x)\}$ and $y\in U(x)$. Hence, there exists a finite constant $c(x)>0$ such that
\begin{displaymath}
\mathcal{L}^1(\{\theta\in [0,1]:\; |\Pi(\theta,y)|\leq \lambda\})\leq c(x)\lambda^{\frac{1}{2}},\quad\text{for all }y\in U(x) \text{ and } 0<\lambda \leq 1.
\end{displaymath}
The sphere $S^2$ can be covered by finitely many balls of the form $U(x)$, so
\begin{equation}\label{eq:sublevel2}
\mathcal{L}^1(\{\theta\in [0,1]:\; |\Pi(\theta,x)|\leq \lambda\})\lesssim \lambda^{\frac{1}{2}},\quad\text{for all }x\in S^2 \text{ and }0<\lambda \leq 1.
\end{equation}
Finally, we obtain
\begin{align*}
\int_{0}^1|\Pi(\theta,x)|^{-t} \, d\theta&= \int_0^{\infty}\mathcal{L}^1(\{\theta\in [0,1]:\; |\Pi(\theta,x)|\leq r^{-\frac{1}{t}}\}) \, dr\lesssim 1 + \int_1^{\infty}r^{-\frac{1}{2 t}} \, dr,
\end{align*}
where the right hand side is finite by the assumption $t < 1/2$.
\end{proof}

It remains to prove part (b) of the proposition. The result follows directly from \cite[Proposition 3.2]{JJLL} or \cite[Theorem 3.2]{JJK}, since the family $(\pi_{\theta})_{\theta \in U}$ is `full' or `non-degenerate' in the senses of \cite{JJLL} and \cite{JJK}. We will need the sub-level estimate \eqref{eq:sub_level_plane} later, so we choose to include the proof.  As in (a), we are reduced to proving the uniform bound
\begin{equation}\label{eq:goal_plan}
\int_0^1|\pi_{\theta}(x)|^{-t} \, d\theta= \int_0^{\infty}\mathcal{L}^1(\{\theta\in [0,1]:\; |\pi_{\theta}(x)|^2\leq r^{-\frac{2}{t}}\}) \, dr\lesssim_{t} 1,\quad x\in S^2,
\end{equation}
valid for $0 < t < 1$. To see this, we note that the function
\begin{displaymath}
F:[0,1]\times S^2 \to \mathbb{R},\quad F(\theta,x):= |\pi_{\theta}(x)|^2 = d(x,\ell_{\theta})^2 = 1-\rho_{\theta}(x)^2
\end{displaymath}
can have at most second order zeros. Indeed, the formulae
\begin{displaymath}
F(\theta,x)=1- (x\cdot \gamma(\theta))^2,\quad \partial_{\theta}F(\theta,x)=- 2(x\cdot \gamma(\theta))(x\cdot \dot{\gamma}(\theta)),\end{displaymath}
\begin{equation}\label{secondDerivative} \partial_{\theta}^2 F(\theta,x)=-2 (x\cdot \dot{\gamma}(\theta))^2-2(x\cdot \gamma(\theta))(x\cdot \ddot{\gamma}(\theta))
\end{equation}
reveal that if $F(\theta,x)=0$ for some $\theta \in [0,1]$ and $x\in S^2$, then $x$ is parallel to $\gamma(\theta)$. This implies that $x\cdot \dot{\gamma}(\theta)=0$ and $x\cdot \ddot{\gamma}(\theta)\neq 0$ since
\begin{displaymath}
\gamma\cdot\gamma=1\quad \Rightarrow \quad\gamma \cdot \dot{\gamma}=0\quad \Rightarrow \quad \gamma \cdot \ddot{\gamma}\neq 0.
\end{displaymath}
This means that $\partial_{\theta}^2 F(\theta,x)\neq 0$, by \eqref{secondDerivative}. Now, tracing the proof of Lemma \ref{l:no_zeros}, we conclude that the number of zeros of the function $\theta \mapsto F(\theta,x)$ is finite for $x\in S^2$. Since
 $|F(\theta,x)|=|\pi_{\theta}(x)|^2$,
 the sub-level estimate \eqref{eq:sublevel} and the compactness of $S^2$ yield
\begin{equation}\label{eq:sub_level_plane} \mathcal{L}^1 (\{\theta \in [0,1]:\; |\pi_{\theta}(x)| \leq \lambda\})\lesssim \lambda, \quad x\in S^2.
\end{equation}
This proves \eqref{eq:goal_plan} for $0 < t < 1$.
\end{proof}

\subsection{Beyond the classical bounds}

It can be read from the proof of Proposition \ref{prop1} above, why the potential theoretic method does not directly extend beyond the dimension ranges $0 \leq \Hd B \leq 1/2$ (for lines) and $0 \leq \Hd B \leq 1$ (for planes). If $x,y \in \R^{3}$ are points such that $(x - y) \perp \gamma(\theta_{0})$ and $(x - y) \perp \dot{\gamma}(\theta_{0})$ for some $\theta_{0} \in (0,1)$, then both the mapping $\theta \mapsto \rho_{\theta}(x - y)$ and its first derivative have a zero at $\theta = \theta_{0}$. This means that
\begin{equation}\label{potentialFailure} \int_{0}^{1} \frac{d\theta}{|\rho_{\theta}(x - y)|^{t}} = \infty \end{equation}
for any $t > 1/2$. Now, if the whole set $B \subset \R^{3}$ is contained on the line perpendicular to $\gamma(\theta_{0})$ and $\dot{\gamma}(\theta_{0})$, then all the differences $x - y$, $x,y \in B$ enjoy the same property. Thus,
\begin{displaymath} \int_{0}^{1} I_{t}(\rho_{\theta\sharp}\mu) \, d\theta = \infty \end{displaymath}
for any $t > 1/2$ and for any Borel measure $\mu$ supported on $B$.

For the projections $\pi_{\theta}$, the situation is not so clear-cut. Again, the direct potential theoretic approach fails, because if $x - y \in \ell_{\theta_{0}} = V_{\theta_{0}}^{\perp}$ for some $\theta_{0} \in (0,1)$, then \eqref{potentialFailure} holds for any $t > 1$, with $\rho_{\theta}$ replaced by $\pi_{\theta}$. But, this time, we do not know if one can construct a set $B \subset \R^{3}$ with $\Hd B > 1$ such that most of the differences $x -y$, $x,y \in B$, lie on the lines $\ell_{\theta}$, $\theta \in [0,1]$. Thus, for all we know, it is still possible that estimates of the form
\begin{equation}\label{hopeful} \int_{0}^{1} I_{s}(\pi_{\theta\sharp}\mu) \, d\theta \lesssim I_{t}(\mu) < \infty \end{equation}
hold for $1 < s < t$ and for suitable chosen measures $\mu$ supported on $B$.

\subsubsection{Proof of Theorem \ref{main1}: a sketch} Being unable to verify an estimate of the form \eqref{hopeful} -- and knowing its impossibility for projections onto lines -- our proof takes a different road. We will now give a heuristic outline of the argument used in the proof of Theorem \ref{main1}(b), before working out the details (the proof of Theorem \ref{main1}(a) is similar but slightly more technical).  We start with the counter assumption that the dimension of the projections $\pi_{\theta}(B)$ drops very close to one in positively many directions $\theta \in [0,1]$. Using this and the non-degeneracy condition, we find two short, disjoint, compact subintervals $I,J \subset [0,1]$ with the following properties:
\begin{itemize}
\item[(i)] The dimension of the projections $\pi_{\theta}(B)$ is very close to one for `almost all' parameters $\theta \in I \cup J$.
\item[(ii)] The surface
\begin{displaymath} C_{I} := \bigcup_{\theta \in I} \ell_{\theta} \end{displaymath}
is `directionally separated' from the lines $\ell_{\theta}$, $\theta \in J$, in the sense that if $x,y \in C_{I}$, then $x - y$ forms a large angle with any such line $\ell_{\theta}$.
\end{itemize}
The next step is to project the set $B$ onto the planes $V_{\theta}$, $\theta \in I$. Because of (i), we know that the projections $\pi_{\theta}$ are, on average, far from bi-Lipschitz. This implies the existence of many differences near the lines $V_{\theta}^{\perp} = \ell_{\theta}$, $\theta \in I$. Building on this information, we find a large subset $\tilde{B} \subset B$ lying entirely in a small neighbourhood of $C_{I}$. The closer the dimension of the projections $\pi_{\theta}(B)$ drops to one for $\theta \in I$, the larger we can choose $\tilde{B}$. Then, we recall (ii) and observe that the differences $x - y$ with $x,y \in \tilde{B}$ are directionally far from the lines $\ell_{\theta}$, $\theta \in J$ (at least if $|x - y|$ is large enough). This means, essentially, that the restrictions $\pi_{\theta}|_{\tilde{B}} \colon \tilde{B} \to \R^{2}$, $\theta \in J$, are bi-Lipschitz and shows that the dimension of $\pi_{\theta}(B)$ exceeds the dimension of $\tilde{B}$ for $\theta \in J$. If the dimension of $\tilde{B}$ was taken close enough to the dimension of $B$, we end up contradicting (i).

\subsection{Proof of Theorem \ref{main1}: the details}\label{ss:details_main1} We will not discuss the proof of Theorem \ref{main1}(a) informally, since the general outline resembles so closely the one in the proof of Theorem \ref{main1}(b). Our first aim is to reduce the proof of Theorem \ref{main1} to verifying a discrete statement, Theorem \ref{mainDiscrete}, which concerns sets and projections at a single scale $\delta > 0$. To this end, we need some definitions.
\begin{definition}[$(\delta,s)$-sets] Let $\delta,s > 0$, and let $P \subset \R^{3}$ be a finite $\delta$-separated set. We say that $P$ is a $(\delta,s)$-set, if it satisfies the estimate
\begin{displaymath} |P \cap B(x,r)| \lesssim \left(\frac{r}{\delta}\right)^{s}, \qquad x \in \R^{3},\: r \geq \delta. \end{displaymath}
Here $| \cdot |$ refers to cardinality, but, in the sequel, it will also be used to denote length in $\R$ and area in $\R^{2}$. This should cause no confusion, since for any set $A$ only one of the possible meanings of $|A|$ makes sense. 
\end{definition}

In a way to be quantified in the next lemma, $(\delta,s)$-sets are well-separated $\delta$-nets inside sets with positive $s$-dimensional Hausdorff content (denoted by $\cH_{\infty}^{s}$). This principle -- a discrete Frostman's lemma -- is most likely folklore, but we could not locate a reference for exactly the formulation we need. So, we choose to include a proof in Appendix \ref{appFrostman}.
\begin{lemma}[Frostman]\label{frostman} Let $\delta,s > 0$, and let $B \subset \R^{3}$ be any set with $\cH^{s}_{\infty}(B) =: \kappa > 0$. Then there exists a $(\delta,s)$-set $P \subset B$ with cardinality $|P| \gtrsim \kappa \cdot \delta^{-s}$.
\end{lemma}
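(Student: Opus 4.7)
The plan is to deduce this discrete Frostman lemma from a continuous Frostman lemma for Hausdorff content, combined with a weighted random selection at scale $\delta$. As a first step, I would apply a Frostman-type lemma valid for Hausdorff content (see, e.g., Howroyd's theorem, as surveyed in Mattila's monograph) to obtain a nonzero finite Borel measure $\mu$ supported on $B$ with $\mu(B(x,r)) \leq r^{s}$ for all $x \in \R^{3}$ and $r > 0$, and with total mass $\mu(\R^{3}) \gtrsim \kappa$.

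Next, I would set up Voronoi-type weights. Let $\{p_{i}\}$ be a maximal $\delta$-separated subset of $\spt \mu$, so that $\spt \mu \subset \bigcup_{i} B(p_{i},\delta)$, and let $\{V_{i}\}$ be a Borel partition of this union with $V_{i} \subset B(p_{i},\delta)$. Setting $w_{i} := \mu(V_{i})$, the Frostman bound forces $w_{i} \leq \delta^{s}$, while $\sum_{i} w_{i} \geq \mu(\spt \mu) \gtrsim \kappa$. Now form a random subset $P \subset \{p_{i}\}$ by including each $p_{i}$ independently with probability $q_{i} := w_{i}/\delta^{s} \in [0,1]$. A direct computation gives
\begin{displaymath} \E|P| = \sum_{i} q_{i} \gtrsim \kappa \delta^{-s}, \end{displaymath}
and, for every $x \in \R^{3}$ and $r \geq \delta$,
\begin{displaymath} \E|P \cap B(x,r)| \leq \delta^{-s} \mu(B(x, r + \delta)) \lesssim (r/\delta)^{s}, \end{displaymath}
where the last step uses the Frostman condition on $\mu$.

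The concluding step is to upgrade these in-expectation bounds to the existence of a single deterministic $P$ satisfying both conditions. Since $P \subset \{p_{i}\}$ is automatically $\delta$-separated, the desired ball-density bound is trivial for $r$ comparable to $\delta$. For larger $r$, I would apply a Chernoff-type concentration inequality to each random variable $|P \cap B(x,r)|$ and take a union bound over a discrete net of parameters $(x,r)$ (say, $x$ on a $\delta$-grid and $r$ dyadic). A matching concentration estimate controls $|P|$ from below. With the constants tuned correctly, a single realization of $P$ will satisfy both $|P| \gtrsim \kappa \delta^{-s}$ and $|P \cap B(x,r)| \lesssim (r/\delta)^{s}$, which is precisely a $(\delta, s)$-set of the required cardinality.

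The main technical obstacle is the concentration step. A more naive deterministic approach --- e.g., keeping only those $p_{i}$ with $w_{i} \geq c\delta^{s}$ --- fails without an a priori upper bound on the total size of the maximal net $\{p_{i}\}$, since one cannot rule out a large number of ``light'' $p_{i}$ whose individual weights are too small to contribute but whose count alone could ruin the ball-density bound. The randomized construction, by weighting points with $w_{i}$ itself, elegantly circumvents this: both expectations depend only on the linear quantities $\sum w_{i}$ and $\sum_{p_{i} \in B(x,r)} w_{i}$, which are directly controlled by the Frostman condition.
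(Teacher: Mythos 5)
Your reduction to a Frostman measure and the expectation computations are fine, but the concluding concentration step contains a genuine gap: the Chernoff--plus--union-bound argument cannot produce a \emph{uniform} constant in the $(\delta,s)$-condition. The problem is the intermediate range of radii. For $r = 2^{j}\delta$ with $1 \ll (r/\delta)^{s} \ll \log(1/\delta)$, the mean of $|P \cap B(x,r)|$ is of order $m := (r/\delta)^{s}$, and Chernoff only gives a failure probability of order $e^{-c_{C} m}$ for the event $|P \cap B(x,r)| > Cm$; since $m = o(\log(1/\delta))$, this is $\delta^{o(1)}$, while the net of centres at scale $r$ has cardinality which is a power of $1/\delta$. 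Making the union bound close therefore forces $C \gtrsim \log(1/\delta)/\log\log(1/\delta)$, i.e.\ a constant that blows up as $\delta \to 0$. The $\delta$-separation of the $p_{i}$ rescues only the scales $r \lesssim \delta$ (where it gives $|P\cap B(x,r)| = O(1)$), not this intermediate range; and this is not merely a lossy estimate --- in the worst case there really will be over-full balls at these scales with probability tending to $1$. The argument can be repaired, but only by changing its logic: instead of demanding that \emph{every} ball be good, one must post-process by deleting the points lying in over-full dyadic cubes and show that the \emph{expected total number of deletions} is $\ll \kappa\delta^{-s}$ (summing the expected excess over all cubes and all scales). That computation does close, though the resulting constant then depends on $\kappa$. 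A secondary, fixable issue: the lemma is stated for an arbitrary set $B$, whereas the measure-theoretic Frostman lemma requires $B$ closed or analytic; you would need to work with $\overline{B}$ and then perturb each selected point back into $B$ by at most $\delta/10$.

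For comparison, the paper's proof (Appendix \ref{appFrostman}) avoids both the measure and the randomness entirely. It picks one point of $B$ in each dyadic $\delta$-cube meeting $B$, then prunes deterministically from the bottom up: at each dyadic scale $2^{-k+j+1}$ it removes points from any cube $Q$ containing more than $(d(Q)/\delta)^{s}$ of them, stopping when the count lies in $[\tfrac{1}{2}(d(Q)/\delta)^{s}, (d(Q)/\delta)^{s}]$. The $(\delta,s)$-property is then built in by construction, and the cardinality bound follows because the maximal cubes where pruning last occurred form a cover of $B$ by sets $Q_{x}$ with $|P \cap Q_{x}| \gtrsim (d(Q_{x})/\delta)^{s}$, so that $|P| \gtrsim \delta^{-s}\sum d(Q_{x})^{s} \geq \kappa\,\delta^{-s}$ directly from the definition of $\cH^{s}_{\infty}$. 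In effect, the deterministic stopping-time construction does exactly the ``excess-removal'' that your randomized argument would be forced to perform anyway, but without any concentration input; I would recommend adopting that route.
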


As we have seen, the potential theoretic method cannot be used to improve Proposition \ref{prop1}, because the projections onto planes (resp. lines) may have first (resp. second) order zeros. Such zeros lie on certain `bad lines', the unions of which form `bad cones' in $\R^{3}$. Let us establish notation for these objects.
\begin{definition}[Cones spanned by curves on $S^{2}$] Let $\gamma \colon [0,1] \to S^{2}$ be a curve. If $I \subset [0,1]$ is a compact subinterval, we write
\begin{displaymath} C_{I}(\gamma):= \bigcup_{\theta \in I} \spa(\gamma(\theta)) \subset \R^{3}. \end{displaymath}
\end{definition}

Two special cases of this definition are of particular interest:
\begin{definition}[Bad lines and bad cones for projection families] Let $\gamma \colon U \to S^{2}$ be a non-degenerate curve as in Definition \ref{d:non-deg}, and let $\eta \colon U \to S^{2}$ be the curve
\begin{displaymath} \eta(\theta) := \frac{\gamma(\theta) \times \dot{\gamma}(\theta)}{|\gamma(\theta) \times \dot{\gamma}(\theta)|}. \end{displaymath}
\begin{itemize}
\item[(a)] A \emph{bad line} for the projection family $(\rho_{\theta})_{\theta \in U}$ is any line of the form
\begin{displaymath} b_{\theta} := \spa(\eta(\theta)) \subset \R^{3}, \quad \theta \in U. \end{displaymath}
Unions of bad lines form \emph{bad cones}, as in the previous definition: if $I \subset [0,1]$ is a compact subinterval, we write
\begin{displaymath} C_{I}^{\rho} := C_{I}(\eta). \end{displaymath}
\item[(b)] For the projection family $(\pi_{\theta})_{\theta \in U}$, the \emph{bad lines} have the form
\begin{displaymath} \ell_{\theta} = \spa(\gamma(\theta)). \end{displaymath}
We also define the \emph{bad cones }
\begin{displaymath} C_{I}^{\pi} := C_{I}(\gamma), \qquad I \subset [0,1]. \end{displaymath}
\end{itemize}
\end{definition}

The definitions of bad lines and cones for $(\rho_{\theta})_{\theta \in U}$ and $(\pi_{\theta})_{\theta \in U}$ are closely related with the zeros of the projections. For instance,
\begin{displaymath} x \in b_{\theta_{0}} \quad \Longleftrightarrow \quad x \perp \gamma(\theta_{0}) \text{ and }x \perp \dot{\gamma}(\theta_{0}), \end{displaymath}
where the right hand side is just another way of saying that
\begin{displaymath} \theta \mapsto \rho_{\theta}(x) = \gamma(\theta) \cdot x \quad \text{and} \quad \theta \mapsto \partial_{\theta} \rho_{\theta}(x) = \dot{\gamma}(\theta) \cdot x \end{displaymath}
vanish simultaneously at $\theta = \theta_{0}$. In particular, if $x \in b_{\theta_{0}}$, then
\begin{displaymath} \int_{\theta_{0} - \varepsilon}^{\theta_{0} + \varepsilon} \frac{d\theta}{|\rho_{\theta}(x)|^{t}} = \infty \end{displaymath}
for any $\varepsilon > 0$ and $t > 1/2$. For the projections $\pi_{\theta}$, the situation is even simpler: the mapping $\theta \mapsto |\pi_{\theta}(x)|$ has a (first order) zero at $\theta = \theta_{0}$, if and only if $x \in \ell_{\theta_{0}}$.

Now we can explain how the non-degeneracy hypothesis \eqref{eq:curve_cond} is used in the proof of Theorem \ref{main1}. It will ensure that if $I,J \subset [0,1]$ are appropriately chosen short intervals, then the bad cones $C_{I}^{\rho},C_{J}^{\rho}$ (in part (a)) or $C_{I}^{\pi},C_{J}^{\pi}$ (in part (b)) `point in essentially different directions'. This concept is captured by the next definition:
\begin{definition} Let $\gamma \colon [0,1] \to S^{2}$ be any curve, and let $I,J \subset [0,1]$ be disjoint compact subintervals. We write
\begin{displaymath} C_{I}(\gamma) \not\parallel C_{J}(\gamma), \end{displaymath}
if there is a constant $c = c(\gamma,I,J) > 0$ with the following property. If $x,y \in C_{I}(\gamma)$ and $\xi \in C_{J}(\gamma) \cap S^{2} = \{\gamma(\theta) : \theta \in J\}$, then
\begin{displaymath} \left|\frac{x - y}{|x - y|} - \xi\right| \geq c. \end{displaymath}
An equivalent way to state the condition is to say that there is a constant $L = L(\gamma,I,J) < 1$ such that every orthogonal projection from $C_{I}(\gamma)$ to a line on $C_{J}(\gamma)$ is $L$-Lipschitz. The next lemma shows how to find intervals $I,J \subset [0,1]$ such that $C_{I}(\gamma) \not\parallel C_{J}(\gamma)$.
\end{definition}
\begin{lemma}\label{CICJ} Given a $\mathcal{C}^2$ curve $\gamma:[0,1]\to S^2$ with nowhere vanishing tangent, suppose that $\theta_{1},\theta_{2} \in (0,1)$ are such that $\gamma(\theta_{2}) \notin \spa(\{\gamma(\theta_{1}),\dot{\gamma}(\theta_{1}\})$. Then there exist $\varepsilon_{1},\varepsilon_{2} > 0$ such that
\begin{displaymath} C_{I} \not\parallel C_{J} \end{displaymath}
with $I = [\theta_{1} - \varepsilon_{1}, \theta_{1} + \varepsilon_{1}]$, $J = [\theta_{2} - \varepsilon_{2}, \theta_{2} + \varepsilon_{2}]$, $C_{I} = C_{I}(\gamma)$ and $C_{J} = C_{J}(\gamma)$. \end{lemma}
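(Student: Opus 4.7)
The plan is a proof by contradiction combined with a rescaling/compactness argument. Geometrically, $C_I$ is a ruled surface whose tangent plane along the ruling $\spa(\gamma(\theta_1))$ is exactly $\spa(\gamma(\theta_1),\dot\gamma(\theta_1))$, so as $I$ contracts to $\{\theta_1\}$ every secant direction of $C_I$ should accumulate on this plane, while the hypothesis places $\gamma(\theta_2)$ at positive distance from it.

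To implement this, I would suppose the conclusion fails. Taking $\varepsilon_1=\varepsilon_2=1/n$, this produces sequences $x_n\neq y_n$ in $C_{I_n}$ and $\theta_n\in J_n$ with $u_n:=(x_n-y_n)/|x_n-y_n|$ within $1/n$ of $\gamma(\theta_n)$, hence $u_n\to\gamma(\theta_2)$ by continuity. I would then parametrise $x_n=s_n\gamma(\alpha_n)$ and $y_n=t_n\gamma(\beta_n)$ with $\alpha_n,\beta_n\in I_n$. Since $C_{I_n}$ is scale-invariant and $u_n$ is invariant under common positive rescaling of $x_n,y_n$, I may assume $\max(|s_n|,|t_n|)=1$ and, passing to a subsequence, $s_n\to s$, $t_n\to t$ with $\max(|s|,|t|)=1$.

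If $s\neq t$, the expansion $x_n-y_n=(s_n-t_n)\gamma(\alpha_n)+t_n(\gamma(\alpha_n)-\gamma(\beta_n))$ converges to $(s-t)\gamma(\theta_1)\neq 0$, forcing $u_n\to\pm\gamma(\theta_1)$, so $\gamma(\theta_2)\in\spa(\gamma(\theta_1))$, contradicting the hypothesis. If $s=t$, the mean value theorem gives
\begin{displaymath}
x_n-y_n=a_n\gamma(\alpha_n)+b_n\dot\gamma(\theta_n^{\ast}), \quad a_n:=s_n-t_n, \quad b_n:=t_n(\alpha_n-\beta_n),
\end{displaymath}
for some $\theta_n^{\ast}\in I_n$, with $(a_n,b_n)\neq(0,0)$. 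Rescaling by $r_n:=\max(|a_n|,|b_n|)$ and passing to a subsequence with $(a_n/r_n,b_n/r_n)\to(a,b)$, $\max(|a|,|b|)=1$, I obtain $(x_n-y_n)/r_n\to a\gamma(\theta_1)+b\dot\gamma(\theta_1)$. This limit is non-zero because $\gamma(\theta_1)\perp\dot\gamma(\theta_1)$ (from $|\gamma|\equiv1$) and $\dot\gamma(\theta_1)\neq 0$ (nowhere vanishing tangent); after normalising, $u_n$ converges to a unit vector in $\spa(\gamma(\theta_1),\dot\gamma(\theta_1))$, again placing $\gamma(\theta_2)$ in this plane, a contradiction.

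The only delicate step is the case $s=t$, where both points collapse to a common limit, $|x_n-y_n|\to 0$, and the limit direction of $u_n$ is only resolved after rescaling by the first-order length scale $r_n$. The key technical input making this work is the linear independence of $\gamma(\theta_1)$ and $\dot\gamma(\theta_1)$, which ensures $|x_n-y_n|\sim r_n$ and hence that the rescaled differences converge to a definite unit vector in $\spa(\gamma(\theta_1),\dot\gamma(\theta_1))$.
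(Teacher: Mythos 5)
Your argument is correct in substance and reaches the conclusion by a genuinely different route from the paper. The paper works with the equivalent Lipschitz formulation and proves it directly and quantitatively: writing $x = r_{x}\gamma(\theta_{x})$, $y = r_{y}\gamma(\theta_{y})$, it first reduces to the case $r_{x}, r_{y} \geq 0$ (handling opposite signs by a separate triangle-inequality step), then expands $x - y$ to first order in $\theta_{x} - \theta_{y}$, bounds $|[r_{x}\dot{\gamma}(\theta_{y})(\theta_{x}-\theta_{y}) + (r_{x}-r_{y})\gamma(\theta_{y})]\cdot\gamma(\theta)|$ using the hypothesis and continuity, and compares against an exact formula for $|x-y|$ to extract an explicit $L<1$. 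Your compactness-and-contradiction argument rests on the same geometric fact (secant directions of $C_{I}$ accumulate on $\spa(\{\gamma(\theta_{1}),\dot{\gamma}(\theta_{1})\})$ as $I$ shrinks), but the normalisation $\max(|s_{n}|,|t_{n}|)=1$ together with the dichotomy $s\neq t$ versus $s=t$ absorbs the sign case analysis, and the rescaling by $r_{n}=\max(|a_{n}|,|b_{n}|)$ cleanly resolves the degenerate limit. What you give up is any effective control on $\varepsilon_{1},\varepsilon_{2}$ or on the Lipschitz constant, which is harmless here since the lemma is purely qualitative.

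One technical slip should be repaired: the mean value theorem does not hold in the stated form for the vector-valued map $\gamma$; in general there is no single $\theta_{n}^{\ast}$ with $\gamma(\alpha_{n})-\gamma(\beta_{n})=(\alpha_{n}-\beta_{n})\dot{\gamma}(\theta_{n}^{\ast})$. Use instead the integral form $\gamma(\alpha_{n})-\gamma(\beta_{n})=(\alpha_{n}-\beta_{n})\int_{0}^{1}\dot{\gamma}(\beta_{n}+t(\alpha_{n}-\beta_{n}))\,dt=:(\alpha_{n}-\beta_{n})v_{n}$; since $\alpha_{n},\beta_{n}\to\theta_{1}$ and $\dot{\gamma}$ is continuous, $v_{n}\to\dot{\gamma}(\theta_{1})$, and your case $s=t$ goes through verbatim with $v_{n}$ in place of $\dot{\gamma}(\theta_{n}^{\ast})$. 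A second, cosmetic point: $C_{J}\cap S^{2}$ contains both $\pm\gamma(\theta)$, so strictly you only get $u_{n}\to\pm\gamma(\theta_{2})$ after passing to a subsequence; swapping $x_{n}$ and $y_{n}$ where needed fixes the sign, and either sign gives the same contradiction since the hypothesis excludes $\gamma(\theta_{2})$ from the whole plane and a fortiori from $\spa(\gamma(\theta_{1}))$.
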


This result is rather intuitive; a rigorous proof is given in Appendix \ref{appendix_b}. Now we are prepared to formulate a $\delta$-discretised version of Theorem \ref{main1}.

\begin{thm}\label{mainDiscrete}  Let $s > 0$, and let $P \subset B(0,1)$ be a $(\delta,s)$-set with cardinality $|P| \sim \delta^{-s}$. The following statements hold for $\delta > 0$ small enough.
\begin{itemize}
\item[(a)] Suppose that $I,J \subset [0,1]$ are intervals such that
\begin{displaymath} C_{I}^{\rho} \not\parallel C_{J}^{\rho}. \end{displaymath}
If $s > 1/2$, there exist $\varepsilon_{1} = \varepsilon_{1}(s) > 0$ and $\sigma_{1} = \sigma_{1}(s) > 1/2$ with the following property. Suppose that $E_{I} \subset I$ and $E_{J} \subset J$ have lengths $|E_{I}| \geq \delta^{\varepsilon_{1}}$ and $|E_{J}| \geq \delta^{\varepsilon_{1}}$. Then there exists a direction $\theta \in E_{I} \cup E_{J}$ such that
\begin{displaymath} |\rho_{\theta}(P(\delta))| \geq \delta^{1 - \sigma_{1}}. \end{displaymath}
\item[(b)] Suppose that $I,J \subset [0,1]$ are intervals such that
\begin{displaymath} C_{I}^{\pi} \not\parallel C_{J}^{\pi}. \end{displaymath}
If $s > 1$, there exist $\varepsilon_{2} = \varepsilon_{2}(s) > 0$ and $\sigma_{2} = \sigma_{2}(s) > 1$ with the following property. Suppose that $E_{I} \subset I$ and $E_{J} \subset J$ have lengths $|E_{I}| \geq \delta^{\varepsilon_{2}}$ and $|E_{J}| \geq \delta^{\varepsilon_{2}}$. Then there exists a direction $\theta \in E_{I} \cup E_{J}$ such that
\begin{displaymath} |\pi_{\theta}(P(\delta))| \geq \delta^{2 - \sigma_{2}}. \end{displaymath}
\end{itemize}
\end{thm}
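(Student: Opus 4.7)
The plan is to prove part (b) in detail; part (a) follows by the same skeleton with $\pi_\theta$, $C_I^\pi$ and the first-order sub-level bound \eqref{eq:sub_level_plane} replaced by $\rho_\theta$, $C_I^\rho$ and the second-order bound \eqref{eq:sublevel2}. I argue by contradiction, fixing $\sigma_2 > 1$ and assuming $|\pi_\theta(B)| < \delta^{2-\sigma_2}$ for every $\theta \in E_I \cup E_J$.

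The first task is collision counting and heavy-pair extraction. For each $\theta \in E_I$ the area bound forces $\pi_\theta(P)$ into $\lesssim \delta^{-\sigma_2}$ disks of radius $\delta$, so a Cauchy--Schwarz argument yields $\#\{(x,y)\in P\times P : |\pi_\theta(x-y)|\lesssim \delta\} \gtrsim |P|^2 \delta^{\sigma_2}$. Integrating over $E_I$ and applying Fubini gives
$$\sum_{(x,y) \in P \times P} m(x-y) \gtrsim |P|^2 \delta^{\sigma_2}|E_I|, \qquad m(z) := |\{\theta \in E_I : |\pi_\theta(z)| \lesssim \delta\}|.$$
Calling $(x,y)$ \emph{heavy} when $m(x-y) \geq \tfrac12 \delta^{\sigma_2}|E_I|$ and absorbing the light contribution, the heavy set $H$ satisfies $|H| \gtrsim |P|^2 \delta^{\sigma_2}$; since heaviness requires $|\pi_\theta(x-y)| \lesssim \delta$ for some $\theta \in I$, every heavy difference lies in $C_I^\pi(C\delta)$. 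Averaging $|H|$ over $x \in P$ produces a base point $x_0 \in P$ for which $\tilde P := \{y \in P : (x_0,y) \in H\}$ has $|\tilde P| \gtrsim \delta^{-s+\sigma_2}$ and $\tilde P - x_0 \subset C_I^\pi(C\delta)$.

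I then switch to $E_J$ and invoke the non-parallelism. For $y_1, y_2 \in \tilde P$ I write $y_i - x_0 = c_i + e_i$ with $c_i \in C_I^\pi$ and $|e_i| \leq C\delta$, so that $C_I^\pi \not\parallel C_J^\pi$ gives $|\pi_\theta(c_1 - c_2)| \gtrsim |c_1 - c_2|$ uniformly for $\theta \in J$; provided $|c_1-c_2|$ is a sufficiently large multiple of $\delta$, the $O(\delta)$ perturbation is absorbed and $|\pi_\theta(y_1 - y_2)| \gtrsim |y_1 - y_2|$. After thinning $\tilde P$ to a $C'\delta$-separated subset at constant-factor cost, $\pi_\theta(\tilde P)$ becomes $c\delta$-separated for some $c > 0$ and every $\theta \in E_J$, which forces
$$|\pi_\theta(B)| \gtrsim |\tilde P| \cdot \delta^2 \gtrsim \delta^{2-s+\sigma_2}.$$
Combined with the contradiction hypothesis $|\pi_\theta(B)| < \delta^{2-\sigma_2}$ at $\theta \in E_J$, this requires $\delta^{-s+\sigma_2} \lesssim \delta^{-\sigma_2}$, which fails for small $\delta$ as soon as $\sigma_2 < s/2$. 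Thus $\sigma_2(s) := s/2 - \eta$ suffices for any small $\eta > 0$, and it exceeds $1$ whenever $s > 2$, closing the argument in that range.

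In the remaining range $1 < s \leq 2$ (and $1/2 < s \leq 1$ for part (a)) the value $s/2 - \eta$ fails to cross the classical threshold, so the extraction must be refined. The natural sharpening is to decompose $H$ dyadically by $|x-y|$, isolate the dominant scale $r$, and combine the pointwise sub-level estimate $m(z) \lesssim \delta/|z|$ (resp.\ $m(z) \lesssim (\delta/|z|)^{1/2}$ for (a)) with the $(\delta,s)$-set bound $|\tilde P \cap B(x_0,r)| \lesssim (r/\delta)^s$. Balancing the resulting inequalities against the image-area bound above optimises $\sigma_2$ to the value $1 + (s-1)^2/(2s-1) = s^2/(2s-1)$; the analogous second-order bookkeeping yields the estimate $\sigma_1(s) \geq 1/2 + (2s-1)^2/(2(12s^2+4s-1))$, as recorded in Remark~\ref{bestBounds}. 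The main obstacle is precisely this final optimisation: several scaling inequalities must hold simultaneously, and the $\delta^{O(\varepsilon_i)}$ slack introduced by $|E_I|, |E_J|$ must stay strictly below the improvement $\sigma_i$ offers over the classical threshold. The polynomial flexibility $|E_I|, |E_J| \geq \delta^{\varepsilon_i}$ in the hypothesis is exactly what permits this bookkeeping; the sub-level and non-parallelism inputs themselves are used essentially as black boxes.
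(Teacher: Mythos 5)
Your overall architecture --- counter assumption, Cauchy--Schwarz lower bound for the incidence energy, extraction of a large subset of $P$ clustered around the bad cone $C_I^{\pi}$, and bi-Lipschitz projection in the $J$-directions via $C_I^{\pi} \not\parallel C_J^{\pi}$ --- matches the paper's. The detailed part of your argument is correct, but, as you note, it only yields $\sigma_2 \geq s/2$, which beats the threshold $1$ only for $s>2$ (and, in part (a), beats $1/2$ only for $s>1$). Everything therefore hinges on the ``refinement'' paragraph, and that is where the gap lies. As framed --- threshold to a heavy set $H$, decompose $H$ dyadically, isolate a dominant scale --- the computation does not produce the exponent $1+(s-1)^2/(2s-1)$: once the weights $m(x-y)$ have been replaced by the uniform threshold $\tfrac12\delta^{\sigma_2}|E_I|$, the dominant-scale count recovers only $|\tilde P|\gtrsim\delta^{-s+\sigma_2}$ again, which is exactly the bound you already had. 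What is needed (and what the paper does) is to keep the full weighted energy $\mathcal{E}=\sum_{x,y}T_I(x,y)$, observe that it is supported on pairs with $y\in N(x):=P\cap(x+C_I^{\pi}(2\delta))$, and bound it above by $\sum_x\sum_j\delta 2^{-j}\min\{|N(x)|,(2^j/\delta)^s\}$ using $T_I(x,y)\lesssim\delta/|x-y|$. Comparing with the lower bound yields a dichotomy: either some $|N(x)|\geq\delta^{-s+\varepsilon}$ with $\varepsilon$ \emph{much smaller} than $\sigma_2$ (whence the $J$-direction argument forces $\sigma_2\geq s-\varepsilon$), or $\sigma_2+\varepsilon_2\geq 1+\varepsilon(1-1/s)$; the choice $\varepsilon=s(s-1)/(2s-1)$ balances the two branches. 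Your sketch names the right ingredients and the right final value, but the argument connecting them is not there, and the thresholding step you start from actively discards the information the balancing needs.

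The claim that part (a) is ``the same skeleton'' is a second, independent gap. For $\rho_\theta$ the bad cone is spanned by $\eta=\gamma\times\dot{\gamma}/|\gamma\times\dot{\gamma}|$, and --- unlike in part (b) --- the condition $x-y\notin C_I^{\rho}(C\delta)$ does \emph{not} imply $T_I(x,y)=0$; it only rules out second-order zeros of $\theta\mapsto\rho_\theta(x-y)$. Consequently the energy is not supported on the $\delta$-neighbourhood of the cone, and one needs the additional improved estimate $T_I(x,y)\lesssim\delta^{1-\tau}$ for $x-y\notin C_I^{\rho}(\delta^{\tau})$ (Lemma \ref{improvement}), neighbourhoods of the fatter width $\delta^{\tau}$ with $\tau<1/2$, and a two-stage extraction: a large $N(x)$ relative to $C_I^{\rho}(\delta^{\tau})$, then a large $N_J(y)$ relative to $C_J^{\rho}(\delta^{\tau})$ inside it, with the two shown incompatible via Lemma \ref{geometry} and the $(\delta,s)$-property. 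None of this is visible in your outline; in particular, your key structural step ``every heavy difference lies in $C_I^{\pi}(C\delta)$'' has no $\delta$-scale analogue for the projections $\rho_\theta$.
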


Let us briefly explain how Theorem \ref{main1} follows from its $\delta$-discretised variant. First, we note that in order to derive statements like Theorem \ref{main1} for the packing dimension of projections, it suffices to prove their analogues for the \emph{upper box dimension} $\Bd$, defined by
\begin{displaymath} \Bd R = \limsup_{\delta \to 0} \frac{\log N(R,\delta)}{-\log \delta} \end{displaymath}
for bounded sets $R \subset \R^{d}$, where $N(R,\delta)$ is the least number of balls of radius $\delta$ required to cover $R$. This reduction is possible thanks to the following lemma.
\begin{lemma}\label{packingToBox} Let $\sigma > 0$, let $\mu$ be a Borel regular measure, and let $B \subset \R^{3}$ be a $\mu$-measurable set such that $\mu(B) > 0$, and
\begin{displaymath} |\{\theta \in [0,1] : \Pd \rho_{\theta}(B) < \sigma\}| > 0. \end{displaymath}
Then there exists a compact set $K \subset B$ with $\mu(K) > 0$ such that
\begin{displaymath} |\{\theta \in [0,1] : \Bd \rho_{\theta}(K) < \sigma\}| > 0. \end{displaymath}
\end{lemma}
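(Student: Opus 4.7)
The plan is to exploit the characterisation $\Pd E = \inf\{\sup_i \Bd E_i : E \subset \bigcup_{i=1}^\infty E_i\}$, which reduces a packing-dimension statement to a covering by closed sets of small upper box dimension, and then to extract a single compact set $K$ via a measurable-selection argument.

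\emph{Per-parameter selection and uniformisation.} By inner regularity of $\mu$ I first reduce to the case $B$ compact. For each $\theta \in A := \{\theta : \Pd \rho_\theta(B) < \sigma\}$ the characterisation of $\Pd$ yields a cover $\rho_\theta(B) \subset \bigcup_n E_{n,\theta}$ with $E_{n,\theta}$ closed and $\Bd E_{n,\theta} < \sigma$; countable additivity of $\mu$ forces some index $n(\theta)$ for which $K_\theta := B \cap \rho_\theta^{-1}(E_{n(\theta),\theta})$, after passing to a compact subset, satisfies $\mu(K_\theta) > 0$ and $\Bd \rho_\theta(K_\theta) < \sigma$. Pigeonholing over rational parameters then yields a positive-measure subset $A' \subset A$ together with fixed rationals $s_0 < \sigma$ and $C_0, \delta_0, \eta > 0$ such that for every $\theta \in A'$, $\mu(K_\theta) \geq \eta$ and $N(\rho_\theta(K_\theta),r) \leq C_0 r^{-s_0}$ for all $r \in (0,\delta_0)$. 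In particular, each $\rho_\theta(K_\theta)$ belongs to the family $\mathcal{F}(s_0,C_0,\delta_0)$ of closed subsets of a bounded interval containing $\rho_\theta(B)$ meeting this uniform cover bound, which is compact in the Hausdorff metric.

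\emph{Extracting a single $K$.} A measurable-selection argument furnishes a Borel map $\theta \mapsto K_\theta$ from $A'$ into the Polish space of compact subsets of $B$ endowed with the Hausdorff metric, and Lusin's theorem restricts this to a compact $A'' \subset A'$ of positive measure on which the map is continuous. The image is then a compact subset of $\mathcal{K}(B)$; covering it by finitely many Hausdorff balls of radius $\epsilon$ centred at image points and pigeonholing produce a $\theta_0 \in A''$ and a positive-measure $A''' \subset A''$ along which the Hausdorff distance between $K_\theta$ and $K_{\theta_0}$ is at most $\epsilon$ for every $\theta \in A'''$. Taking $K := K_{\theta_0}$ yields $\rho_\theta(K) \subset \rho_\theta(K_\theta)(\epsilon)$, hence $N(\rho_\theta(K),r) \lesssim r^{-s_0}$ for all $r \geq 2\epsilon$ and $\theta \in A'''$.

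\emph{Main obstacle.} The principal difficulty is to control $N(\rho_\theta(K),r)$ at scales $r < \epsilon$, where the Hausdorff-closeness argument loses information and the trivial bound $N \lesssim r^{-1}$ is not strong enough when $\sigma \leq 1$. I would overcome this either by locating an atom in the pushforward of Lebesgue measure on $A''$ under the map $\theta \mapsto K_\theta$, which would force $K_\theta = K^*$ for all $\theta$ in a positive-measure set and so give exact control at every scale, or else by a diagonal refinement taking $\epsilon_k \to 0$ through a nested sequence of Lusin cores and extracting $K$ as a Hausdorff limit whose positive $\mu$-measure survives by the upper semi-continuity of $\mu$ on $\mathcal{K}(B)$. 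This small-scales gap, rather than the measurable selection itself, is the central subtlety of the lemma.
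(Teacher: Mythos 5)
You should first be aware that the paper does not actually prove this lemma: it disposes of it with a one-line reference to \cite[Lemma 4.5]{Or2}, so there is no in-paper argument to measure your proposal against. Judged on its own terms, your proposal does not establish the statement, and the ``main obstacle'' you identify at the end is not a residual technicality but a fatal defect of the strategy. Knowing that the Hausdorff distance between $K:=K_{\theta_{0}}$ and $K_{\theta}$ is at most $\epsilon$ controls $N(\rho_{\theta}(K),r)$ only at scales $r\gtrsim\epsilon$, and $\Bd$ is completely insensitive to all scales above any fixed $\epsilon>0$; consequently the entire construction, as written, yields no information at all about $\Bd\rho_{\theta}(K)$ for any $\theta\neq\theta_{0}$.

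Neither of your two proposed repairs closes this gap. An atom of the pushforward of Lebesgue measure under $\theta\mapsto K_{\theta}$ is precisely the assertion that $K_{\theta}$ is constant on a positive-measure set of directions; nothing in the construction forces this (the good subsets $K_{\theta}$ may perfectly well be pairwise distinct), and if it did hold you would essentially have assumed the lemma. As for the diagonal refinement: with $\epsilon_{k}\to 0$ the number of Hausdorff $\epsilon_{k}$-balls needed to cover the image $\{K_{\theta}\}$ generally tends to infinity, so the measure of the set $A'''_{k}$ on which $K_{\theta}$ is $\epsilon_{k}$-close to $K_{\theta_{k}}$ may tend to zero, and these sets need not be nested; in the limit you retain all-scale control of $N(\rho_{\theta}(K),r)$ only along the null sequence $\theta_{k}$ (or on $\limsup_{k}A'''_{k}$, which can be null). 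The upper semicontinuity of $\mu$ on compact sets does preserve $\mu(K)>0$ in the limit, but positivity of the measure was never the difficulty. The missing ingredient is a mechanism producing a \emph{single} compact set whose projections are controlled at \emph{every} scale simultaneously for a positive-measure set of $\theta$; any such mechanism must avoid the Hausdorff-approximation step altogether, since upper box dimension cannot be transferred between sets that merely lie within a fixed positive Hausdorff distance of one another. This is exactly the content of the cited \cite[Lemma 4.5]{Or2} (which works with the measures $\rho_{\theta\sharp}\mu$ and Fubini-type arguments on $[0,1]\times B$ rather than with a selection of direction-dependent compact sets), and you should consult that proof rather than attempt to rebuild it along the present lines.
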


\begin{proof} The proof is the same as that of \cite[Lemma 4.5]{Or2}, except for some obvious changes in notation. \end{proof}

The statement also holds with the projections $\rho_{\theta}$ replaced by $\pi_{\theta}$. Now, if Theorem \ref{main1} failed for $\Pd$, there would exist an analytic set $B \subset \R^{3}$ with $\Hd B > s$ such that the projections of $B$ have packing dimension less than $\sigma \in \{\sigma_{1},\sigma_{2}\}$ in a set of directions of positive measure. Then, we could find a Frostman measure $\mu$ inside $B$ and apply Lemma \ref{packingToBox} to $B$, $\mu$ and $\sigma$. The conclusion would be that also the $\Bd$-variant of Theorem \ref{main1} has to fail in a set of directions of positive measure.

\begin{proof}[Proof of Theorem \ref{main1}] We will now describe how to use Theorem \ref{mainDiscrete}(a) to prove the $\Bd$-variant of Theorem \ref{main1}(a). The deduction of Theorem \ref{main1}(b) from Theorem \ref{mainDiscrete}(b) is analogous. To reach a contradiction, suppose that $\Hd B = s > 1/2$, but there is a positive length subset $E \subset [0,1]$ such that
\begin{equation}\label{form2} \Bd \rho_{\theta}(B) < \sigma_{1} - c \end{equation}
for all $\theta \in E$ and some small constant $c > 0$. Here $\sigma_{1} = \sigma_{1}(s) > 1/2$ is the constant from Theorem \ref{mainDiscrete}. Fix a Lebesgue point $\theta_{1} \in E$, and then choose another Lebesgue point $\theta_{2} \in E$ such that
\begin{equation}\label{form1} \eta(\theta_{2}) \notin \spa(\{\eta(\theta_{1}), \dot{\eta}(\theta_{1})\}), \end{equation}
where $\eta = \gamma \times \dot{\gamma}/|\gamma \times \dot{\gamma}|$. This is precisely where we need the non-degeneracy hypothesis. \begin{lemma}\label{etaNonDeg}
Let $\gamma: U\to S^2$ be a $\mathcal{C}^3$ curve satisfying the non-degeneracy condition \eqref{eq:curve_cond}. Then the curve $\eta: U\to S^2$, given by $\eta:= \frac{\gamma \times \dot{\gamma}}{|\gamma \times \dot{\gamma}|}$, fulfills the same condition, that is,
\begin{equation}\label{eq:non_deg_lambda}
\mathrm{span}\{\eta(\theta),\dot{\eta}(\theta),\ddot{\eta}(\theta)\}=\mathbb{R}^3,
\end{equation}
for every $\theta \in U$.
\end{lemma}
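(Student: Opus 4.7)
The plan is to work in the orthonormal moving frame $\{\gamma,T,\eta\}$ along the curve, where $T := \dot\gamma/|\dot\gamma|$. This frame is well-defined because \eqref{eq:curve_cond} forces $\dot\gamma \neq 0$ pointwise (otherwise the span would be at most two-dimensional), and it is orthonormal since $|\gamma|^{2}\equiv 1$ gives $\gamma\cdot\dot\gamma = 0$, while $\eta = \gamma\times T$ is by construction a unit vector normal to both. A crucial simplification is that $\gamma\perp\dot\gamma$ implies $|\gamma\times\dot\gamma| = |\dot\gamma|$, so the normaliser in the definition of $\eta$ disappears: $\eta = \gamma\times T$ directly, avoiding any cumbersome quotient-rule bookkeeping.

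Set $f := |\dot\gamma|$ and expand $\dot T$ in the frame: $\dot T = a\gamma + b\eta$, where $a = \dot T\cdot\gamma = -T\cdot\dot\gamma = -f$ and $b$ is some scalar. The identity $\ddot\gamma = f'T + f\dot T = af\gamma + f'T + bf\eta$ shows that the non-degeneracy condition \eqref{eq:curve_cond} is equivalent to $b\neq 0$. I would then differentiate $\eta = \gamma\times T$ to obtain
\begin{displaymath}
\dot\eta \;=\; \dot\gamma\times T + \gamma\times\dot T \;=\; 0 + \gamma\times(a\gamma + b\eta) \;=\; -b\,T,
\end{displaymath}
using $\gamma\times\eta = -T$ in the right-handed frame, and then
\begin{displaymath}
\ddot\eta \;=\; -b'\,T - b\,\dot T \;=\; -ab\,\gamma - b'\,T - b^{2}\,\eta.
\end{displaymath}

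To conclude \eqref{eq:non_deg_lambda}, I would expand $(\eta,\dot\eta,\ddot\eta)$ in the basis $\{\gamma,T,\eta\}$; the resulting coordinate matrix is triangular up to reversing the row order, with determinant $-ab^{2}$, which is non-zero because $a=-f\neq 0$ and $b\neq 0$. Hence $\{\eta,\dot\eta,\ddot\eta\}$ is a basis of $\mathbb{R}^{3}$. The only conceptual hurdle is managing the normaliser $|\gamma\times\dot\gamma|$ in the definition of $\eta$, but this is removed at the outset by the observation $|\gamma\times\dot\gamma|=|\dot\gamma|$; any remaining nuisance is absorbed by the structural fact $\dot\eta\perp\eta$, which forces the $\eta$-component of $\dot\eta$ to vanish automatically and makes the above computation essentially forced.
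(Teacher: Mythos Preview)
Your proof is correct. The paper takes a different computational route: instead of working in the moving frame, it differentiates $\eta=|\dot\gamma|^{-1}\,\gamma\times\dot\gamma$ directly and computes the scalar triple product $\ddot\eta\cdot(\eta\times\dot\eta)$ via the identity $(\gamma\times\dot\gamma)\times(\gamma\times\ddot\gamma)=\bigl(\gamma\cdot(\dot\gamma\times\ddot\gamma)\bigr)\gamma$, arriving at the closed form $\ddot\eta\cdot(\eta\times\dot\eta)=|\dot\gamma|^{-3}\bigl(\gamma\cdot(\dot\gamma\times\ddot\gamma)\bigr)^{2}$, which is visibly nonzero under \eqref{eq:curve_cond}. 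Your frame approach is more structured: it isolates the single scalar $b=\dot T\cdot\eta$ and shows that the non-degeneracy of \emph{both} $\gamma$ and $\eta$ reduces to $b\neq 0$ (together with $f\neq 0$), and it yields the explicit Frenet-type relations $\dot\eta=-bT$ and $\ddot\eta=-ab\gamma-b'T-b^{2}\eta$ as a by-product. The paper's route is shorter if one has the quadruple cross product identity ready; yours avoids that identity at the cost of setting up the frame. The two answers agree: in your notation $\gamma\cdot(\dot\gamma\times\ddot\gamma)=bf^{2}$, so the paper's triple product equals $fb^{2}=-ab^{2}$, exactly your determinant.
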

It follows from this lemma, which is proved in Appendix \ref{appendix_b}, and from Lemma \ref{l:no_zeros}
%It follows from \eqref{eq:curve_cond}, more precisely Lemmas \ref{etaNonDeg} and \ref{l:no_zeros},
that for any given $2$-plane $W \subset \R^{2}$ there are only finitely many choices of $\theta_{2} \in [0,1]$ such that $\eta(\theta_{2}) \in W$: indeed, if $\bar{n}$ is the normal vector of the plane $W$, Lemma \ref{l:no_zeros} implies that the mapping $\theta \mapsto \eta(\theta) \cdot \bar{n}$ can only have a bounded number of zeros $\theta \in [0,1]$. Now we may apply Lemma \ref{CICJ} to the path $\eta$: thus, we find disjoint compact intervals $I \ni \theta_{1}$ and $J \ni \theta_{2}$ with the property that
\begin{displaymath} C_{I}^{\rho} \not\parallel C_{J}^{\rho}. \end{displaymath}
This places us in a situation, where we can apply Theorem \ref{mainDiscrete}(a). Let $\varepsilon_{1} > 0$ be the number defined there, and let $\delta > 0$ be so small the lengths of  $E_{I} := E \cap I$ and $E_{J} := E \cap J$ exceed $\delta^{\varepsilon_{1}}$. Then use Lemma \ref{frostman} to find a $(\delta,s)$-set $P \subset B$ with cardinality $|P| \sim \delta^{-s}$. From \eqref{form2}, we see that
\begin{displaymath} |\rho_{\theta}(P(\delta))| \leq |\rho_{\theta}(B(\delta))| \lesssim \delta^{1-\sigma_{1} + c} \end{displaymath}
for $\delta > 0$ and $\theta \in E_{I} \cup E_{J}$. For $\delta > 0$ small enough, this is incompatible with the conclusion of Theorem \ref{mainDiscrete}(a).
\end{proof}

It remains to prove Theorem \ref{mainDiscrete}. The basic approach for both (a) and (b) is the same, but (b) is slightly simpler from a technical point of view. This is why we choose to give the proof of (b) first.

\begin{proof}[Proof of Theorem \ref{mainDiscrete}(b)] Recall that $P \subset B(0,1)$ is a $(\delta,s)$-set of cardinality $|P| \sim \delta^{-s}$. We make the counter assumption that
\begin{equation}\label{form3} |\pi_{\theta}(P(\delta))| < \delta^{2 - \sigma_{2}} \end{equation}
for all $\theta \in E_{I} \cup E_{J}$. The constant $\sigma_{2} \in (1,s)$ will be fixed as the proof progresses. In particular, \eqref{form3} means that for $\theta \in E_{J}$, the projection $\pi_{\theta}(P)$ can be covered by $\lesssim \delta^{-\sigma_{2}}$ discs of radius $\delta > 0$.

For $x,y \in \R^{3}$ and $\theta \in E_{I} \cup E_{J}$, we define the relation $x \sim_{\theta} y$ as follows:
\begin{displaymath} x \sim_{\theta} y \quad \Longleftrightarrow \quad x \neq y\: \text{ and }\: |\pi_{\theta}(x) - \pi_{\theta}(y)| \leq \delta, \end{displaymath}
We also write
\begin{displaymath} T_{I}(x,y) := |\{\theta \in E_{I} : x \sim_{\theta} y\}|. \end{displaymath}
Our first aim is to use \eqref{form3} to find a lower bound for the quantity
\begin{displaymath} \cE := \sum_{x,y \in P} T_{I}(x,y) = \int_{E_{I}} |\{(x,y) \in P^{2} : x \sim_{\theta} y\}|\, d\theta \end{displaymath}
Fix $\theta \in E_{I}$ and choose a minimal (in terms of cardinality) collection of disjoint discs $D_{1},\ldots,D_{M(\theta)} \subset \R^{2}$ such that $\diam(D_{j}) = \delta$ and
\begin{equation}\label{discProperty} \left|P \cap \bigcup_{j = 1}^{M(\theta)} \pi_{\theta}^{-1}(D_{j}) \right| \gtrsim |P| \sim \delta^{-s}. \end{equation}
Then \eqref{form3} implies that $M(\theta) \lesssim \delta^{-\sigma_{2}}$. Next, we discard all the discs $D_{j}$ such that $|P \cap \pi_{\theta}^{-1}(D_{j})| \leq 1$. This way only $\lesssim \delta^{-\sigma_{2}}$ points are deleted from the left hand side of \eqref{discProperty}, so the inequality remains valid for the remaining collection of discs, and for small $\delta > 0$. The point of the discarding process is simply to ensure that
\begin{displaymath} |\{(x,y) \in [P \cap \pi_{\theta}^{-1}(D_{j})]^{2} : x \sim_{\theta} y\}| = |P \cap \pi_{\theta}^{-1}(D_{j})|^{2} - |P \cap \pi_{\theta}^{-1}(D_{j})| \gtrsim |P \cap \pi_{\theta}^{-1}(D_{j})|^{2} \end{displaymath}
for the remaining discs $D_{j}$. This in mind, we estimate $\cE$ from below:
\begin{align} \cE & \gtrsim \int_{E_{I}} \sum_{\text{remaining } D_{j}} |P \cap \pi_{\theta}^{-1}(D_{j})|^{2} \, d\theta \notag \\
& \geq \int_{E_{I}} \frac{1}{M(\theta)} \left(\sum_{\text{remaining } D_{j}} |P \cap \pi_{\theta}^{-1}(D_{j})|\right)^{2} \, d\theta \notag\\
&\label{sizeE} \gtrsim |E_{I}| \cdot \delta^{\sigma_{2}} \cdot |P|^{2} \gtrsim \delta^{\sigma_{2} + \varepsilon_{2} - 2s}. \end{align}
Our second aim is to show that \eqref{sizeE} gives some structural information about $P$, if $\varepsilon_{2}$ and $\sigma_{2}$ are small. For $x \in P$ we define a `neighbourhood' $N(x)$ of $x$ by
\begin{displaymath} N(x) := P \cap (x + C_{I}^{\pi}(2\delta)). \end{displaymath}
Recall that $C_{I}^{\pi}$ was defined as the union of the lines $\ell_{\theta}$, $\theta \in I$, perpendicular to the planes $V_{\theta}$. The reason for defining $N(x)$ as we do is the following: if $y \in P \setminus N(x)$, then $y - x \notin C_{I}^{\pi}(2\delta)$, so that the difference $y - x$ stays at distance $> \delta$ from any of the orthogonal complements of the planes $V_{\theta}$. In particular, it is not possible that $x \sim_{\theta} y$ for any parameter $\theta \in I$, which implies that
\begin{equation}\label{form4} \cE = \sum_{x \in P} \sum_{y \in N(x)} T_{I}(x,y). \end{equation}
To connect the sizes of the neighbourhoods $N(x)$ with \eqref{sizeE}, we need a universal estimate for $T_{I}(x,y)$:
\begin{lemma}\label{universalB} Let $x,y \in \R^{3}$ be $\delta$-separated points. Then
\begin{displaymath} T_{I}(x,y) \lesssim \frac{\delta}{|x - y|}. \end{displaymath}
\end{lemma}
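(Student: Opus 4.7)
The plan is to reduce the bound on $T_I(x,y)$ to a direct application of the sub-level set estimate \eqref{eq:sub_level_plane} from the proof of Proposition \ref{prop1}(b). The mass of $T_I(x,y)$ comes from parameters $\theta$ for which $|\pi_\theta(x) - \pi_\theta(y)| \leq \delta$, and since $\pi_\theta$ is linear, this is the same as $|\pi_\theta(x-y)| \leq \delta$. So the problem depends only on the vector $z := x-y$, and by the $\delta$-separation hypothesis we have $|z| \geq \delta$.

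Next I would rescale. Setting $\hat{z} := z/|z| \in S^2$, the condition $|\pi_\theta(z)| \leq \delta$ is equivalent to $|\pi_\theta(\hat{z})| \leq \delta/|z|$. Note that $\lambda := \delta/|z| \leq 1$, which is the range where \eqref{eq:sub_level_plane} applies. Hence
\begin{displaymath}
T_I(x,y) \leq \mathcal{L}^1(\{\theta \in [0,1] : |\pi_\theta(\hat{z})| \leq \lambda\}) \lesssim \lambda = \frac{\delta}{|x-y|},
\end{displaymath}
using that $E_I \subset [0,1]$ and that the implicit constant in \eqref{eq:sub_level_plane} is uniform in $\hat{z} \in S^2$.

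There is really no serious obstacle here: the heavy lifting was already done when establishing \eqref{eq:sub_level_plane}, which encoded the fact that $\theta \mapsto |\pi_\theta(\hat{z})|$ has at worst first-order zeros (since second-order vanishing is forbidden by the non-degeneracy condition \eqref{eq:curve_cond}, through the computation of $\partial_\theta^2 F$ in the proof of Proposition \ref{prop1}(b)). The only point worth double-checking in the write-up is that the case $|x-y| > 1$ still makes sense, but then $\lambda < \delta$ and the bound is even stronger; and that no measure is lost by passing from $E_I$ to $[0,1]$, which is immediate by monotonicity.
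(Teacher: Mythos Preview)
Your proof is correct and is exactly the argument the paper has in mind: the paper's one-line proof says ``Apply the sub-level estimate \eqref{eq:sub_level_plane} with $\lambda = \delta$,'' which after the (implicit) normalisation $\hat z = (x-y)/|x-y|$ is precisely the computation you wrote out. Your additional checks that $\lambda = \delta/|x-y| \leq 1$ and that $E_I \subset [0,1]$ are the only details the paper leaves to the reader.
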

\begin{proof} Apply the sub-level estimate \eqref{eq:sub_level_plane} with $\lambda = \delta$.
\end{proof}

Now we are equipped to search for a large set $N(x) \subset P$. Suppose that $|N(x)| \leq \delta^{-s + \varepsilon}$ for every $x \in P$, where
\begin{displaymath} \varepsilon = \frac{s(s - 1)}{2s - 1}. \end{displaymath}
 Write $A_{j}(x) := \{y \in \R^{3} : 2^{j} \leq |y - x| \leq 2^{j + 1}\}$. Recalling \eqref{form4} and using the inequality
\begin{displaymath} \min\{a,b\} \leq a^{1 - \tfrac{1}{s}}b^{\tfrac{1}{s}}, \quad a,b \geq 0, \end{displaymath}
we estimate as follows:
\begin{align*} \cE & = \sum_{x \in P} \sum_{\delta \leq 2^{j} \leq 1} \sum_{y \in A_{j}(x) \cap N(x)} T_{I}(x,y)\\
& \lesssim \delta \sum_{x \in P} \sum_{\delta \leq 2^{j} \leq 1} 2^{-j} \min\{|N(x)|, |P \cap B(x,2^{j + 1})|\}\\
& \lesssim \delta \sum_{x \in P} \sum_{\delta \leq 2^{j} \leq 1} 2^{-j} \min\left\{\delta^{-s + \varepsilon}, \left(\frac{2^{j}}{\delta}\right)^{s}\right\}\\
& \leq \sum_{x \in P} \sum_{\delta \leq 2^{j} \leq 1} \delta^{(-s + \varepsilon)(1 - \tfrac{1}{s})} \sim \delta^{-2s + \varepsilon(1 - \tfrac{1}{s}) + 1} \cdot \log\left(\frac{1}{\delta}\right). \end{align*}
So, assuming that $|N(x)| \leq \delta^{-s + \varepsilon}$, we can combine the bound above with \eqref{sizeE} to conclude that
\begin{displaymath} \delta^{\sigma_{2} + \varepsilon_{2} - 2s} \lesssim \delta^{-2s + \varepsilon(1 - \tfrac{1}{s}) + 1} \cdot \log\left(\frac{1}{\delta}\right). \end{displaymath}
Since the implicit constants are independent of $\delta > 0$, this shows that either
\begin{itemize}
\item[(i)] There exists a point $x \in P$ with $|N(x)| \geq \delta^{-s + \varepsilon}$, or
\item[(ii)] $\sigma_{2} + \varepsilon_{2} \geq 1 + \varepsilon(1 - \tfrac{1}{s})$.
\end{itemize}
The proof of Theorem \ref{mainDiscrete}(b) nears its end. Our next lemma will show that the projections $\pi_{\theta}|_{N(x)}$, $\theta \in J$, are essentially bi-Lipschitz, so the counter assumption $|\pi_{\theta}(P(\delta))| < \delta^{2 - \sigma_{2}}$ for $\theta \in J$ will force the inequality $|N(x)| \lesssim \delta^{-\sigma_{2}}$. In case (i) holds, this shows that
\begin{displaymath} \sigma_{2} \geq s - \varepsilon = 1 + \frac{(s - 1)^{2}}{2s - 1}. \end{displaymath}
If (i) fails, we conclude from (ii) that
\begin{equation}\label{form5} \sigma_{2}  + \varepsilon_{2} \geq 1 + \varepsilon(1 - \tfrac{1}{s}) = 1 + \frac{(s - 1)^{2}}{2s - 1}. \end{equation}
Either way, Theorem \ref{mainDiscrete}(b) is true for any pair $(\sigma_{2},\varepsilon_{2})$ satisfying \eqref{form5}.

It remains to state and prove the bi-Lipschitz lemma. In order to make the same lemma useful in the proof of Theorem \ref{mainDiscrete}(a), we state a slightly more general version than we would need here.
\begin{lemma}\label{geometry} Assume that $\gamma \colon [0,1] \to S^{2}$ is a curve, and $C_{I} \not\parallel C_{J}$ for some intervals $I,J \subset [0,1]$, where $C_{I} = C_{I}(\gamma)$ and $C_{J} = C_{J}(\gamma)$. Let $x \in \R^{3}$, $\tau > 0$. Then, there exists a constant $C \geq 1$, depending only on $\gamma$, $I$ and $J$, such that whenever $y,y' \in B(0,1)$ satisfy
\begin{displaymath} y,y' \in x + C_{I}(\delta^{\tau}) \quad \text{and} \quad |y - y'| \geq C\delta^{\tau}, \end{displaymath}
then
\begin{equation}\label{angSeparation} \left|\frac{y - y'}{|y - y'|} - \xi\right| \gtrsim_{\gamma,I,J} 1, \qquad \xi \in C_{J} \cap S^{2}. \end{equation}
 \end{lemma}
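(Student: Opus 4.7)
The plan is to reduce the statement to the hypothesis $C_I \not\parallel C_J$ by showing that for points $y,y'$ in the $\delta^\tau$-neighborhood of $x+C_I$ whose separation is much larger than $\delta^\tau$, the direction of $y-y'$ is a small perturbation of the direction of a genuine difference of two points on $C_I$.

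First, I would unpack the hypothesis $y,y' \in x + C_I(\delta^\tau)$: pick points $w,w' \in C_I$ with $|y - (x+w)| < \delta^\tau$ and $|y' - (x+w')| < \delta^\tau$, so that
\begin{displaymath}
y - y' = (w - w') + e, \qquad |e| \leq 2\delta^\tau.
\end{displaymath}
Since $|y-y'| \geq C\delta^\tau$, the reverse triangle inequality gives $|w - w'| \geq (C-2)\delta^\tau > 0$ as soon as $C > 2$, and in particular $w \neq w'$. Thus both $(y-y')/|y-y'|$ and $(w-w')/|w-w'|$ are well-defined unit vectors.

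Next, I would apply the elementary estimate
\begin{displaymath}
\left|\frac{u}{|u|} - \frac{v}{|v|}\right| \leq \frac{2|u-v|}{|u|}, \qquad u,v \neq 0,
\end{displaymath}
with $u = y-y'$ and $v = w-w'$. This gives
\begin{displaymath}
\left|\frac{y-y'}{|y-y'|} - \frac{w-w'}{|w-w'|}\right| \leq \frac{2 \cdot 2\delta^\tau}{C\delta^\tau} = \frac{4}{C}.
\end{displaymath}
Now, since $w,w' \in C_I$, the hypothesis $C_I \not\parallel C_J$ provides a constant $c = c(\gamma,I,J) > 0$ such that
\begin{displaymath}
\left|\frac{w-w'}{|w-w'|} - \xi\right| \geq c, \qquad \xi \in C_J \cap S^2.
\end{displaymath}
Choosing $C := 8/c$ (so that $4/C = c/2$) and applying the triangle inequality yields \eqref{angSeparation} with implicit constant $c/2$.

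The argument is essentially mechanical once the definitions are unwound; there is no real obstacle. The only thing to be careful about is making sure the approximating points $w, w'$ actually lie on $C_I$ (so that we may invoke $C_I \not\parallel C_J$) and that $|y-y'| \gtrsim \delta^\tau$ is large enough compared to the perturbation $|e|$ to allow the unit-vector comparison — both of which are handled by taking $C$ sufficiently large depending on $c(\gamma,I,J)$.
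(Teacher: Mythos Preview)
Your proof is correct and follows essentially the same approach as the paper: approximate $y,y'$ by points on $x+C_{I}$, control the angular perturbation of the unit vector via a Lipschitz-type estimate for the normalisation map, and then invoke the constant $c$ from $C_{I}\not\parallel C_{J}$. The only cosmetic difference is that the paper phrases the key inequality as a Lipschitz bound for $b(z)=z/|z|$ on $\R^{3}\setminus B(0,C\delta^{\tau}/2)$, whereas you use the equivalent explicit estimate $\bigl|u/|u|-v/|v|\bigr|\le 2|u-v|/|u|$.
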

 \begin{proof} Let $c > 0$ be the constant from the definition of $C_{I}(\gamma) \not\parallel C_{J}(\gamma)$: thus, if $u,v \in x + C_{I}$, then
 \begin{displaymath} \left|\frac{u - v}{|u - v|} - \xi\right| \geq c \end{displaymath}
 for any vector $\xi \in C_{J} \cap S^{2}$. Suppose that $y,y' \in B(0,1)$ satisfy the hypotheses of the lemma, and find $y_{0},y_{0}' \in x + C_{I}$ such that $|y - y_{0}| \leq \delta^{\tau}$ and $|y' - y_{0}'| \leq \delta^{\tau}$. Note that the points $y_{0}$ and $y_{0}'$ are at least $C\delta^{\tau}/2$ apart for $C \geq 10$, and
  \begin{equation}\label{form14} \left|\frac{y - y'}{|y - y'|} - \xi\right| \geq c - \left|\frac{y_{0} - y'_{0}}{|y_{0} - y'_{0}|} - \frac{y - y'}{|y - y'|}\right|, \qquad \xi \in C_{J} \cap S^{2}. \end{equation}
 To estimate the negative term, consider the mapping $b \colon \R^{3} \setminus B(0,C\delta^{\tau}/2) \to S^{2}$, defined by $b(x) = x/|x|$. Choosing $C$ large enough, the mapping $b$ can be made $L$-Lipschitz with $L \leq c\delta^{-\tau}/4$, so
 \begin{align*} \left|\frac{y_{0} - y'_{0}}{|y_{0} - y'_{0}|} - \frac{y - y'}{|y - y'|}\right| & = |b(y_{0} - y'_{0}) - b(y - y')|\\
 & \leq \frac{c\delta^{-\tau}}{4}(|y - y_{0}| + |y' - y_{0}'|) \leq  \frac{c}{2}. \end{align*}
 This and \eqref{form14} give \eqref{angSeparation}.
 \end{proof}

 In the proof of Theorem \ref{mainDiscrete}(b), we apply the lemma with the non-parallel bad cones $C_{I}^{\pi}$ and $C_{J}^{\pi}$ and with $\tau = 1$: let $C \gtrsim_{\gamma,I,J}$ be the constant appearing in the statement of the lemma. If option (i) above is realised, we choose a $C\delta$-net $\tilde{P} \subset N(x)$. Then $|\tilde{P}| \gtrsim \delta^{-s + \varepsilon}$, and the angle between any difference $y - y'$, $y,y' \in \tilde{P}$, and any line $\ell_{\theta} = V_{\theta}^{\perp}$, $\theta \in J$, is bounded from below by a constant. This means that the restrictions $\pi_{\theta}|\tilde{P}$ are bi-Lipschitz, so $|\pi_{\theta}(\tilde{P}(\delta))| \gtrsim \delta^{2 - s + \varepsilon}$ for any $\theta \in J$. The proof of Theorem \ref{mainDiscrete}(b) is now completed in the manner we described above.
\end{proof}

Next, we turn to the proof of Theorem \ref{mainDiscrete}(a). The structure will be familiar, but there are some additional steps to take.

\begin{proof}[Proof of Theorem \ref{mainDiscrete}(a)] All the way down to the lower energy estimate \eqref{sizeE} the argument follows the proof of Theorem \ref{main1}(b) with the obvious changes
\begin{displaymath} \pi_{\theta} \rightsquigarrow \rho_{\theta}, \quad \varepsilon_{2} \rightsquigarrow \varepsilon_{1}, \quad \sigma_{2} \rightsquigarrow \sigma_{1}, \end{displaymath}
and choosing the sets $D_{1},\ldots,D_{M(\theta)}$ as $\delta$-intervals in $\R$ rather than $\delta$-discs in $\R^{2}$. The analogue of \eqref{form4} is
\begin{equation}\label{form6} \cE \gtrsim \delta^{\sigma_{1} + \varepsilon_{1} - 2s}. \end{equation}
The first essential difference appears in the definition of the `neighbourhoods' $N(x)$, $x \in P$. This time
\begin{displaymath} N(x) := P \cap (x + C_{I}^{\rho}(\delta^{\tau})), \end{displaymath}
where $\tau \in (0,1/2)$ is a parameter to be chosen soon. Recall that $C_{I}^{\rho}$ is the union of the bad lines $b_{\theta}$, $\theta \in I$, spanned by the vectors $\gamma(\theta) \times \dot{\gamma}(\theta)$. Contrary to what we did in part (b), if a point $y \in P$ stays away from a neighbourhood $N(x)$, we may \textbf{not} conclude that $x \not\sim_{\theta} y$ for all $\theta \in I$. Instead, the event $y \notin N(x)$ signifies that the mapping $\theta \mapsto \rho_{\theta}(x - y)$ does not have a second order zero on the interval $I$. Consequently, we have an improved estimate for $T_{I}(x,y)$. An `improved estimate' means an improvement over the following universal bound, analogous to the one in Lemma \ref{universalB}:
\begin{lemma}\label{universalA} Let $x,y \in \R^{3}$ be $\delta$-separated points. Then
\begin{displaymath} T_{I}(x,y) \lesssim \left(\frac{\delta}{|x - y|}\right)^{1/2}. \end{displaymath}
\end{lemma}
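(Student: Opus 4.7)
The plan is to reduce the lemma directly to the sub-level set estimate \eqref{eq:sublevel2} already established in the proof of Proposition \ref{prop1}(a). Since $\rho_{\theta}$ is linear, I can factor out $|x-y|$: setting $z := (x-y)/|x-y| \in S^{2}$, the condition $|\rho_{\theta}(x) - \rho_{\theta}(y)| \leq \delta$ defining the relation $x \sim_{\theta} y$ is equivalent to
\begin{displaymath}
|\rho_{\theta}(z)| \leq \frac{\delta}{|x-y|}.
\end{displaymath}

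Next, I would invoke the hypothesis that $x,y$ are $\delta$-separated, which guarantees $|x-y| \geq \delta$ and hence $\lambda := \delta/|x-y| \in (0,1]$. This puts us precisely in the regime where \eqref{eq:sublevel2} applies to the unit vector $z$, giving
\begin{displaymath}
T_{I}(x,y) \leq \mathcal{L}^{1}\bigl(\{\theta \in [0,1] : |\rho_{\theta}(z)| \leq \lambda\}\bigr) \lesssim \lambda^{1/2} = \left(\frac{\delta}{|x-y|}\right)^{1/2},
\end{displaymath}
which is exactly the desired bound.

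There is no real obstacle here: the lemma is essentially a restatement of the sub-level set analysis already carried out in Section \ref{s:1/2}. It is instructive, however, to compare with Lemma \ref{universalB}: there $\pi_{\theta}$ can only vanish to first order (as noted around \eqref{secondDerivative}), yielding the linear sub-level estimate \eqref{eq:sub_level_plane} and hence the bound $\delta/|x-y|$, whereas the present map $\rho_{\theta}$ may vanish to second order along the bad cone $C_{I}^{\rho}$, which is exactly what degrades the exponent from $1$ to $1/2$ and explains why the subsequent argument for part (a) needs the extra refinement via the parameter $\tau$ in the definition of $N(x)$.
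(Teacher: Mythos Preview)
Your proof is correct and follows exactly the paper's approach: the paper's one-line proof simply says ``Apply the sub-level estimate \eqref{eq:sublevel2} with $\lambda = \delta$,'' and your argument spells out the implicit normalization step (passing to $z = (x-y)/|x-y| \in S^{2}$ and using $\delta$-separation to ensure $\lambda = \delta/|x-y| \in (0,1]$) that makes this application precise.
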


\begin{proof} Apply the sub-level estimate \eqref{eq:sublevel2} with $\lambda = \delta$. \end{proof}

\begin{lemma}\label{improvement} Suppose that $0 \leq \tau < 1$, and $x,y \in \R^{3}$ satisfy
\begin{displaymath} y - x \notin C_{I}^{\rho}(\delta^{\tau}). \end{displaymath}
Then
\begin{displaymath} T_{I}(x,y) \lesssim \delta^{1 - \tau}. \end{displaymath}
\end{lemma}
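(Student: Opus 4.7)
My plan is to bound the larger quantity $|\{\theta \in I : |f(\theta)| \leq \delta\}|$, where $f(\theta) := \rho_{\theta}(y - x) = \gamma(\theta) \cdot (y - x)$; since $T_{I}(x, y)$ is the Lebesgue measure of a subset of this set, any bound will transfer. The core idea is that the hypothesis $y - x \notin C_{I}^{\rho}(\delta^{\tau})$ prevents $f$ from having the near-double-zero behaviour that made the naive estimate of Lemma \ref{universalA} sharp, and instead forces $f$ to look like a function with only a first-order zero of slope at least $\sim \delta^{\tau}$, to which we can apply the sub-level estimate \eqref{eq:sublevel} with $k = 1$.

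First, I would translate the geometric hypothesis into an analytic one. Using that $\{\gamma(\theta), \dot{\gamma}(\theta)/|\dot{\gamma}(\theta)|, \eta(\theta)\}$ is an orthonormal basis of $\R^{3}$ (since $\gamma \perp \dot{\gamma}$ on $S^{2}$ and $\eta$ is the unit cross product), the squared distance from $y - x$ to the bad line $b_{\theta} = \spa(\eta(\theta))$ equals $f(\theta)^{2} + f'(\theta)^{2}/|\dot{\gamma}(\theta)|^{2}$. Since $|\dot{\gamma}|$ is bounded above and below on $I$ (by \eqref{eq:curve_cond} and compactness), the hypothesis $y - x \notin C_{I}^{\rho}(\delta^{\tau})$ yields a constant $c = c(\gamma, I) > 0$ with
\begin{displaymath} \max\{|f(\theta)|, |f'(\theta)|\} \geq c\delta^{\tau}, \qquad \theta \in I. \end{displaymath}

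Second, I would exploit the resulting dichotomy. On the set $S := \{\theta \in I : |f(\theta)| \leq \delta\}$, we have $|f(\theta)| \leq \delta < c\delta^{\tau}$ for all sufficiently small $\delta$ (using $\tau < 1$), so the first option above must fail, forcing $|f'(\theta)| \geq c\delta^{\tau}$. In fact the same lower bound holds on the larger open set $H := \{|f| < c\delta^{\tau}\}$. Consequently $f$ is strictly monotone on every connected component of $H$, and the preimage of $[-\delta, \delta]$ in each component is a single interval of length at most $2\delta/(c\delta^{\tau}) \lesssim \delta^{1 - \tau}$.

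The crux is to bound the number of connected components of $H$. Consider a component $(a, b) \subset H$ which does not touch $\partial I$, so that by maximality $|f(a)| = |f(b)| = c\delta^{\tau}$. If this component contained no zero of $f$, then $f$ would have constant sign on $[a, b]$, so $|f|$ would attain a strict interior minimum at some $\theta^{\star} \in (a, b)$; at such a point $f'(\theta^{\star}) = 0$, contradicting $|f'| \geq c\delta^{\tau}$ throughout $H$. Hence every interior component of $H$ contains a zero of $f$, and by Lemma \ref{l:no_zeros} the function $f$ has only $O_{\gamma}(1)$ zeros on $I$. Counting also the at most two components that touch the endpoints of $I$, the total number of components is bounded by a constant depending only on $\gamma$ and $I$. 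Summing the length bounds then gives $T_{I}(x, y) \leq |S| \lesssim \delta^{1 - \tau}$, as claimed.

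The main obstacle I anticipate is precisely this component-counting step: a priori the sub-level set $\{|f| \leq \delta\}$ could have many components, and since we are not given a non-degeneracy condition for $\dot{\gamma}$, we cannot directly bound zeros of $f'$. The trick is to circumvent $f'$ altogether and count components of the slightly enlarged set $H$ using only zeros of $f$, leveraging $|f'| \geq c\delta^{\tau}$ on $H$ to rule out spurious components via the critical-point argument above.
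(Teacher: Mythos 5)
Your proof is correct and takes essentially the same route as the paper's: both translate the hypothesis $y-x \notin C_{I}^{\rho}(\delta^{\tau})$ into the implication $|f(\theta)| \leq 2\delta \Rightarrow |f'(\theta)| \gtrsim \delta^{\tau}$ via the orthogonal decomposition of the distance to $b_{\theta}$, and then bound the sub-level set by $O(1)$ intervals of length $\lesssim \delta^{1-\tau}$ using the zero count from Lemma \ref{l:no_zeros}. Your interior-minimum argument for why every interior component of the enlarged sub-level set must contain a zero of $f$ makes rigorous a component-counting step that the paper states without detailed justification.
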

\begin{proof} The condition $y - x \notin C_{I}^{\rho}(\delta^{\tau})$ is another way of saying that $d(y - x, b_{\theta}) \geq \delta^{\tau}$ for all bad lines $b_{\theta} = \spa\{\gamma(\theta) \times \dot{\gamma}(\theta)\} \subset C_{I}^{\rho}$. Now, note that the distance of a vector $z$ from $b_{\theta}$ equals the length of the projection $\tilde{\pi}_{\theta}(z)$ of $z$ onto the plane $b_{\theta}^{\perp} = \spa(\{\gamma(\theta),\dot{\gamma}(\theta)\})$. Hence
\begin{displaymath} \left[((x - y) \cdot \gamma(\theta))^{2} + \left((x - y) \cdot \frac{\dot{\gamma}(\theta)}{|\dot{\gamma}(\theta)|}\right)^{2}\right]^{1/2} = |\tilde{\pi}_{\theta}(x - y)| \geq \delta^{\tau}. \end{displaymath}
Since $|\dot{\gamma}(\theta)|$ is bounded from below on $I$, and $\tau < 1$, we may infer that
\begin{displaymath} |\rho_{\theta}(x - y)| \leq 2\delta \quad \Longrightarrow \quad |\partial_{\theta} \rho_{\theta}(x - y)| \gtrsim \delta^{\tau}. \end{displaymath}
This implies that the set $\{\theta \in I : |\rho_{\theta}(x - y)| <  2\delta\}$ consists of intervals $I_{1},\ldots,I_{N}$ around the zeros of $\theta \mapsto \rho_{\theta}(x - y)$ on $I$, and possibly two intervals having a common endpoint with $I$. We saw in Lemma \ref{l:no_zeros} that the number of zeros of $\theta \mapsto \rho_{\theta}(x - y)$, $x \neq y$, on any compact subinterval of $U$ is bounded by a constant independent of $x - y$. So, in order to estimate the length of $\{\theta \in I : |\rho_{\theta}(x - y)| < 2\delta\}$ -- and the cardinality of $T_{I}(x,y)$ -- it suffices to bound the lengths of the intervals $I_{i}$. But the lower bound of the derivative $\partial_{\theta} \rho_{\theta}(x - y)$ readily shows that
\begin{displaymath} |I_{i}| \lesssim \delta^{1 - \tau}, \end{displaymath}
which completes the proof of the lemma. \end{proof}

Next, as in the proof of Theorem \ref{mainDiscrete}(b), we claim that the lower bound \eqref{form6} forces a dichotomy: either $\varepsilon_{1}$ and $\sigma_{1}$ are large, or there exists a neighbourhood $N(x)$ with cardinality $|N(x)| \geq \delta^{-s + \varepsilon_{I}}$. Here
\begin{displaymath} \varepsilon_{I} = \frac{s(2s - 1)}{12s^{2} + 4s - 1} - \kappa, \end{displaymath}
where $\kappa > 0$ is arbitrary (but so small that $\varepsilon_{I} > 0$). Let us first estimate $\cE$ from above, \textbf{assuming} $|N(x)| \leq \delta^{-s + \varepsilon_{I}}$ for every $x \in P$:
\begin{displaymath} \cE = \sum_{x \in P} \sum_{y \in N(x)} T_{I}(x,y) + \sum_{x \in P} \sum_{y \in P \setminus N(x)} T_{I}(x,y) =: S_{1} + S_{2}. \end{displaymath}
The sum $S_{1}$ is bounded using the universal bound in Lemma \ref{universalA}, combined with the size estimate for $|N(x)|$:
\begin{align*} S_{1} & \lesssim \delta^{1/2} \sum_{x \in P} \sum_{\delta \leq 2^{j} \leq 1} 2^{-j/2} \min\{|N(x)|, |P \cap B(x,2^{j + 1})|\}\\
& \lesssim \delta^{1/2} \sum_{x \in P} \sum_{\delta \leq 2^{j} \leq 1} 2^{-j/2} \min\left\{\delta^{-s + \varepsilon_{I}}, \left(\frac{2^{j}}{\delta}\right)^{s}\right\}\\
& \leq \sum_{x \in P} \sum_{\delta \leq 2^{j} \leq 1} \delta^{(-s + \varepsilon_{I})(1 - \tfrac{1}{2s})} \sim \delta^{-2s + \varepsilon_{I}(1 - \tfrac{1}{2s}) + 1/2} \cdot \log\left(\frac{1}{\delta}\right). \end{align*}
To estimate $S_{2}$, we set
\begin{displaymath} \tau = 1/2 - (\varepsilon_{I} + \varepsilon_{J})(1 - \tfrac{1}{2s}) > 0, \end{displaymath}
where
\begin{displaymath} \varepsilon_{J} := \frac{4s^{2}}{12s^{2} + 4s - 1} > 0, \end{displaymath}
and apply Lemma \ref{improvement} with this particular choice of $\tau$:
\begin{displaymath} S_{2} \leq \sum_{x \in P} \sum_{y \in P \setminus N(x)} \delta^{1 - \tau} \lesssim \delta^{-2s + (\varepsilon_{I} + \varepsilon_{J})(1 - \tfrac{1}{2s}) + 1/2}. \end{displaymath}
With our choices of parameters, we see that $\cE = S_{1} + S_{2} \lesssim \delta^{-2s + \varepsilon_{I}(1 - \tfrac{1}{2s}) + 1/2} \cdot \log\left(\frac{1}{\delta}\right)$. Comparing this upper bound with \eqref{form6}, we conclude that one of the following options must hold:
\begin{itemize}
\item[(i)] There exists $x \in P$ with $|N(x)| \geq \delta^{-s + \varepsilon_{I}}$, or
\item[(ii)] $\sigma_{1} + \varepsilon_{1} \geq 1/2 + \varepsilon_{I}(1 - \tfrac{1}{2s})$.
\end{itemize}
Indeed, we obtained (ii) by assuming on the previous page that (i) fails for every $x \in P$. Now, (ii) would directly lead to a lower bound for $\sigma_{1}$ (we will work out the numbers soon), so (i) is the "hard" case. Thus, we momentarily assume that (i) holds and see where we end up. It is time to start using the information about the size of the projections $\rho_{\theta}(P(\delta))$, $\theta \in J$. Write $\tilde{P} := N(x)$, where $|N(x)| \geq \delta^{-s + \varepsilon_{I}}$. Since $\tilde{P} \subset P$, we know that $|\rho_{\theta}(\tilde{P}(\delta))| \leq \delta^{1 - \sigma_{2}}$ for $\theta \in E_{J}$. Consequently, if we set
\begin{displaymath} T_{J} := |\{\theta \in E_{J} : x \sim_{\theta} y\}| \end{displaymath}
and define
\begin{displaymath} \cE_{J} := \sum_{x,y \in \tilde{P}} T_{J}(x,y), \end{displaymath}
the same argument that gave \eqref{form6} yields the lower bound
\begin{equation}\label{form7} \cE_{J} \gtrsim \delta^{\sigma_{2} + \varepsilon_{2} + 2\varepsilon_{I} - 2s}. \end{equation}
With this in mind, we set hunting for a large neighbourhood
\begin{displaymath} N_{J}(y) := \tilde{P} \cap (y + C_{J}^{\rho}(\delta^{\tau})), \quad y \in \tilde{P}. \end{displaymath}
The parameter $\tau > 0$ is the same as before. If all such neighbourhoods have size $|N_{J}(y)| \leq \delta^{-s + \varepsilon_{I} + \varepsilon_{J}}$, precisely the same argument as above shows that
\begin{equation}\label{form8} \cE_{J} \lesssim \delta^{-2s + (\varepsilon_{I} + \varepsilon_{J})(1 - \tfrac{1}{2s}) + 1/2} \cdot \log\left(\frac{1}{\delta}\right). \end{equation}
Indeed, one only needs to observe that the bounds for $T_{I}(x,y)$ in Lemmas \ref{universalA} and \ref{improvement} transfer without change to bounds for $T_{J}(x,y)$. Comparing \eqref{form7} and \eqref{form8}, we arrive at a familiar alternative:
\begin{itemize}
\item[(i')] There exists $y \in \tilde{P}$ with $|N_{J}(y)| \geq \delta^{-s + \varepsilon_{I} + \varepsilon_{J}}$, or
\item[(ii')] $\sigma_{1} + \varepsilon_{1} \geq 1/2 + (\varepsilon_{I} + \varepsilon_{J})(1 - \tfrac{1}{2s}) - 2\varepsilon_{I}$.
\end{itemize}
Now the proof is nearly complete. The next step is to show that (i) and (i') are mutually incompatible with our choices of $\varepsilon_{I}$ and $\varepsilon_{J}$. Consequently, from our alternatives, we will see that either (ii) holds, or (i) \textbf{and} (ii') hold. Both options will lead to a lower bound for $\sigma_{1}$.

To establish the incompatibility of (i) and (i'), we apply Lemma \ref{geometry} with $\tau$ and the non-parallel cones $C_{I}^{\rho}, C_{J}^{\rho}$. Let $C \gtrsim_{\gamma,I,J} 1$ be a constant, which appears in the lemma with these parameters. Assuming (i) and (i'), recalling the formulae for $\varepsilon_{I}$ and $\varepsilon_{J}$, and using the fact that $N_{J}(y)$ is a $(\delta,s)$-set, we have
\begin{align*} |N_{J}(y) \cap B(y,C\delta^{\tau})| & \lesssim_{C} \left(\frac{\delta^{\tau}}{\delta}\right)^{s} = \delta^{(\tau - 1)s}\\
& = \delta^{-s/2 - (\varepsilon_{I}  + \varepsilon_{J})(s - 1/2)}\\
& = \delta^{\kappa(s + 1/2)} \cdot \delta^{-s + \varepsilon_{I} + \varepsilon_{J}}\\
& \leq \delta^{\kappa(s + 1/2)} \cdot |N_{J}(y)|. \end{align*}
This shows that no matter how large $C$ is, for small enough $\delta > 0$ the set $N_{J}(y)$ cannot be contained in the ball $B(y,C\delta^{\tau})$. So, if (i) and (i') hold, and $\delta > 0$ is small enough, we can find a point
\begin{displaymath} y' \in N_{J}(y) \subset \tilde{P} \subset x + C_{I}^{\rho}(\delta^{\tau}) \end{displaymath}
with
\begin{equation}\label{form9} |y - y'| > C\delta^{\tau}. \end{equation}
We infer from Lemma \ref{geometry} that
\begin{equation}\label{form10} \left|\frac{y - y'}{|y - y'|} - \xi\right| \gtrsim_{\gamma,I,J} 1, \qquad \xi \in C_{J}^{\rho} \cap S^{2}. \end{equation}
On the other hand, we know that $y' \in N_{J}(y) \subset y + C_{J}^{\rho}(\delta^{\tau})$, so there is a line $b_{\theta} \subset C_{J}^{\rho}$, $\theta \in J$, such that $d(y - y',b_{\theta}) \leq \delta^{\tau}$. If $b_{\theta}$ is spanned by the unit vector $\xi \in C_{J}^{\rho} \cap S^{2}$, one can combine \eqref{form9} with elementary geometry to show that
\begin{displaymath} \left|\frac{y - y'}{|y - y'|} - \xi\right| \lesssim \frac{1}{C}, \end{displaymath}
as long as $C \leq \delta^{-\tau}$. This is incompatible with \eqref{form10}, if $C$ is large enough (still depending only on $\gamma$, $I$ and $J$). We have established that (i) and (i') cannot hold simultaneously. Thus, if (i) holds, we may infer that also (ii') holds, so $\sigma_{1} + \varepsilon_{1}$ must satisfy the lower bound
\begin{displaymath} \sigma_{1} + \varepsilon_{1} \geq \frac{1}{2} + (\varepsilon_{I} + \varepsilon_{J})\left(1 - \frac{1}{2s}\right) - 2\varepsilon_{I} = \frac{1}{2} + \frac{1}{2} \cdot \frac{(2s - 1)^{2}}{12s^{2} + 4s - 1} + \kappa\left(1 + \frac{1}{2s}\right) \end{displaymath} But if (i) fails, we know that (ii) holds, and then
\begin{displaymath} \sigma_{1} + \varepsilon_{1} \geq \frac{1}{2} + \varepsilon_{I}\left(1 - \frac{1}{2s}\right) = \frac{1}{2} + \frac{1}{2} \cdot \frac{(2s - 1)^{2}}{12s^{2} + 4s - 1} - \kappa\left(1 - \frac{1}{2s}\right). \end{displaymath}
Either way, making $\kappa > 0$ small, we may choose $\varepsilon_{1}(s) > 0$ and $\sigma_{1}(s) > 1/2$ as in Theorem \ref{mainDiscrete}(a). This completes the proof.
\end{proof}

\section{Sets with additional structure}

The dimension estimates obtained up to now for  the projections of arbitrary sets $B\subset \mathbb{R}^3$  onto a non-degenerate family of lines in $\mathbb{R}^3$ are far from the optimal bound suggested by Conjecture \ref{dimensionConservation}. In this section, we restrict ourselves to a special class of sets $B$, for which we are able to prove stronger results -- and indeed resolve part of Conjecture \ref{dimensionConservation}. The section has two parts. In the first one, we introduce \emph{BLP sets}, a class of sets satisfying a strong structural hypothesis, and, in Theorem \ref{main2_BLP}, we obtain sharp dimension estimates for such sets and non-degenerate families of projections onto lines. In the second part, we demonstrate that self-similar sets without rotations are BLP sets. Combined with Theorem \ref{main2_BLP}, this fact yields a Marstrand type theorem for self similar sets: Theorem \ref{main2}.

\subsection{BLP sets}

We set off with two definitions.
\begin{definition}[BLP sets] A set $B \subset \R^{3}$ has the \emph{bi-Lipschitz property}, BLP in short, if for any plane $V \in G(3,2)$ and $\varepsilon > 0$ there exists a subset $B_{V,\varepsilon} \subset B$ such that
\begin{itemize}
\item $\Hd B_{V,\varepsilon} \geq \Hd \pi_{V}(B) - \varepsilon$, and
\item the restriction $\pi_{V}|_{B_{V,\varepsilon}} \colon B_{V,\varepsilon} \to V$ is bi-Lipschitz.
\end{itemize}
\end{definition}

\begin{definition} Let $\ell \in G(3,1)$. A set $B \subset \R^{3}$ \emph{stays non-tangentially off the line $\ell$} ($B \angle \ell$ for short) if there exists $0 < \alpha < 1$ such that
\begin{displaymath}
X(0,\ell,\alpha)\cap (B - B)=\emptyset,
\end{displaymath}
where
\begin{displaymath} X(y,\ell,\alpha):=\{x\in \mathbb{R}^3:\; d(x - y,\ell) < \alpha |x - y|\} \end{displaymath}
is a cone with opening angle $\alpha$ around $\ell$ centered at $y \in \R^{3}$.
\end{definition}

It will be useful to have various reformulations of this property at our disposal. We summarise them in the subsequent lemma, but omit the straightforward proof.

\begin{lemma}\label{l:equiv_snto}
Let $B\subset \mathbb{R}^3$ and $\ell \in G(3,1)$. The following properties are equivalent:
\begin{enumerate}
\item\label{i} $B \angle \ell$.
\item\label{ii} There exists $0 < \alpha < 1$ such that for all $y\in B$ we have $X(y,\ell,\alpha)\cap B =\emptyset$.
\item\label{iii} The projection $\pi_{V}|_{B}$ onto the plane $V=\ell^{\bot}$
is bi-Lipschitz with the constant $\alpha$ from the definition of $B \angle \ell$.
\end{enumerate}
\end{lemma}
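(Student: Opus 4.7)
The proof will be essentially a direct unpacking of definitions, exploiting two elementary geometric facts: cones are translation-equivariant, and the distance of a vector from a line equals the length of its projection onto the orthogonal plane.

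The plan is to first establish (\ref{i}) $\Leftrightarrow$ (\ref{ii}). Observe that the cone satisfies $X(y,\ell,\alpha) = y + X(0,\ell,\alpha)$, since $x \in X(y,\ell,\alpha)$ iff $d(x-y,\ell) < \alpha|x-y|$ iff $x-y \in X(0,\ell,\alpha)$. Consequently, the condition ``$X(y,\ell,\alpha) \cap B = \emptyset$ for every $y \in B$'' says precisely that no difference $x - y$ with $x,y \in B$ lies in $X(0,\ell,\alpha)$, which is the statement $X(0,\ell,\alpha) \cap (B - B) = \emptyset$. Both implications between (\ref{i}) and (\ref{ii}) follow immediately, with the same constant $\alpha$.

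Next I would prove (\ref{i}) $\Leftrightarrow$ (\ref{iii}). The key identity is that for $V = \ell^\perp$ and any $z \in \R^3$ one has $d(z,\ell) = |\pi_V(z)|$, together with the Pythagorean relation $|\pi_V(z)|^2 + |\pi_\ell(z)|^2 = |z|^2$. Applied to $z = x - y$ with $x,y \in B$, the negation of $x - y \in X(0,\ell,\alpha)$ becomes
\begin{displaymath}
|\pi_V(x) - \pi_V(y)| = |\pi_V(x-y)| \geq \alpha|x - y|.
\end{displaymath}
Combined with the trivial upper bound $|\pi_V(x) - \pi_V(y)| \leq |x - y|$ (since $\pi_V$ is $1$-Lipschitz), this is exactly the assertion that $\pi_V|_B$ is bi-Lipschitz with constant $\alpha$ in the sense of (\ref{iii}). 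Running the argument in reverse gives the converse.

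I do not anticipate any real obstacle: the lemma is a purely algebraic reformulation, and the only point requiring any care is to be consistent about the fact that the cone $X(0,\ell,\alpha)$ uses a \emph{strict} inequality, so that avoiding the cone corresponds to the \emph{non-strict} lower bound $|\pi_V(x-y)| \geq \alpha|x-y|$ — which is what one wants for a bi-Lipschitz estimate valid for all pairs in $B$.
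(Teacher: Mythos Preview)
Your proof is correct. The paper actually states Lemma~\ref{l:equiv_snto} without proof, treating the equivalences as immediate from the definitions; your write-up supplies exactly the routine verification one would expect, and your attention to the strict/non-strict inequality in the cone definition is the only point worth flagging.
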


Let us now return to the projection family $(\rho_{\theta})_{\theta \in U}$.
The point of the definitions above is here: if $B \subset \R^{3}$ is a BLP set, $\theta_{0} \in U$ and $\varepsilon > 0$, one may find a subset $B_{\theta_{0},\varepsilon} \subset B$ such that $\Hd B_{\theta_{0},\varepsilon} \geq \Hd \pi_{\theta_{0}}(B) - \varepsilon$ and $B_{\theta_{0},\varepsilon} \angle b_{\theta_{0}}$, where $b_{\theta_0} \in G(3,1)$ is the `bad line' spanned by the vector $\gamma(\theta_0) \times \dot{\gamma}(\theta_0)$ and $\pi_{\theta_{0}}$ is the projection onto $b_{\theta_{0}}^{\perp}$. The next proposition explains why this is useful:

\begin{proposition}\label{p:2}
Let $B \subset \mathbb{R}^3$ be a set such that $B\angle b_{\theta_0}$ for some $\theta_0\in U$. Then there exists an open interval $J \ni \theta_{0}$ such that the family $(\rho_{\theta}|_B)_{\theta \in J}$ is transversal in the sense of Peres and Schlag, see \cite[Definition 2.7]{PSc}.
\end{proposition}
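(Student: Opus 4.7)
My plan is to recast the Peres--Schlag transversality condition for the family $(\rho_\theta|_B)$ as a statement about unit vectors $e\in S^2$ arising as normalised differences $(x-y)/|x-y|$ with $x,y\in B$ distinct. Writing $\Phi_\theta(x,y) := \rho_\theta(x-y)/|x-y| = \gamma(\theta)\cdot e$, one has $\partial_\theta\Phi_\theta(x,y) = \dot\gamma(\theta)\cdot e$, so transversality reduces to producing $\beta>0$ and an open interval $J\ni\theta_0$ such that, for all $\theta\in J$ and all admissible $e$,
$$|\gamma(\theta)\cdot e|\leq \beta \quad \Longrightarrow \quad |\dot\gamma(\theta)\cdot e|\geq \beta.$$

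The first step will be to exploit the hypothesis $B\angle b_{\theta_0}$: setting $\eta_0 := (\gamma(\theta_0)\times\dot\gamma(\theta_0))/|\gamma(\theta_0)\times\dot\gamma(\theta_0)|$, Lemma \ref{l:equiv_snto} supplies $\alpha\in(0,1)$ so that every relevant $e$ lies in the compact set
$$K_\alpha := \{e\in S^2 : |e\cdot\eta_0|\leq \sqrt{1-\alpha^2}\},$$
which is bounded away from $\pm\eta_0$. The key geometric observation I would next invoke is that for $e\in S^2$ the simultaneous vanishing $\gamma(\theta_0)\cdot e = 0 = \dot\gamma(\theta_0)\cdot e$ forces $e=\pm\eta_0$, and hence $e\notin K_\alpha$. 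Consequently the continuous function
$$F(\theta,e) := |\gamma(\theta)\cdot e|^2 + |\dot\gamma(\theta)\cdot e|^2$$
is strictly positive on $\{\theta_0\}\times K_\alpha$, and by compactness is bounded below there by some $2m>0$. A standard compactness/continuity argument on $U\times S^2$ then extends the lower bound $F(\theta,e)\geq m$ to all $(\theta,e)\in J\times K_\alpha$ for some open interval $J\ni\theta_0$. Setting $\beta := \sqrt{m/2}$, the required implication is immediate: if $|\gamma(\theta)\cdot e|<\beta$ with $\theta\in J$ and $e\in K_\alpha$, then $|\dot\gamma(\theta)\cdot e|^2\geq m-\beta^2\geq \beta^2$.

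I do not expect any serious obstacle in carrying this out. The $\mathcal{C}^3$-regularity of $\gamma$ furnishes, for free and uniformly in $e\in S^2$, the bounds on $\Phi_\theta$ and its $\theta$-derivatives built into \cite[Definition 2.7]{PSc}. The only point requiring mild care is the matching of the direction $\eta_0$ spanning $b_{\theta_0}$ with the exact locus on $S^2$ where the pair $(\gamma(\theta_0)\cdot e,\dot\gamma(\theta_0)\cdot e)$ degenerates. This is precisely what the definition of the bad line achieves, so the non-tangentiality hypothesis $B\angle b_{\theta_0}$ removes exactly the directions in which first-order transversality would break down, and the whole argument pivots on this coincidence.
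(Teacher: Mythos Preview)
Your proposal is correct and follows essentially the same approach as the paper. The paper phrases the extension to a neighbourhood of $\theta_0$ geometrically (first observing that $B\angle b_\theta$ persists for $\theta$ near $\theta_0$ with constant $\alpha/2$, then reading off the bi-Lipschitz bound $|\pi_{b_\theta^\perp}(x-y)|\geq \tfrac{\alpha}{2}|x-y|$, which is exactly your lower bound on $F(\theta,e)$), whereas you keep $K_\alpha$ fixed at $\theta_0$ and push the continuity through $F$; these are cosmetic variants of the same compactness/continuity argument.
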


\begin{remark}\label{PeresAndSchlag} It would be unnecessarily cumbersome to recount here the full details of Peres and Schlag's framework. So, for the benefit of readers unfamiliar with their definitions, we simply remark that Peres and Schlag's paper deals with \emph{generalised projections} -- parametrised families of continuous mappings from a compact space $\Omega$ to $\R^{k}$, satisfying certain properties. The essence of these properties is that they axiomatise those features of usual orthogonal projections which are needed for the proofs of the classical dimension conservation results of Marstrand and Mattila. Consequently, an analogous dimension conservation theorem holds for all families of generalised projections: in particular, the generalised projections preserve, for almost all parameters, the dimension of any at most $k$-dimensional Borel set in $\Omega$. We wish to use this fact in the proof of Theorem \ref{main2_BLP} below, so, prior to the proof, we need to check that a certain family of projections, namely $(\rho_{\theta}|_{B})_{\theta \in J}$ satisfies the axioms of the generalised projections. In this case, the task boils down to proving \eqref{transversality} below. \end{remark}

\begin{proof}[Proof of Proposition \ref{p:2}] Staying `non-tangentially off a line' is an open property in the following sense: if there exists $0<\alpha <1$ such that $(B-B)\cap X(0,b_{\theta_0},\alpha )=\emptyset$, as we assume, then
 $(B-B)\cap X(0,b_{\theta},\alpha/2)=\emptyset$ for $\theta$ in a small neighbourhood $J \subset U$ of $\theta_{0}$. Now, let $\theta \in J$, and consider the projection $\pi_{\theta}$ onto the plane $V_{\theta} = b_{\theta}^{\perp}$. According to Lemma \ref{l:equiv_snto}, the restriction $\pi_{\theta}|_{B}$ is bi-Lipschitz with constant $\alpha/2$, which means that
 \begin{displaymath}
\left[\left( (x-y)\cdot\gamma(\theta)\right)^2 + \left( (x-y)\cdot\frac{\dot{\gamma}(\theta)}{|\dot{\gamma}(\theta)|} \right)^2\right]^{1/2}=|\pi_{\theta}(x-y)| \geq \tfrac{\alpha}{2}|x-y|
\end{displaymath}
for all $x,y \in B$. Taking $J$ short enough, the quantity $|\dot{\gamma}(\theta)|$ is bounded from below by a constant $c > 0$ for $\theta \in J$. Thus, either
\begin{equation}\label{transversality}
\left|\rho_{\theta}\left(\tfrac{x-y}{|x-y|}\right)\right|\geq \tfrac{\alpha}{5}\quad \text{or}\quad \left|{\partial_{\theta}}\rho_{\theta}\left(\tfrac{x-y}{|x-y|}\right)\right|\geq \tfrac{c\alpha}{5}.\end{equation}
for all $x,y \in B$, $x \neq y$. This means that $J$ is an interval of transversality of order $\beta=0$ for the projection family $(\rho_{\theta}|_{B})_{\theta \in J}$, in the sense \cite[Definition 2.7]{PSc}.
\end{proof}

Now we are prepared to prove the analogue of Theorem \ref{main2} for BLP sets.

\begin{thm}\label{main2_BLP} Let $B \subset \R^{3}$ be a BLP set, and let $(\rho_{\theta})_{\theta \in U}$ be a non-degenerate family of projections in the sense of Definition \ref{projections}.
\begin{itemize}
\item[(a)] If $0 \leq \Hd B \leq 1$, then $\Hd \rho_{\theta}(B) = \Hd B$ almost surely.
\item[(b)] If $\Hd B > 1$, and additionally
\begin{equation}\label{additionalAssumption} \Pd \pi_{V}(B) = \Hd \pi_{V}(B) \end{equation}
for every plane $V \in G(3,2)$, then $\rho_{\theta}(B)$ has positive length almost surely.
\end{itemize}
\end{thm}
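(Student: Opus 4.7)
The plan is to reduce both parts to the transversality theory of Peres and Schlag via Proposition \ref{p:2}, using the BLP hypothesis to produce, for each direction $\theta_0$, a large subset of $B$ that stays non-tangentially off the bad line $b_{\theta_0}$. The key preliminary observation is that by Lemma \ref{etaNonDeg} the curve $\eta = \gamma \times \dot{\gamma}/|\gamma \times \dot{\gamma}|$ is itself non-degenerate, so the bad lines $b_{\theta} = \spa(\eta(\theta))$ form a non-degenerate family of lines, and the planes $W_{\theta} := b_{\theta}^{\perp}$ form a non-degenerate family of planes. Writing $\tilde{\pi}_{\theta}$ for the orthogonal projection onto $W_{\theta}$, the family $(\tilde{\pi}_{\theta})_{\theta \in U}$ is non-degenerate in the sense of Definition \ref{projections}, so Proposition \ref{prop1}(b) and Theorem \ref{main1}(b) both apply to it.

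Fix $\theta_0 \in U$ and $\varepsilon > 0$. Applying the BLP hypothesis to the plane $V = W_{\theta_0}$ produces a subset $B_{\theta_0,\varepsilon} \subset B$ with $\Hd B_{\theta_0,\varepsilon} \geq \Hd \tilde{\pi}_{\theta_0}(B) - \varepsilon$ such that $\tilde{\pi}_{\theta_0}|_{B_{\theta_0,\varepsilon}}$ is bi-Lipschitz. By Lemma \ref{l:equiv_snto}, this bi-Lipschitz property is equivalent to $B_{\theta_0,\varepsilon} \angle b_{\theta_0}$. Proposition \ref{p:2} then furnishes an open interval $J = J(\theta_0) \ni \theta_0$ on which $(\rho_{\theta}|_{B_{\theta_0,\varepsilon}})_{\theta \in J}$ is transversal in the sense of Peres and Schlag. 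The classical Marstrand-type conclusions for transversal families then apply: if $\Hd B_{\theta_0,\varepsilon} \leq 1$, then $\Hd \rho_{\theta}(B_{\theta_0,\varepsilon}) = \Hd B_{\theta_0,\varepsilon}$ for almost every $\theta \in J$, and if $\Hd B_{\theta_0,\varepsilon} > 1$, then $\rho_{\theta}(B_{\theta_0,\varepsilon})$ has positive length for almost every $\theta \in J$.

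It remains to control $\Hd \tilde{\pi}_{\theta_0}(B)$ for typical $\theta_0$ and then to globalise. For part (a), since $\Hd B \leq 1$, Proposition \ref{prop1}(b) applied to the non-degenerate family $(\tilde{\pi}_{\theta})$ gives $\Hd \tilde{\pi}_{\theta_0}(B) = \Hd B$ for almost every $\theta_0 \in U$; thus $\Hd \rho_{\theta}(B) \geq \Hd B - \varepsilon$ for almost every $\theta \in J(\theta_0)$. For part (b), Theorem \ref{main1}(b) applied to $(\tilde{\pi}_{\theta})$ gives $\Pd \tilde{\pi}_{\theta_0}(B) \geq \sigma_{2} > 1$ almost surely; combined with the hypothesis \eqref{additionalAssumption}, this upgrades to $\Hd \tilde{\pi}_{\theta_0}(B) \geq \sigma_{2} > 1$. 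Choosing $\varepsilon < \sigma_{2} - 1$ forces $\Hd B_{\theta_0,\varepsilon} > 1$, so $\rho_{\theta}(B_{\theta_0,\varepsilon})$ has positive length for almost every $\theta \in J(\theta_0)$.

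Finally, I globalise by a Lindel\"of argument. The open intervals $\{J(\theta_0)\}$, as $\theta_0$ ranges over a full-measure subset of $U$, cover a full-measure open subset of $U$, hence admit a countable subcover $\{J(\theta_n)\}$; the union of countably many null exceptional sets remains null. In part (a) one then lets $\varepsilon \to 0$ along a sequence, and combines with the trivial Lipschitz upper bound $\Hd \rho_{\theta}(B) \leq \Hd B$. The only non-routine step is the identification of the correct non-degenerate family of target planes, namely $(W_{\theta})$ rather than $(V_{\theta})$, so that BLP delivers subsets non-tangential precisely to the bad lines of $(\rho_{\theta})$; once that is arranged via Lemma \ref{etaNonDeg}, the rest of the proof is a clean synthesis of Proposition \ref{p:2}, the Peres--Schlag transversality machinery, Proposition \ref{prop1}(b), and Theorem \ref{main1}(b).
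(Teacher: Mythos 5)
Your proposal is correct and follows essentially the same route as the paper's own proof: Lemma \ref{etaNonDeg} to make the bad-line family $(b_{\theta})$ non-degenerate, Proposition \ref{prop1}(b) (resp.\ Theorem \ref{main1}(b) plus \eqref{additionalAssumption}) applied to the projections onto the planes $b_{\theta}^{\perp}$, the BLP property and Lemma \ref{l:equiv_snto} to extract a large subset with $B_{\theta_0,\varepsilon} \angle b_{\theta_0}$, Proposition \ref{p:2} for transversality, and the Peres--Schlag theorem on each local interval $J(\theta_0)$. Your explicit Lindel\"of globalisation and the careful bookkeeping of $\Hd \tilde{\pi}_{\theta_0}(B)$ versus $\Hd B$ only make precise steps the paper treats more briskly.
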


\begin{proof}
We start with (a). According to Lemma \ref{etaNonDeg}, the family of lines $(b_{\theta})_{\theta \in U}$ is a non-degenerate one. Using Proposition \ref{prop1}(b), we see that
\begin{equation}\label{form12} \Hd \pi_{\theta}(B) = \Hd B \end{equation}
for almost every $\theta \in U$, where $\pi_{\theta}$ refers to the projection onto the plane $V_{\theta} = b_{\theta}^{\perp}$. Let $\theta_0 \in U$ be one of the parameters for which \eqref{form12} holds, and fix $\varepsilon > 0$. Since $B$ is a BLP set, we may choose a subset $B_{\theta_{0},\varepsilon} \subset B$ such that $\Hd B_{\theta_{0},\varepsilon}\geq \Hd B - \varepsilon$ and $B_{\theta_{0},\varepsilon}\angle b_{\theta_{0}}$. Then, we infer from Proposition \ref{p:2} that there exists a small interval $J \subset U$ containing $\theta_{0}$ such that the projections $(\rho_{\theta})_{\theta \in J}$ restricted to $B_{\theta_{0},\varepsilon}$ are transversal. It follows from \cite[Theorem 2.8]{PSc} that
\begin{displaymath} \Hd \rho_{\theta}(B) \geq \Hd \rho_{\theta}(B_{\theta_{0},\varepsilon}) = \Hd B_{\theta_{0},\varepsilon} \geq \Hd B -\varepsilon \end{displaymath}
for almost every $\theta\in J$. Since \eqref{form12} holds almost surely, we can run the same argument for almost every $\theta_{0} \in U$, proving that $\Hd \rho_{\theta}(B) \geq \Hd B - \varepsilon$ for almost every $\theta \in U$. Letting $\varepsilon \to 0$ concludes the proof of part (a).

The proof of part (b) is similar, except that this time we resort to Theorem \ref{main1}(b) instead of Proposition \ref{prop1}(b). Namely, if $\Hd B > 1$, we infer from Theorem \ref{main1}(b) and the additional assumption \eqref{additionalAssumption} that
\begin{displaymath} \Hd \pi_{\theta}(B) = \Pd \pi_{\theta}(B) > 1 \end{displaymath}
for almost every $\theta \in U$. Then, fixing almost any $\theta_{0} \in U$ and using the BLP property, we find a subset $B_{\theta_{0}} \subset B$ such that $\Hd B_{\theta_{0}} > 1$ and $B_{\theta_{0}} \angle b_{\theta_{0}}$. The rest of the argument is the same a before, applying \cite[Theorem 2.8]{PSc} to the projections $\rho_{\theta}$, which are transversal restricted to the set $B_{\theta_{0}}$.
\end{proof}

Unfortunately, not all sets are BLP sets:

\begin{remark}\label{noStructure} It is easy to construct a compact set $K \subset \R^{3}$ with $\Hd K = 1$ such that
\begin{equation}\label{noStructureEq} \Hd \pi_{V}(K) = 0 \end{equation}
for a countable dense set of subspaces $V \in G(3,2)$. Any such set $K$ has the following property. Let $V_{0} \in G(3,2)$, and let $K_{0}$ be a subset of $K$ such that the restriction $\pi_{V_{0}}|_{K_{0}}$ is bi-Lipschitz. Then $\Hd K_{0} = 0$. Indeed, if $\pi_{V_{0}}|_{K_{0}}$ is bi-Lipschitz, then $\pi_{V}|_{K_{0}}$ is also bi-Lipschitz for all $2$-planes $V$ in a small $G(3,2)$-neighbourhood of $V_{0}$. This means that $\Hd \pi_{V}(K) \geq \Hd \pi_{V}(K_{0}) = \Hd K_{0}$ for all  $2$-planes $V$ in an open subset of $G(3,2)$, and now \eqref{noStructureEq} forces $\Hd K_{0} = 0$.
\end{remark}

\subsection{Self-similar sets} In this section, we prove that self-similar sets without rotations in $\R^{3}$ satisfy the assumptions of Theorem \ref{main2_BLP}. We start by setting some notation. Consider a collection $\{\psi_1,\ldots,\psi_q\}$ of contracting similitudes $\psi_i:\mathbb{R}^3 \to \mathbb{R}^3$. According to a result of Hutchinson \cite{Hu} there exists a unique nonempty compact set $K\subset \mathbb{R}^3$ satisfying $K=\bigcup_{i=1}^q \psi_i(K)$. Such sets $K$ are referred to as \emph{self-similar sets}. If the generating similitudes of $K$ have the form  $\psi_i(x)=r_ix + w_i$ with $0<r_i<1$ and $w_{i} \in \R^{3}$, we call $K$ a \emph{self-similar set without rotations}. The fact that the mappings $\psi_i$ do not involve rotations will be used to guarantee that the projection of $K$ to an arbitrary plane is again self-similar.

%\begin{remark} In the usual terminology, the definition of \emph{self-similar sets without rotations} should still allow for the generating similitudes of $K$ to contain \emph{permutations} and \emph{reflections}. That is, the similitudes $\psi_{j}$ might have the slightly more general form $\psi_{j}(x) = R_{j}r_{j}x + w_{j}$, where $R_{j}$ is a composition of a permutation and a reflection in $\R^{3}$. Theorem \ref{main2} remains valid for such sets $K$: in this situation, simply the first step of the proof is to find a self-similar set $\tilde{K} \subset K$ without rotations \textbf{in our sense}, satisfying $\Hd \tilde{K} \geq \Hd K - \varepsilon$. Such a set $\tilde{K}$ can be found as in the proof of \cite[Lemma 4.2]{Or1}.
%\end{remark}

\begin{proposition}\label{p:3}
Every self-similar set in $\R^{3}$ without rotations is a BLP set.
\end{proposition}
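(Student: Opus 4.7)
The plan is to exploit the absence of rotations in order to realise $\pi_{V}(K)$ as itself a planar self-similar set, and then to locate a subsystem of $K$ on which $\pi_{V}$ is bi-Lipschitz and whose Hausdorff dimension approximates $\Hd \pi_{V}(K)$.

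Fix $V \in G(3,2)$, identified with $\R^{2}$ via an isometry. Since each $\psi_{i}(x) = r_{i}x + w_{i}$ lacks rotation, the commutation
\begin{displaymath} \pi_{V} \circ \psi_{i} = \tilde{\psi}_{i} \circ \pi_{V}, \qquad \tilde{\psi}_{i}(y) := r_{i} y + \pi_{V}(w_{i}), \end{displaymath}
holds, so by uniqueness of invariant sets $\pi_{V}(K)$ is the self-similar set in $V$ generated by the planar IFS $\{\tilde{\psi}_{1}, \ldots, \tilde{\psi}_{q}\}$. The commutation extends to any finite word $\mathbf{i}$, so any choice of words $\mathbf{i}_{1}, \ldots, \mathbf{i}_{m}$ produces parallel sub-IFS $\{\psi_{\mathbf{i}_{j}}\}$ in $\R^{3}$ and $\{\tilde{\psi}_{\mathbf{i}_{j}}\}$ in $V$ with identical contraction ratios.

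Given $\varepsilon > 0$, the crucial step -- and the hardest part of the proof -- is to extract finite words $\mathbf{i}_{1}, \ldots, \mathbf{i}_{m}$ such that the planar subsystem $\{\tilde{\psi}_{\mathbf{i}_{j}}\}$ satisfies the strong separation condition (SSC) in $V$ and has attractor $\tilde{K}'$ with $\Hd \tilde{K}' \geq \Hd \pi_{V}(K) - \varepsilon$. The planar generators may overlap arbitrarily, so this extraction requires a combinatorial/geometric argument that selects, at some high level $n$, a subcollection of the cylinders $\tilde{\psi}_{\mathbf{i}}(\pi_{V}(K))$ whose images are pairwise disjoint in $V$ while still accounting for a similarity dimension within $\varepsilon$ of $\Hd \pi_{V}(K)$. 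This is where the no-rotations hypothesis is essential, as it ensures the planar IFS inherits the commutative structure of homotheties of $V$.

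Granted such a planar subsystem, let $K' \subset K$ be the attractor of the 3D sub-IFS $\{\psi_{\mathbf{i}_{j}}\}$. The SSC lifts automatically to $\R^{3}$: any putative intersection point in $\psi_{\mathbf{i}_{j}}(K') \cap \psi_{\mathbf{i}_{k}}(K')$ with $j \neq k$ would project into $\tilde{\psi}_{\mathbf{i}_{j}}(\tilde{K}') \cap \tilde{\psi}_{\mathbf{i}_{k}}(\tilde{K}')$, contradicting the planar SSC. Hence every point of $K'$ has a unique address in $\{1, \ldots, m\}^{\N}$, and a standard symbolic zoom-in shows $\pi_{V}|_{K'}$ is bi-Lipschitz: for distinct $x, y \in K'$ with longest common address prefix $\mathbf{I}$ of contraction ratio $r_{\mathbf{I}}$, factoring out $\psi_{\mathbf{I}}$ reduces to a pair $x', y' \in K'$ in distinct level-one cylinders, where the planar SSC yields a uniform constant $\delta > 0$ with
\begin{displaymath} \frac{|\pi_{V}(x) - \pi_{V}(y)|}{|x - y|} = \frac{|\pi_{V}(x') - \pi_{V}(y')|}{|x' - y'|} \geq \frac{\delta}{\diam(K')}, \end{displaymath}
the upper bi-Lipschitz bound being automatic. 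Consequently $\Hd K' = \Hd \tilde{K}' \geq \Hd \pi_{V}(K) - \varepsilon$, and $K'$ witnesses the BLP property.
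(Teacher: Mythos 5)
Your overall strategy coincides with the paper's: use the no-rotations commutation $\pi_{V} \circ \psi_{\mathbf{i}} = \tilde{\psi}_{\mathbf{i}} \circ \pi_{V}$ to realise $\pi_{V}(K)$ as a planar self-similar set, extract a strongly separated planar subsystem of nearly full dimension, lift it to a sub-IFS of $K$, and verify that $\pi_{V}$ restricted to the lifted attractor is bi-Lipschitz. The parts you actually carry out are correct: the lifting of the separation condition to $\R^{3}$ by projecting a putative intersection point, and the symbolic zoom-in argument for the lower bi-Lipschitz bound (valid because $\psi_{\mathbf{I}}$ is a homothety, so the ratio $|\pi_{V}(x)-\pi_{V}(y)|/|x-y|$ is invariant under factoring out the common prefix). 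Your zoom-in is a clean, essentially equivalent alternative to the paper's mechanism, which instead phrases bi-Lipschitzness via cone separation of generation-$n$ balls and propagates it by self-similarity (Lemma \ref{l:cone_sep}).

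The genuine gap is the step you yourself flag as ``the crucial step -- and the hardest part of the proof'' and then do not prove: the existence, for every $\varepsilon>0$, of finitely many words $\mathbf{i}_{1},\ldots,\mathbf{i}_{m}$ whose planar images are pairwise disjoint while the associated similarity dimension is within $\varepsilon$ of $\Hd \pi_{V}(K)$. This is a real theorem rather than a routine selection argument: the planar IFS $\{\tilde{\psi}_{j}\}$ may have arbitrarily severe overlaps, and one must show that discarding cylinders at a high iteration level to achieve disjointness costs at most $\varepsilon$ of dimension. The paper does not reprove it either, but it invokes a specific result for exactly this purpose, namely Lemma 3.4 of \cite{Or1}, which yields a sub-self-similar set $K^{V} \subset \pi_{V}(K)$ satisfying the very strong separation condition with $\Hd K^{V} \geq \Hd \pi_{V}(K) - \varepsilon$ and with generation-one balls forming a disjoint subcollection of the projected generation-$n$ balls. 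As written, your proof is incomplete at its declared crux; it becomes complete once you either supply that extraction argument or cite \cite{Or1}.
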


Before presenting the proof, we recall some terminology from \cite{Or1}. Rescaling the translation vectors $w_{i}$ if necessary, we may assume that the similitudes $\psi_i$, $i\in \{1,\ldots,q\}$, map the ball $B(0,\frac{1}{2})$ into itself. Then, we set $\mathcal{B}_0=\{B(0,\frac{1}{2})\}$ and refer to the recursively defined family
\begin{displaymath}
\mathcal{B}_n:= \{\psi_j(B):\; B\in\mathcal{B}_{n-1},\, 1\leq j\leq q\}
\end{displaymath}
as the collection of \emph{generation $n$ balls of $K$ associated with $\{\psi_1,\ldots,\psi_q\}$}.

The subset $K_{V,\varepsilon}$ to be constructed in the proof of Proposition \ref{p:3} will be the attractor of a family of similitudes of the form $\{\psi_B:\; B\in \mathcal{G}\}$, where $\mathcal{G}$ is a suitably chosen collection of balls in $\bigcup_{m\in\mathbb{N}}\mathcal{B}_m$. Here, $\psi_B$ stands for a similitude of the form $\psi_{B} = \psi_{i_1}\circ \cdots \circ \psi_{i_n}$, mapping $B(0,\frac{1}{2})$ to $B=\psi_{i_1}\circ \cdots \psi_{i_n}(B(0,\frac{1}{2}))$. For a given $B \in \mathcal{B}_n$, the selection of $\psi_{i_1},\ldots,\psi_{i_n}$ may not be unique, but then any choice is equally good for us. Observe that, for an arbitrary collection of balls $\mathcal{G}\subseteq \bigcup_{m\in\mathbb{N}}\mathcal{B}_m$, the associated attractor is a subset of $K$. Also, since $\mathcal{B}_0$ was defined to consist of a single ball of diameter one, $\psi_B$ has contraction ratio $\mathrm{diam}(B)$.

If $\{r_1,\ldots,r_q\}$ are the contraction ratios of an IFS $\{\psi_1,\ldots,\psi_q\}$, then the \emph{similarity dimension} of the associated attractor $K$ is defined as the unique number $s\geq 0$, which solves the equation
\begin{displaymath}
\sum_{j=1}^q r_j^s =1.
\end{displaymath}
It is well known, see \cite{Hu}, that $s=\Hd K$, provided that $K$ exhibits a sufficient degree of separation. One such condition is the \emph{very strong separation condition}, which, by definition, requires the generation $1$ balls of $K$ to be disjoint. It is a stronger requirement than the \emph{open set condition} commonly used in literature, but will be very convenient in the proof of Proposition \ref{p:3}.

\begin{proof}[Proof of Proposition \ref{p:3}] Let $V\in G(3,2)$ and $\varepsilon > 0$ be arbitrary. The assumption that the similitudes $\psi_1,\ldots,\psi_q$ generating the self-similar set $K$ contain no rotations ensures that the set $\pi_{V}(K)$ is again self-similar. It is a subset of $V$, or, under the customary identification, a subset of $\mathbb{R}^2$, given by the IFS
\begin{displaymath}
\{\psi_{1,V},\ldots,\psi_{q,V}\}\quad \text{with }\psi_{j,V}:\mathbb{R}^2 \to \mathbb{R}^2,\;\psi_{j,V}(x)=r_j x + \pi_V(w_j).
\end{displaymath}
The corresponding collection of generation $n$ balls will be denoted by $\mathcal{B}_{n,V}$. Observe that the ball $B(0,\frac{1}{2})$ in $\mathbb{R}^3$ is projected to the ball $B(0,\tfrac{1}{2})$ in $\mathbb{R}^2$, and hence $\mathcal{B}_{n,V}$ comprises precisely the projections of the balls in $\mathcal{B}_n$.

 According to Lemma 3.4 in \cite{Or1}, we can  for every $\varepsilon>0$ choose a self-similar set $K^V\subset \pi_V(K)$ (depending on $\varepsilon$) with $\Hd K^V\geq \Hd \pi_V(K)-\varepsilon$ satisfying the very strong separation condition. In fact, the proof in \cite{Or1} provides an IFS, which generates the set $K^V$ and for which the generation $1$ balls are a subcollection $\mathcal{B}_1^V$ of disjoint balls in $\mathcal{B}_{n,V}$, for some large $n \in \N$. Moreover, we have
 \begin{equation}\label{form13} \sum_{B\in \mathcal{B}_1^V} \mathrm{diam}(B)^{s}=1 \end{equation}
 with $s=\Hd K^V$.
Each ball $B\in \mathcal{B}_1^V$ is the image of a ball in $\mathcal{B}_n$ under the projection $\pi_V$. There might be several such balls in $\mathcal{B}_{n}$, but we just pick one of them. We denote by $\mathcal{G}_{1}$ the collection of balls in $\mathbb{R}^3$ obtained in this way. Since the balls in $\mathcal{B}_1^V$ are disjoint, the balls in $\mathcal{G}_1$ are contained in disjoint well-separated tubes perpendicular to $V$ (and thus parallel to $V^{\bot}$). Also, \eqref{form13} implies that
\begin{equation}\label{eq:eq_dims}
 \sum_{B\in \mathcal{G}_1} \mathrm{diam}(B)^{s}=1
 \end{equation}
with $s=\Hd K^V$.
The set $K_{V,\varepsilon} \subset K$, whose existence is claimed in the statement of the proposition, is obtained as the attractor of the IFS $\{\psi_B:\; B\in\mathcal{G}_1\}$. In other words, the balls in $\mathcal{G}_1$ form the generation $1$ balls of $K_{V,\varepsilon}$. By \eqref{eq:eq_dims} and the strong separation condition, we have
\begin{displaymath}
\Hd K_{V,\varepsilon} = \Hd K^V \geq \Hd \pi_V(K)-\varepsilon.
\end{displaymath}

It remains to be established that the restriction of $\pi_V$ to $K_{V,\varepsilon}$ is bi-Lipschitz. To this end, we use the equivalent characterisation of this property in terms of cones as stated in Lemma \ref{l:equiv_snto}. So far, we know that distinct balls in $\mathcal{G}_1$ are contained in disjoint closed tubes in direction $V^{\bot}$. This allows us to find $\alpha>0$ so that
\begin{equation}\label{eq:cone_sep_1}
B \cap X(y,V^{\bot},\alpha) =\emptyset \quad \text{for all }y\in B',
\end{equation}
whenever $B$ and $B'$ are distinct balls in $\mathcal{G}_1$. Then, it is a consequence of self-similarity that \eqref{eq:cone_sep_1} holds \textbf{with the same constant $\alpha$} for distinct generation $n$ balls of $K_{V,\varepsilon}$, for any $n \in \N$. This is the content of the following lemma, a counterpart of which for sets in the plane is \cite[Proposition 4.14]{Or1}. Since the proof in higher dimensions is completely analogous, we omit it here.

\begin{lemma}\label{l:cone_sep}
Let $n\in\mathbb{N}$ be arbitrary and denote by $\mathcal{G}_n$  the generation $n$ balls of $K_{V,\varepsilon}$. Then, whenever $B$ and $B'$ are distinct balls in $\mathcal{G}_n$, we have
\begin{equation*}
B \cap X(y,V^{\bot},\alpha) =\emptyset \quad \text{for all }y\in B'.
\end{equation*}
\end{lemma}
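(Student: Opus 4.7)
The plan is to argue by induction on $n$, with the base case $n=1$ being exactly the hypothesis \eqref{eq:cone_sep_1}. For the inductive step, suppose the claim holds with the given constant $\alpha$ for every pair of distinct balls in $\mathcal{G}_{n-1}$, and let $B, B'$ be distinct elements of $\mathcal{G}_n$. Every generation-$n$ ball of $K_{V,\varepsilon}$ has the form $\psi_{B_{1}}(\tilde{B})$ with $B_{1} \in \mathcal{G}_1$ and $\tilde{B} \in \mathcal{G}_{n-1}$, so write $B = \psi_{B_{1}}(\tilde{B})$ and $B' = \psi_{B_{1}'}(\tilde{B}')$, and split according to whether the generation-$1$ ancestors agree.

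If $B_{1} \neq B_{1}'$, then $B \subset B_{1}$ and $B' \subset B_{1}'$, and the base case \eqref{eq:cone_sep_1} applied to the pair $B_{1}, B_{1}'$ immediately yields $B \cap X(y,V^{\perp},\alpha) \subset B_{1} \cap X(y,V^{\perp},\alpha) = \emptyset$ for every $y \in B' \subset B_{1}'$. The substantive case is $B_{1} = B_{1}'$, in which $\tilde{B}, \tilde{B}' \in \mathcal{G}_{n-1}$ must be distinct. Since $K$ is a self-similar set without rotations, $\psi_{B_{1}}(z) = rz + w$ for some $r \in (0,1)$ and $w \in \R^{3}$. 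Hence $\psi_{B_{1}}(\tilde{x}) - \psi_{B_{1}}(\tilde{y}) = r(\tilde{x} - \tilde{y})$, and both $|\cdot|$ and the distance $d(\cdot,V^{\perp})$ to the linear subspace $V^{\perp}$ rescale by the common factor $r$. Consequently, the cones about $V^{\perp}$ are invariant under $\psi_{B_{1}}$ in the sense that
\begin{displaymath} \psi_{B_{1}}(\tilde{x}) \in X(\psi_{B_{1}}(\tilde{y}),V^{\perp},\alpha) \quad \Longleftrightarrow \quad \tilde{x} \in X(\tilde{y},V^{\perp},\alpha). \end{displaymath}
If some $y \in B'$ admitted an $x \in B \cap X(y,V^{\perp},\alpha)$, writing $x = \psi_{B_{1}}(\tilde{x})$ and $y = \psi_{B_{1}}(\tilde{y})$ with $\tilde{x} \in \tilde{B}$ and $\tilde{y} \in \tilde{B}'$ would yield $\tilde{x} \in \tilde{B} \cap X(\tilde{y},V^{\perp},\alpha)$, contradicting the inductive hypothesis applied to $\tilde{B}, \tilde{B}' \in \mathcal{G}_{n-1}$.

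The only delicate point --- and the reason the argument would fail for self-similar sets \emph{with} rotations --- is the invariance of the $V^{\perp}$-directed cones under the similitudes $\psi_{B_{1}}$. A rotational component in $\psi_{B_{1}}$ could map the axis $V^{\perp}$ to a different line, destroying the scale-invariant comparison between cones at different generations and forcing one to let the opening angle $\alpha$ deteriorate at each step. Here the no-rotations hypothesis is doing precisely the work that keeps the single constant $\alpha$ alive through all generations.
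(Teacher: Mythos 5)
Your proof is correct. The paper in fact omits the proof of this lemma entirely, citing only the planar analogue \cite[Proposition 4.14]{Or1} and calling the higher-dimensional version ``completely analogous''; your induction on the generation --- handling distinct generation-$1$ ancestors via \eqref{eq:cone_sep_1} and identical ancestors via the fact that a rotation-free similitude $z \mapsto rz + w$ preserves the cones $X(\cdot, V^{\perp},\alpha)$ with the same opening angle --- is precisely the intended argument, and your closing remark correctly identifies where the no-rotations hypothesis is used.
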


Consequently, the restriction of $\pi_{V}$ to $K_{V,\varepsilon}$ is bi-Lipschitz with constant $\alpha$, and the proof of the proposition is complete. \end{proof}

Finally, Theorem \ref{main2} follows by combining Theorem \ref{main2_BLP} with the BLP property of self-similar sets established in Proposition \ref{p:3}.

\section{Further results}\label{furtherResults}

This section contains further results concerning the projections onto a non-degenerate family of lines. It consists of two parts that, in specific situations, provide additional information to the dimension bounds obtained in Theorem \ref{main1}(a).
In the first part, we consider a special non-degenerate family of lines, namely those foliating the surface of a cone, and the result obtained only applies to sets of a certain product form. In the second part, we again return to arbitrary non-degenerate families of lines and prove an explicit dimension bound for the associated projections as stated in Remark \ref{bestBounds}.

\subsection{Product sets and projections onto lines on a cone}

Let $K = K_1 \times K_2$ be a product set in $\mathbb{R}^3$ with $K_1 \subset \mathbb{R}^2$ and $K_2 \subset \mathbb{R}$, and consider the curve $\gamma \colon (0,2\pi) \to S(0,\sqrt{2})$ given by
\begin{displaymath}
\gamma(\theta)= (\cos(\theta), \sin(\theta),1).
\end{displaymath}
Then the lines $\ell_{\theta} := \spa(\gamma(\theta))$, $\theta \in (0,2\pi)$, foliate the surface of a vertical cone in $\R^{3}$, and the projections of $K$ under $\rho_{\theta}(x) := \gamma(\theta) \cdot x$ have a particularly simple form:
\begin{equation}\label{productProj} \rho_{\theta}(K) = \rho_{\theta}(K_{1} \times K_{2}) = \wp_{\theta}(K_{1}) + K_{2}, \end{equation}
where $\wp_{\theta} \colon \R^{2} \to \R$ is the planar projection $\wp_{\theta}(x,y) = x \cos \theta + y \sin \theta$. It is easy to verify that the curve $\gamma$ (normalised by a constant) satisfies the non-degeneracy condition \eqref{eq:curve_cond}, so Theorem \ref{main2} holds for the projections $\rho_{\theta}$. Applying part (b) to the $3$-fold product of an equicontractive self-similar set in $\R$ (which is a self-similar set in $\R^{3}$) and recalling \eqref{productProj} yields Corollary \ref{cor1}.

As the first `further result', we prove a variant of Theorem \ref{main1}(a) for product sets $K = K_{1} \times K_{2}$ and the special family of projections $\rho_{\theta}$ defined above.

\begin{proposition}
Let $K=K_1\times K_2 \subset \R^{3}$, where $K_1\subset \mathbb{R}^2$, $K_2 \subset \mathbb{R}$ are analytic sets. Then $\Hd \rho_{\theta}(K)\geq \mathrm{min}\{\tfrac{1}{2},\Hd K_1\} + \Hd K_2$ for almost every $\theta\in (0,2\pi)$.
\end{proposition}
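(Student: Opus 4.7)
The plan is a Fourier-analytic variant of the potential argument used in Proposition~\ref{prop1}(a), exploiting the product structure $K = K_1 \times K_2$. Fix $s < \min\{1/2, \Hd K_1\}$ and $t < \Hd K_2$ with $s + t < 1$. By Frostman's lemma, choose probability measures $\mu_1$ on $K_1$ and $\mu_2$ on $K_2$ with $I_s(\mu_1) < \infty$ and $I_t(\mu_2) < \infty$, and let $\mu := \mu_1 \times \mu_2$, supported on $K$. The goal is the energy bound
\[
\int_0^{2\pi} I_{s+t}(\rho_{\theta\sharp}\mu)\, d\theta < \infty,
\]
from which $I_{s+t}(\rho_{\theta\sharp}\mu) < \infty$ and hence $\Hd \rho_\theta(K) \geq s+t$ for almost every $\theta$; letting $s$ and $t$ tend to their suprema then yields the proposition (the statement is trivially capped at $1$, the maximal dimension of a subset of $\R$; in the range where $\min\{1/2,\Hd K_1\} + \Hd K_2 > 1$ the same computation applied with $s + t \nearrow 1$ in fact produces $\widehat{\rho_{\theta\sharp}\mu} \in L^2(\R)$ for a.e.\ $\theta$, so that $\rho_\theta(K)$ has positive length).

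The key identity is the factoring of the Fourier transform of the push-forward. Since $\rho_\theta(x_1,x_2,x_3) = \omega(\theta)\cdot(x_1,x_2) + x_3$ with $\omega(\theta) := (\cos\theta,\sin\theta)$,
\[
\widehat{\rho_{\theta\sharp}\mu}(\xi) = \hat{\mu}_1(\xi\,\omega(\theta))\,\hat{\mu}_2(\xi), \qquad \xi \in \R.
\]
Combined with the Fourier representation of $1$-dimensional Riesz energies and Fubini's theorem, this yields
\[
\int_0^{2\pi} I_{s+t}(\rho_{\theta\sharp}\mu)\, d\theta \sim \int_\R |\hat{\mu}_2(\xi)|^2\,|\xi|^{s+t-1}\, A(|\xi|)\, d\xi,
\]
where $A(\rho) := \int_{S^1} |\hat{\mu}_1(\rho\omega)|^2\, d\omega$ is the spherical average of $|\hat{\mu}_1|^2$ on the circle of radius $\rho$.

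The crucial ingredient is the classical Mattila--Kaufman spherical-average bound: for an $s$-Frostman measure $\mu_1$ on $\R^2$ with $s \leq 1/2$, one has $A(\rho) \lesssim_{s} \rho^{-s}$ uniformly in $\rho \geq 1$, while $A$ is bounded near $\rho = 0$ by $\|\mu_1\|^2 < \infty$. Substituting this estimate reduces the required inequality to
\[
\int_\R |\hat{\mu}_2(\xi)|^2\, |\xi|^{t-1}\, d\xi \sim I_t(\mu_2) < \infty,
\]
which holds by the choice of $\mu_2$. The main obstacle is exactly the restriction $s \leq 1/2$ inherent in the planar spherical-average bound; this is precisely what forces the threshold $\min\{1/2, \Hd K_1\}$ in the conclusion, rather than the $\min\{1, \Hd K_1\}$ that a combination of Marstrand's planar projection theorem for the maps $\wp_\theta$ with a naive sum-set dimension heuristic would otherwise suggest.
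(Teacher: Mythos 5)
Your proposal is correct and follows essentially the same route as the paper: a product Frostman measure $\mu_1\times\mu_2$, the Fourier representation of the energy together with the factorisation $\hat{\mu}(r\gamma(\theta))=\hat{\mu}_1(r\cos\theta,r\sin\theta)\hat{\mu}_2(r)$, and Mattila's spherical-average bound $\sigma(\mu_1)(|r|)\lesssim |r|^{-t_1}I_{t_1}(\mu_1)$ for $t_1<1/2$, reducing everything to $I_{t_2}(\mu_2)<\infty$. Your added remarks on capping the exponent sum at $1$ and on the $L^2$ conclusion when the sum exceeds $1$ go slightly beyond what the paper records but are consistent with it.
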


\begin{proof} Let $\mu_{1}$ and $\mu_{2}$ be positive Borel measures supported on $K_1$ and $K_2$, respectively, such that $I_{t_{1}}(\mu_{1}) < \infty$ for some $0 < t_{1} < \min\{\Hd K_{1}, 1/2\}$ and $I_{t_{2}}(\mu_{2}) < \infty$ for some $0 < t_{2} < \Hd K_{2}$. Then, with $\mu=\mu_1\times \mu_2$, we have
\begin{align*}
\int_{0}^{2\pi} I_{t_1+t_2}(\rho_{\theta\sharp}\mu) \, d\theta&= \int_{0}^{2\pi}\left(\int |\widehat{\rho_{\theta\sharp}\mu}(r)|^2|r|^{t_1+t_2-1}dr\right)d\theta\\
&\sim \int_{0}^{2\pi}\left(\int |\hat \mu(r\gamma(\theta))|^2 |r|^{t_1+t_2-1}dr\right)d\theta\\
&= \int |\hat \mu_2(r)|^2 \left(\int_{0}^{2\pi}|\hat \mu_1(r \cos \theta,r \sin \theta)|^{2} \, d\theta\right)|r|^{t_1+t_2-1}dr.
\end{align*}
The inner integral is, by definition, the \emph{spherical average} $\sigma(\mu_{1})(|r|)$ of $\mu_1$ and an estimate of P.\ Mattila, see \cite[Theorem 3.8]{Mat2}, yields
\begin{displaymath}
\sigma(\mu_{1})\left(|r|\right) \lesssim |r|^{-t_1}I_{t_1}(\mu_1).
\end{displaymath}
Here we needed the assumption $t_1<1/2$, which guarantees that $t_1$ is within the range where the results from \cite{Mat2} apply. We may now conclude that

\begin{displaymath}
\int_{0}^{2\pi} I_{t_1+t_2}(\rho_{\theta\sharp}\mu) \, d\theta \lesssim I_{t_1}(\mu_1)\int |\hat \mu_2(r)|^2 |r|^{t_2-1} \, dr \sim I_{t_1}(\mu_1)I_{t_2}(\mu_2)<\infty,
\end{displaymath}
and thus $I_{t_1+t_2}(\rho_{\theta\sharp}\mu)<\infty$ for almost every $\theta$. This implies that
\begin{displaymath} \Hd \rho_{\theta}(K_1 \times K_2)\geq t_1+t_2 \end{displaymath}
for almost every $\theta \in (0,2\pi)$, and the proposition follows.
\end{proof}

Before moving on to other topics, we remark that, in light of \eqref{productProj}, the following conjecture is a weaker variant of Conjecture \ref{dimensionConservation}:
\begin{conjecture} Let $K_{1} \subset \R^{2}$ and $K_{2} \subset \R$ be analytic sets satisfying $\Hd K_{1} + \Hd K_{2} \leq 1$. Then
\begin{displaymath}
\Hd (\wp_{\theta}(K_1)+K_2) \geq \Hd K_{1} + \Hd K_{2} \end{displaymath}
for almost every $\theta\in (0,2\pi)$.
\end{conjecture}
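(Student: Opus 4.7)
My plan would be to extend the Fourier-analytic argument from the proof of the preceding proposition, which becomes sharp at the threshold $t_{1} = 1/2$. Pick, by Frostman's lemma, positive Borel measures $\mu_{1}$ on $K_{1}$ and $\mu_{2}$ on $K_{2}$ such that $I_{t_{i}}(\mu_{i}) < \infty$ for any $t_{i} < \Hd K_{i}$, set $\mu := \mu_{1} \times \mu_{2}$, and repeat the same Plancherel manipulation to obtain
\begin{displaymath}
\int_{0}^{2\pi} I_{t_{1}+t_{2}}(\rho_{\theta\sharp}\mu)\, d\theta \sim \int |\hat{\mu}_{2}(r)|^{2}\, \sigma(\mu_{1})(|r|)\, |r|^{t_{1}+t_{2}-1}\, dr,
\end{displaymath}
where $\sigma(\mu_{1})(R) := \int_{0}^{2\pi} |\hat{\mu}_{1}(R(\cos\theta, \sin\theta))|^{2}\, d\theta$ is the usual spherical average. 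To close the argument for arbitrary $t_{1} < \Hd K_{1}$ with $t_{1} + t_{2} \leq 1$, one would like to bound the right-hand side by a constant multiple of $I_{t_{2}}(\mu_{2})$, which amounts to the uniform decay $\sigma(\mu_{1})(R) \lesssim R^{-t_{1}}$.

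The main obstacle is that this is precisely Mattila's spherical-average conjecture in $\R^{2}$: the bound is known for $t_{1} \leq 1/2$ (the estimate used in the preceding proposition) but remains open for $t_{1} \in (1/2, 1)$, where the best available results of Wolff, Erdo\u{g}an, and subsequent decoupling-based work give only $\sigma(\mu_{1})(R) \lesssim R^{-\beta(t_{1})}$ with $\beta(t_{1}) < t_{1}$. Substituting such a weaker bound yields merely $\Hd(\wp_{\theta}(K_{1}) + K_{2}) \geq \beta(\Hd K_{1}) + \Hd K_{2}$ almost surely, which falls short of the desired target.

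A second avenue I would pursue in parallel is to exploit the product structure directly. Marstrand's slicing theorem implies that for almost every $\theta$ the fibres of $\wp_{\theta}|_{K_{1}}$ are zero-dimensional, so $\Hd \wp_{\theta}(K_{1}) = \Hd K_{1}$, and the conjecture reduces to a generic sumset identity $\Hd(A_{\theta} + K_{2}) = \Hd A_{\theta} + \Hd K_{2}$ with $A_{\theta} = \wp_{\theta}(K_{1})$, in the range $\Hd A_{\theta} + \Hd K_{2} \leq 1$. Such identities are known in the self-similar setting via work of Peres--Shmerkin and Hochman, but extending them to arbitrary analytic sets appears to require genuinely new ideas. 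The most promising mixed strategy I see is to retain the sub-optimal spherical-average bound in the displayed integral and recover the missing exponent from Fourier cancellation between $\hat{\mu}_{2}(r)$ and $\sigma(\mu_{1})(r)$; the hard part is that the required cancellation fails pointwise and can only hold in an averaged sense tied to the specific geometry of the product measure $\mu_{1} \times \mu_{2}$, and this is precisely where the conjecture resists proof by the techniques available here.
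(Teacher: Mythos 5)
You were asked to prove a statement that the paper itself does not prove: it appears there explicitly as a \emph{conjecture}, introduced only as a weaker variant (via the identity $\rho_{\theta}(K_{1} \times K_{2}) = \wp_{\theta}(K_{1}) + K_{2}$) of the paper's main dimension-conservation conjecture for projections onto non-degenerate families of lines. So there is no proof in the paper to compare against, and your proposal --- which you candidly frame as an obstruction analysis rather than a proof --- does not close the gap either. Your diagnosis of the first route is accurate and matches the paper's own accounting: the proposition immediately preceding the conjecture establishes exactly $\Hd \rho_{\theta}(K_{1} \times K_{2}) \geq \min\{\tfrac{1}{2}, \Hd K_{1}\} + \Hd K_{2}$ by the same Plancherel computation you reproduce, and the authors note that the restriction $t_{1} < 1/2$ is forced by the range of validity of Mattila's spherical-average estimate $\sigma(\mu_{1})(R) \lesssim R^{-t_{1}} I_{t_{1}}(\mu_{1})$. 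Pushing $t_{1}$ past $1/2$ with the known partial bounds of Wolff--Erdo\u{g}an type loses an exponent, exactly as you say.

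Your second avenue deserves one concrete warning beyond what you wrote. Even granting $\Hd \wp_{\theta}(K_{1}) = \Hd K_{1}$ for almost every $\theta$ (which is just Marstrand's projection theorem, since $\Hd K_{1} \leq 1$ here), the sumset inequality $\Hd(A + B) \geq \Hd A + \Hd B$ is \emph{false} for general analytic sets $A, B \subset \R$ with $\Hd A + \Hd B \leq 1$: one can build $A$ and $B$ concentrated at the same sequence of dyadic scales so that $\Hd(A+B) = \max\{\Hd A, \Hd B\}$. So the reduction to a ``generic sumset identity'' cannot be carried out for each fixed $\theta$ separately; it would have to extract genericity from the $\theta$-dependence of $A_{\theta} = \wp_{\theta}(K_{1})$, and neither you nor the paper offers a mechanism for that outside the self-similar setting (where the paper's Theorem on BLP sets and Hochman's work do apply). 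In short: the statement is open, the paper proves only the $\min\{\tfrac{1}{2},\Hd K_{1}\} + \Hd K_{2}$ bound, and your proposal correctly identifies, but does not overcome, the two genuine obstructions.
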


\subsection{Another lower bound for general sets}

In this section, we consider the general one-dimensional family of projections $(\rho_{\theta})_{\theta \in U}$.
\begin{proposition}\label{s/2} If $K \subset \R^{3}$ is an analytic set with $0 \leq \Hd K \leq 2$, then $\Pd \rho_{\theta}(K) \geq \Hd K/2$ for almost every $\theta \in U$.
\end{proposition}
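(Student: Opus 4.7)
The plan is to extend Kaufman's potential-theoretic method from the proof of Proposition~\ref{prop1}(a), but targeting the exponent $s/2$ in place of $t<1/2$. Write $s=\Hd K$. First I would use Frostman's lemma for analytic sets to produce a probability measure $\mu$ supported on $K$ with $\mu(B(x,r))\lesssim r^{s'}$ for some $s'\in(s/2,s)$; this ensures $I_{s/2}(\mu)<\infty$. Since $\Pd\geq\Hd$, it would suffice to show $\Hd\rho_{\theta}(K)\geq s/2$ for almost every $\theta$, which in turn would follow from
\[
\int_{0}^{1} I_{s/2}(\mu_{\theta})\,d\theta \;\lesssim\; I_{s/2}(\mu).
\]

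After Fubini, the key ingredient would be the pointwise bound $\int_0^1|\rho_{\theta}(x)|^{-s/2}\,d\theta \lesssim |x|^{-s/2}$. By scaling this reduces to the case $x\in S^{2}$, where the layer cake formula rewrites the left side as $\int_{0}^{\infty}\mathcal{L}^{1}\{\theta:|\rho_{\theta}(x)|\leq u\}\,u^{-s/2-1}\,du$. Inserting the sub-level estimate \eqref{eq:sublevel2}, $\mathcal{L}^{1}\{\theta:|\rho_{\theta}(x)|\leq u\}\lesssim u^{1/2}$ for $u\leq 1$, the integrand becomes $u^{-s/2-1/2}$ on $(0,1]$, which is integrable precisely when $s<1$. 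This Kaufman-style computation yields $\Hd\rho_{\theta}(K)\geq s/2$ almost surely and hence the proposition in the subrange $0\leq s<1$ (where $s/2\leq 1/2$).

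The main obstacle is the regime $s\in[1,2]$, where $s/2\geq 1/2$ and the pointwise integral above diverges. Geometrically, the divergence is caused by second-order zeros of $\theta\mapsto\rho_{\theta}(x)$ when $x$ lies on the bad cone $C_{[0,1]}^{\rho}$, the same obstruction that prevents Proposition~\ref{prop1}(a) from surpassing $\Hd B\leq 1/2$. Since a direct potential-theoretic argument from the sub-level estimate alone cannot pass the $1/2$ ceiling for $\Hd$, the plan for $s\geq 1$ would be to replace the Hausdorff-dimension step by a packing-dimension step: use Lemma~\ref{packingToBox} to reduce to proving $\Bd\rho_{\theta}(K)\geq s/2$ almost surely, and then exploit the full non-degeneracy condition~\eqref{eq:curve_cond} -- in particular the curvature of the cone $\bigcup_{\theta}\ell_{\theta}$ -- via a Fourier-averaged decay bound of the shape $\int_{0}^{1}|\widehat{\mu}(r\gamma(\theta))|^{2}\,d\theta\lesssim |r|^{-s/2}$ for large $|r|$. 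Plancherel applied to $\mu_{\theta}\ast\phi_{\delta}$ would then give $\int_{0}^{1}\|\mu_{\theta}\ast\phi_{\delta}\|_{2}^{2}\,d\theta\lesssim\delta^{s/2-1}$, and a Cauchy--Schwarz bound on the support length of $\mu_{\theta}\ast\phi_{\delta}$ would yield $\Bd\rho_{\theta}(K)\geq s/2$ almost surely. Establishing the curve Fourier decay bound is the principal technical hurdle in this second part of the plan.
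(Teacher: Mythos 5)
Your argument splits into two regimes, and only the first is complete. For $s = \Hd K < 1$ the Kaufman computation with exponent $t = s/2 < 1/2$ does go through (it is exactly Lemma \ref{l:int_univ_bd} applied with $t = s/2$, together with a Frostman measure of exponent $s' \in (s/2,s)$ to guarantee $I_{s/2}(\mu)<\infty$), and it even yields the stronger conclusion $\Hd\rho_{\theta}(K)\ge s/2$ there. But for $s\in[1,2]$ your proof reduces the proposition to the unproven curve-average decay estimate $\int_0^1|\hat{\mu}(r\gamma(\theta))|^2\,d\theta\lesssim |r|^{-s/2}$, and this is a genuine gap rather than a deferred technicality: no argument is offered, and the estimate is in fact highly problematic. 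The phase $\theta\mapsto r\,\gamma(\theta)\cdot z$ can have a critical point at which the second derivative also vanishes, namely when $z$ is parallel to $\dot{\gamma}(\theta_0)\times\ddot{\gamma}(\theta_0)$; the non-degeneracy condition \eqref{eq:curve_cond} does not exclude this. For measures with substantial mass concentrated along such directions, stationary phase gives only decay of order $|r|^{-1/3}$ for the relevant oscillatory integral, so a pointwise-in-$r$ bound of order $|r|^{-s/2}$ cannot be expected once $s/2>1/3$. An estimate of this general type, if available in a suitable form, would essentially resolve Conjecture \ref{dimensionConservation} for the projections $\rho_{\theta}$, so it cannot be assumed as a lemma here.

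The paper proves the proposition by an entirely different and much softer device: after locating Lebesgue points $\theta_1,\theta_2$ of the exceptional set with $(\gamma(\theta_1)\times\dot{\gamma}(\theta_1))\cdot\dot{\gamma}(\theta_2)\ne0$ (which is where \eqref{eq:curve_cond} enters), it forms the two-parameter family $\Pi_{(\theta_I,\theta_J)}=(\rho_{\theta_I},\rho_{\theta_J})\colon\R^3\to\R^2$, checks that this is a transversal family of generalised projections in the sense of Peres and Schlag, and invokes their Theorem 7.3 to conclude $\Hd\Pi_{(\theta_I,\theta_J)}(K)=s$ for almost every pair. Since $\Pi_{(\theta_I,\theta_J)}(K)\subset\rho_{\theta_I}(K)\times\rho_{\theta_J}(K)$ and the Hausdorff dimension of a product is at most the sum of the packing dimensions of the factors, one of the two projections must have packing dimension at least $s/2$, and a density-point pigeonhole yields the contradiction. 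Note that packing dimension enters precisely through this product inequality; your route, if it could be completed, would prove the stronger Hausdorff-dimension statement, which is a further sign that it aims at something substantially harder than the proposition requires.
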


The proposition starts improving on the lower bound for $\sigma_{1}(s) > 1/2$ from Remark \ref{bestBounds} when $\Hd K = s \approx 1.077$.

\begin{proof}[Proof of Proposition \ref{s/2}] Write $\Hd K =: s$ and assume $0 < s \leq 2$. To reach a contradiction, suppose that there is a set $E \subset U$ with positive length such that $\Pd \rho_{\theta}(K) < s/2$ for every $\theta \in E$. Find two distinct Lebesgue points $\theta_{1},\theta_{2} \in E$ such that
\begin{displaymath} (\gamma(\theta_{1}) \times \dot{\gamma}(\theta_{1})) \cdot \dot{\gamma}(\theta_{2}) \neq 0. \end{displaymath}
Such points are given by the same argument as we used to obtain \eqref{form1}. Next, use continuity to find short open neighbourhoods $I,J \subset U$ of $\theta_{1}$ and $\theta_{2}$ such that
\begin{equation}\label{form11} |(\gamma(\theta_{I}) \times \dot{\gamma}(\theta_{I})) \cdot \dot{\gamma}(\theta_{J})| \geq c > 0 \end{equation}
for all $(\theta_{I},\theta_{J}) \in I \times J \subset \R^{2}$. Then, consider the two-parameter family of projections $\Pi_{(\theta_{I},\theta_{J})} \colon \R^{3} \to \R^{2}$, $(\theta_{I},\theta_{J}) \in I \times J$, given by
\begin{displaymath} \Pi_{(\theta_{I},\theta_{J})}(x) := (\rho_{\theta_{I}}(x),\rho_{\theta_{J}}(x)) = (\gamma(\theta_{I}) \cdot x, \gamma(\theta_{J}) \cdot x). \end{displaymath}
Using \eqref{form11}, one may check that this is a family of generalised projections satisfying the framework of Peres and Schlag, see \cite[Definitions 7.1 and 7.2]{PSc}. Indeed, if $x \in \R^{3}$ is a unit vector such that, simultaneously, $\Pi_{(\theta_{I},\theta_{J})}(x) = 0$ and
\begin{equation}\label{transversality2} 0 = \det[D\Pi_{(\theta_{I},\theta_{J})}(x)(D\Pi_{(\theta_{I},\theta_{J})}(x))^{T}] = (\dot{\gamma}(\theta_{I}) \cdot x)^{2} + (\dot{\gamma}(\theta_{J}) \cdot x)^{2}, \end{equation}
then $x$ is perpendicular to both planes $\spa(\{\gamma(\theta_{I}),\gamma(\theta_{J})\})$ and $\spa(\{\dot{\gamma}(\theta_{I}),\dot{\gamma}(\theta_{J})\})$, which implies that
\begin{displaymath} 0 = \gamma(\theta_{I}) \cdot (\dot{\gamma}(\theta_{I}) \times \dot{\gamma}(\theta_{J})) = (\gamma(\theta_{I}) \times \dot{\gamma}(\theta_{I})) \cdot \dot{\gamma}(\theta_{J}), \end{displaymath}
violating \eqref{form11}. Since we have not properly introduced the "generalised projections" framework of Peres and Schlag (see Remark \ref{PeresAndSchlag}), we can only state that checking the requirements in \cite[Definitions 7.1 and 7.2]{PSc} amounts precisely to verifying that $\Pi_{(\theta_{I},\theta_{J})}(x) = 0$ and the equation \eqref{transversality2} cannot hold simultaneously -- and this we have just done.

Now \cite[Theorem 7.3]{PSc} implies that $\Hd \Pi_{(\theta_{I},\theta_{J})}(K) = s$ for almost every pair $(\theta_{I},\theta_{J}) \in I \times J$. On the other hand,
\begin{displaymath} \Pi_{(\theta_{I},\theta_{J})}(K) \subset \rho_{\theta_{I}}(K) \times \rho_{\theta_{J}}(K), \end{displaymath}
so we obtain the estimate
\begin{displaymath} \Pd \rho_{\theta_{I}}(K) + \Pd \rho_{\theta_{J}}(K) \geq \Hd \Pi_{(\theta_{I},\theta_{J})}(K) = s \end{displaymath}
for almost every pair $(\theta_{I},\theta_{J}) \in I \times J$. For such pairs $(\theta_{I},\theta_{J})$, we have either $\Pd \rho_{\theta_{I}}(K) \geq s/2$ or $\Pd \rho_{\theta_{J}}(K) \geq s/2$, which means that
\begin{align*} |I||J| & = |I \times J| = |\{(\theta_{I},\theta_{J}) : \Pd \rho_{\theta_{I}}(K) \geq \tfrac{s}{2} \text{ or } \Pd \rho_{\theta_{J}}(K) \geq \tfrac{s}{2}\}|\\
& \leq \left|\left\{\theta_{I} \in I : \Pd \rho_{\theta_{I}}(K) \geq \tfrac{s}{2}\right\}\right||J| + |I|\left|\left\{\theta_{J} \in J : \Pd \rho_{\theta_{J}}(K) \geq \tfrac{s}{2}\right\}\right|. \end{align*}
We may conclude that either
\begin{displaymath} \left|\left\{\theta_{I} \in I : \Pd \rho_{\theta_{I}}(K) \geq \tfrac{s}{2}\right\}\right| \geq \frac{|I|}{2} \quad \text{ or } \quad \left|\left\{\theta_{J} \in J : \Pd \rho_{\theta_{J}}(K) \geq \tfrac{s}{2}\right\}\right| \geq \frac{|J|}{2}. \end{displaymath}
However, since $\theta_{1}$ and $\theta_{2}$ were Lebesgue points of $E$, neither option is possible if $I$ and $J$ were chosen short enough to begin with. This contradiction completes the proof.
\end{proof}

\appendix

\section{A discrete version of Frostman's lemma}\label{appFrostman}

In this section, we prove Lemma \ref{frostman}. Let us recall the statement:

\begin{proposition} Let $\delta > 0$, and let $B \subset \R^{3}$ be a set with $\cH^{s}_{\infty}(B) =: \kappa > 0$. Then, there exists a $(\delta,s)$-set $P \subset B$ with cardinality $|P| \gtrsim \kappa \cdot \delta^{-s}$.
\end{proposition}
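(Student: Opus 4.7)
The plan is a two-stage construction: first, extract a $\delta$-separated subset of $B$ of the required cardinality directly from the Hausdorff-content bound, and then prune it via a multi-scale dyadic procedure to enforce the uniform ball-counting condition defining a $(\delta,s)$-set. To begin, let $P_{0} \subset B$ be a maximal $\delta$-separated subset. By maximality, the collection of balls $\{B(x,\delta) : x \in P_{0}\}$ covers $B$ and hence forms an admissible cover in the definition of $\cH^{s}_{\infty}$. This yields
\begin{displaymath}
\kappa = \cH^{s}_{\infty}(B) \leq \sum_{x \in P_{0}} (2\delta)^{s} \lesssim |P_{0}| \cdot \delta^{s},
\end{displaymath}
so $|P_{0}| \gtrsim \kappa \delta^{-s}$. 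Of course $P_{0}$ could concentrate heavily in a small ball and so fail the $(\delta,s)$-set condition; the point of the remaining work is to prune $P_{0}$ into a subset $P$ enjoying the uniform Frostman bound.

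For the pruning, rescale so that $B \subset [0,1]^{3}$ and $\delta = 2^{-k_{0}}$, and let $\cD_{k}$ denote the dyadic cubes in $\R^{3}$ of side $2^{-k}$. Replace $P_{0}$ by the family $\calG_{0} \subset \cD_{k_{0}}$ of dyadic $\delta$-cubes meeting $P_{0}$, noting that $|\calG_{0}| \sim |P_{0}|$. Fix a large absolute constant $C$. Processing the dyadic scales from coarsest to finest, $k = 0, 1, \ldots, k_{0}$, I call a cube $Q \in \cD_{k}$ \emph{heavy} if it currently contains more than $C \cdot 2^{s(k_{0}-k)}$ surviving descendants; in each such $Q$, delete descendants until exactly $\lfloor C \cdot 2^{s(k_{0}-k)} \rfloor$ remain. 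Let $\calG$ be the surviving family, and let $P \subset B$ consist of one point of $B$ chosen from each cube of $\calG$. By construction, every $Q \in \cD_{k}$ contains at most $C \cdot 2^{s(k_{0}-k)}$ cubes of $\calG$; since any ball $B(x,r)$ with $r \geq \delta$ is contained in a bounded union of dyadic cubes of side $\sim r$, this immediately gives the desired bound $|P \cap B(x,r)| \lesssim (r/\delta)^{s}$.

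The main obstacle is to verify that the pruning leaves $|P| = |\calG| \gtrsim \kappa \delta^{-s}$. A convenient way is to replace the hard cardinality pruning by the corresponding mass-distribution (weighted) algorithm: assign initial weight $1$ to each $x \in P_{0}$ and, when the sum of weights in a heavy cube $Q \in \cD_{k}$ exceeds $C \cdot 2^{s(k_{0}-k)}$, rescale them by a common factor so that equality is restored. The resulting weights $w(x) \in [0,1]$ satisfy the Frostman-type bound $\sum_{x \in P_{0} \cap Q} w(x) \leq C \cdot 2^{s(k_{0}-k)}$ for every dyadic $Q$, and a standard identification of the total surviving weight with a dyadic Hausdorff $s$-content -- comparable to $\cH^{s}_{\infty}(B)$ up to dimensional constants -- yields $\sum_{x} w(x) \gtrsim \kappa \delta^{-s}$. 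A concluding random or deterministic selection (e.g.\ keeping each $x$ with probability $w(x)$, supplemented by a union bound over the polynomially many dyadic cubes in $[0,1]^3$ and a trivial $\delta$-separation bound at the finest scales) then extracts $P \subset P_{0}$ of the desired cardinality, while preserving the $(\delta,s)$-set condition. The delicate step is precisely this mass-preservation identity for the dyadic Frostman algorithm, which is where the lower bound $\cH^{s}_{\infty}(B) \geq \kappa$ enters decisively.
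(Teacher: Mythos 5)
Your overall strategy -- a dyadic pruning that enforces the Frostman counting condition, with the cardinality lower bound extracted by comparing surviving mass to the $s$-content of $B$ -- is the same as the paper's, and your opening step (maximal $\delta$-separated net, $|P_0|\gtrsim\kappa\delta^{-s}$ by the covering property) is correct. But the step you yourself flag as delicate, namely $|P|\gtrsim\kappa\delta^{-s}$ after pruning, is left as ``a standard identification,'' and your setup actually breaks the standard argument. The paper processes scales from \emph{finest to coarsest} and deletes only down to \emph{half} the cap; this is what guarantees that every $x\in P_0$ lies in a unique \emph{maximal} dyadic cube $Q_x$ (the coarsest one in which a deletion touching $x$'s cube occurred) whose final count is still at least $\tfrac12(d(Q_x)/\delta)^s$, because no deletion at any coarser scale ever happens above $Q_x$. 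These maximal cubes are disjoint and cover $B$, so $\sum d(Q_x)^s\geq\kappa$ and $|P|=\sum|P\cap Q_x|\geq\tfrac12\delta^{-s}\kappa$. With your coarsest-to-finest order this collapses: a cube pruned to exactly its cap at a coarse scale can subsequently lose almost all of its points when its descendants are pruned, so no disjoint family of ``cap-saturated'' cubes covering $B$ survives to the end. Worse, since you do not specify \emph{which} descendants to delete, the hard pruning can be catastrophic: for $B=[0,1]\times\{0\}^2$ and $s=1/2$, always retaining the leftmost surviving points re-concentrates the set at every scale and leaves only $O(1)$ points instead of $\sim\delta^{-1/2}$. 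The weighted variant with uniform rescaling avoids adversarial concentration, but the mass lower bound for the coarse-to-fine order is not the textbook Frostman argument and would need its own proof.

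The concluding randomized selection has a second, independent gap: keeping each point with probability $w(x)$ and union-bounding over the $\sim\delta^{-3}$ dyadic cubes controls $|P\cap Q|$ only up to an additive $\log(1/\delta)$, which destroys the $(\delta,s)$-condition (with a $\delta$-independent constant) at scales where the cap $(d(Q)/\delta)^s$ is below $\log(1/\delta)$; the trivial separation bound gives $(d(Q)/\delta)^3$ there, which is the wrong direction for $s<3$. Both problems disappear if you run the deterministic algorithm in the paper's order (fine to coarse, pruning to between half the cap and the cap) and then use the maximal-cube covering argument; no randomization is needed.
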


\begin{proof} Without loss of generality, assume that $\delta = 2^{-k}$ for some $k \in \N$ and $B \subset [0,1]^{3}$. Denote by $\cD_{k}$ the dyadic cubes in $\R^{3}$ of side-length $2^{-k}$. First, find all the dyadic cubes $Q^{k} \in \cD_{k}$ which intersect $B$, and choose a single point $x \in B \cap Q^{k}$ for each $Q^{k}$. The finite set so obtained is denoted by $P_{0}$. Next, modify $P_{0}$ as follows. Consider the cubes in $\cD_{k - 1}$. If one of these, say $Q^{k - 1}$, satisfies
\begin{displaymath} |P_{0} \cap Q^{k - 1}| > \left(\frac{d(Q^{k - 1})}{\delta}\right)^{s}, \end{displaymath}
remove points from $P_{0} \cap Q^{k - 1}$, until the reduced set $P_{0}'$ satisfies
\begin{displaymath} \frac{1}{2}\left(\frac{d(Q^{k - 1})}{\delta}\right)^{s} \leq |P_{0}' \cap Q^{k - 1}| \leq \left(\frac{d(Q^{k - 1})}{\delta}\right)^{s}. \end{displaymath}
Repeat this for all cubes $Q^{k - 1} \in \cD_{k - 1}$ to obtain $P_{1}$. Then, repeat the procedure at all dyadic scales up from $\delta$, one scale at a time: whenever $P_{j}$ has been defined, and there is a cube $Q^{k - j - 1} \in \cD_{k - j - 1}$ such that
\begin{displaymath} |P_{j} \cap Q^{k - j - 1}| > \left(\frac{d(Q^{k - j - 1})}{\delta}\right)^{s}, \end{displaymath}
remove points from $P_{j} \cap Q^{k - j - 1}$, until the reduced set $P_{j}'$ satisfies
\begin{equation}\label{appForm1} \frac{1}{2}\left(\frac{d(Q^{k - j - 1})}{\delta}\right)^{s} \leq |P_{j}' \cap Q^{k - j - 1}| \leq \left(\frac{d(Q^{k - j - 1})}{\delta}\right)^{s}. \end{equation}
Stop the process when the remaining set of points, denoted by $P$, is entirely contained in some dyadic cube $Q_{0} \subset [0,1]^{3}$. Now, we claim that for every point $x \in P_{0}$ there exists a unique maximal dyadic cube $Q_{x} \subset Q_{0}$ such that $\ell(Q_{x}) \geq \delta$ and
\begin{equation}\label{appForm2} |P \cap Q_{x}| \geq \frac{1}{2}\left(\frac{d(Q_{x})}{\delta}\right)^{s}. \end{equation}
We only need to show that there exists \textbf{at least one} cube $Q_{x} \ni x$ satisfying \eqref{appForm2}; the rest follows automatically from the dyadic structure. If $x \in P$, we have \eqref{appForm2} for the dyadic cube $Q_{x} \in \cD_{k}$ containing $x$. On the other hand, if $x \in P_{0} \setminus P$, the point $x$ was deleted from $P_{0}$ at some stage. Then, it makes sense to define $Q_{x}$ as the dyadic cube containing $x$, where the `last deletion of points' occurred. If this happened while defining $P_{j + 1}$, we have \eqref{appForm1} with $Q_{k - j - 1} = Q_{x}$. But since this was the last cube containing $x$, where \textbf{any} deletion of points occurred, we see that that $P_{j}' \cap Q_{x} = P \cap Q_{x}$. This gives \eqref{appForm2}.

Now, observe that the cubes $\{Q_{x} : x \in P_{0}\}$,
\begin{itemize}
\item cover $B$, because they cover every cube in $\cD_{k}$ containing a point in $P_{0}$, and these cubes cover $B$,
\item are disjoint, hence partition the set $P$.
\end{itemize}
These facts and \eqref{appForm2} yield the lower bound
\begin{displaymath} |P| = \sum |P \cap Q_{x}| \gtrsim \delta^{-s}\sum d(Q_{x})^{s} \geq \kappa \cdot \delta^{-s}. \end{displaymath}
It remains to prove that $P$ is a $(\delta,s)$-set. For dyadic cubes $Q \in \cD_{l}$ with $l \leq k$ it follows immediately from the construction of $P$, in particular the right hand side of \eqref{appForm1}, that
\begin{displaymath} |P \cap Q| \leq \left(\frac{d(Q)}{\delta}\right)^{s}. \end{displaymath}
The statement for balls $B \subset \R^{3}$ with $d(B) \geq \delta$ follows by observing that any such ball can be covered by $\sim 1$ dyadic cubes of diameter $\sim d(B)$.
\end{proof}

\section{Auxiliary results for curves}\label{appendix_b}

In this section, we prove Lemma \ref{l:no_zeros}, Lemma  \ref{CICJ} and Lemma \ref{etaNonDeg}.

\begin{proof}[Proof of Lemma \ref{l:no_zeros}]
Consider the function
\begin{displaymath}
\Pi:[0,1]\times S^2 \to \mathbb{R},\quad \Pi(\theta,x):=\rho_{\theta}(x)= \gamma(\theta)\cdot x,
\end{displaymath}
and let $\delta > 0$ be a constant such that
\begin{equation}\label{c}
\max\left\{|\Pi(\theta,x)|,
\left|{\partial_{\theta} \Pi}(\theta,x)\right|,
\left|{\partial^2_{\theta} \Pi}(\theta,x)\right|\right\}\geq \delta, \quad (\theta,x)\in [0,1]\times S^2.
\end{equation}
Then, find $\varepsilon >0$ so that for all $(\theta,x),(\theta',x)\in [0,1]\times S^2$ with $|\theta - \theta'|<\varepsilon$, we have
\begin{equation}\label{eq:cont}
\max\left\{|\Pi(\theta,x)-\Pi(\theta',x)|,
\left|{\partial_{\theta} \Pi}(\theta,x)-{\partial_{\theta} \Pi}(\theta',x)\right|,
\left|{\partial^2_{\theta} \Pi}(\theta,x)-{\partial^2_{\theta} \Pi}(\theta',x)\right|\right\}<\delta.
\end{equation}
We claim that the statement of the lemma holds for this choice of $\varepsilon$. Fix $x\in S^2$ and let $I\subset [0,1]$ be an interval of length $\varepsilon$. To reach a contradiction, assume that there exist distinct points $\theta_1,\theta_2,\theta_3\in I$ such that
\begin{displaymath}
\Pi(\theta_1,x)=\Pi(\theta_2,x)=\Pi(\theta_3,x)=0.
\end{displaymath}
Applying Rolle's theorem to the function $\theta \mapsto \Pi(\theta,x)$, we conclude that there are at least two points in $I$ where also the derivative $\partial_{\theta}\Pi(\cdot,x)$ vanishes, and, by another application of Rolle's theorem, we find a point in $I$ where also $\partial^2_{\theta}\Pi(\cdot,x)$ is zero. From \eqref{eq:cont} it follows that
\begin{displaymath}
\max\left\{|\Pi(\theta,x)|,
\left|{\partial_{\theta} \Pi}(\theta,x)\right|,
\left|{\partial^2_{\theta} \Pi}(\theta,x)\right|\right\}<\delta
\end{displaymath}
for all $\theta\in I$, which contradicts \eqref{c}. \end{proof}

\begin{proof}[Proof of Lemma \ref{CICJ}]
Our goal is to find $\varepsilon_1,\varepsilon_2>0$ and $L < 1$ such that
\begin{equation}\label{eq:goal_C_I_C_J}
|\rho_{\theta}(x-y)|\leq L |x-y|\quad \text{for all }x,y \in C_I \text{ and }\theta\in C_J,
\end{equation}
where  $I = [\theta_{1} - \varepsilon_{1}, \theta_{1} + \varepsilon_{1}]$ and $J = [\theta_{2} - \varepsilon_{2}, \theta_{2} + \varepsilon_{2}]$. Elements in $C_I$ are of the form $x=r_{x}\gamma(\theta_{x})$ with $r_{x} \in \R$ and $\theta_{x} \in I$. As we will explain now, it is enough to verify \eqref{eq:goal_C_I_C_J} for pairs $x = r_{x}\gamma(\theta_{x}) \in C_{I}$ and $y = r_{y}\gamma(\theta_{y}) \in C_{I}$ with $r_{x},r_{y} \geq 0$. Clearly, if \eqref{eq:goal_C_I_C_J} holds for all such pairs $x,y$ then it also holds for pairs $x,y$ with $r_x,r_{y} \leq 0$. In case $r_{x}$ and $r_{y}$ have opposite signs, \eqref{eq:goal_C_I_C_J} will be valid with some constants $L'\in [L,1)$ and $\varepsilon_1' < \varepsilon_{1}$. The precise condition on $\varepsilon_1'>0$ is that
\begin{equation}\label{eq:cond_L}
L^2 < \min_{\theta_x,\theta_y\in I'} \gamma(\theta_x)\cdot \gamma(\theta_y)\quad\text{with} \quad I':= [\theta_1-\varepsilon_1',\theta_1+\varepsilon_1'].
\end{equation}
This can be achieved by the continuity of $\gamma$, since $\gamma(\theta_{1}) \cdot \gamma(\theta_{1}) = 1$ and $L < 1$. Now, fix $x = r_{x}\gamma(\theta_{x}) \in C_{I}$ and $y = r_{y}\gamma(\theta_{y}) \in C_{I}$ with $r_{x}r_{y} \leq 0$. Then, assuming \eqref{eq:goal_C_I_C_J} for points on $C_{I}$ with the same sign, we have
\begin{displaymath}
|\rho_{\theta}(x-y)|\leq |\rho_{\theta}(x)|+|\rho_{\theta}(y)|\leq L(|x|+|y|)\leq dL |x-y|= L'|x-y|,
\end{displaymath}
where
\begin{displaymath}
d = \left(\min_{\theta_x,\theta_y \in I'}\gamma(\theta_x)\cdot \gamma(\theta_y)\right)^{-1/2}\geq 1
\quad
\text{and}
\quad
L'= d L < 1
\end{displaymath}
by \eqref{eq:cond_L}. The inequality $|x|+|y|\leq d |x-y|$ follows from
\begin{displaymath}
\left(|x|+|y|\right)^2= r_x^2 +r_y^2 -2r_xr_y \leq d^2(r_x^2+r_y^2-2r_xr_y (\gamma(\theta_x)\cdot \gamma(\theta_y)) = d^2 |x-y|^2.
\end{displaymath}

It remains to prove \eqref{eq:goal_C_I_C_J} for points $x=r_x \gamma(\theta_x)$ and $y=r_y \gamma(\theta_y)$ with $r_x,r_y \geq 0$ and $\theta_{x},\theta_{y} \in I$. Without loss of generality we assume that $r_x \leq r_y$. The differentiability of $\gamma$ at $\theta_y$ yields
\begin{align*} |\rho_{\theta}(x-y)|&=|[r_{x}\gamma(\theta_{x}) - r_{y}\gamma(\theta_{y})] \cdot \gamma(\theta)|\\
& \leq |\left[r_{x}\dot{\gamma}(\theta_{y})(\theta_{x} - \theta_{y}) + (r_{x} - r_{y})\gamma(\theta_{y})\right] \cdot \gamma(\theta)| + r_x o(|\theta_{x} - \theta_{y}|).
\end{align*}
Exploiting the assumption $\gamma(\theta_2)\notin \mathrm{span}(\{\gamma(\theta_1),\dot{\gamma}(\theta_1)\})$, we can find constants $L_{0} < 1$ and $\varepsilon_1,\varepsilon_2>0$ such that
\begin{align*}
|\left[r_{x}\dot{\gamma}(\theta_{y})(\theta_{x} - \theta_{y}) + (r_{x} - r_{y})\gamma(\theta_{y})\right] \cdot \gamma(\theta)|\leq L_{0} \sqrt{r_x^2 |\dot{\gamma}(\theta_y)|^2(\theta_x-\theta_y)^2 + (r_x-r_y)^2}
\end{align*}
for all $\theta_x,\theta_y \in  [\theta_1-\varepsilon_1,\theta_1+\varepsilon_1] =: I$ and $\theta\in [\theta_2-\varepsilon_2,\theta_2+\varepsilon_2] =: J$. Given $\varepsilon > 0$, we can make $\varepsilon_{1}$ smaller still to ensure that the inequality
\begin{displaymath}
|\dot{\gamma}(\theta_y)||\theta_x-\theta_y|\leq (1+\varepsilon) |\gamma(\theta_x)-\gamma(\theta_y)|
\end{displaymath}
holds whenever $|\theta_x-\theta_y|\leq 2\varepsilon_1$.
Choosing $\varepsilon > 0$ small enough, inserting this estimate to the upper bound for $|\rho_{\theta}(x-y)|$, and comparing the result with
\begin{displaymath}
|x-y|=\sqrt{r_x^2 + r_y^2 -2 r_x r_y \gamma(\theta_x)\cdot\gamma(\theta_y)}=\sqrt{r_xr_y |\gamma(\theta_x)-\gamma(\theta_y)|^2 + (r_x-r_y)^2},
\end{displaymath}
%we are left with proving an appropriate upper bound for the expression $|\theta_x-\theta_y|$.
%\textcolor{red}{The remaining part of this proof will be further polished.}
%Set $c_0:= \min_{\theta \in [0,1]}|\dot{\gamma}(\theta)|$ and note that $c_0>0$. Now for any small enough $\varepsilon>0$, we can choose a positive $\varepsilon_1(\varepsilon)\leq \varepsilon_1$ so that by the mean value theorem (applied to each component of $\gamma$) and by uniform continuity of $\dot{\gamma}$, we have
%\begin{align*}
%|\gamma(\theta_x)-\gamma(\theta_y)|\geq |\theta_x-\theta_y|\sqrt{|\dot{\gamma}(\theta_y)|^2-3\varepsilon}\geq |\theta_x-\theta_y||\dot{\gamma}(\theta_y)|\frac{\sqrt{c_0^2-3\varepsilon}}{c_0}
%\end{align*}
%for all $\theta_x,\theta_y \in (\theta_1-\varepsilon_1(\varepsilon),\theta_1+\varepsilon_1(\varepsilon))$. %Combining all the above estimates,
%we find
%\begin{align*}
%&|\rho_{\theta}(x-y)|&\\\leq \frac{c_0}{\sqrt{c_0-3\varepsilon}}\left(c \sqrt{r_xr_y |\eta(\theta_x)-\eta(\theta_y)|^2+(r_x-r_y)^2}+\tfrac{1}{c_0}o(\sqrt{r_xr_y}|\eta(\theta_x)-\eta(\theta_y)|)\right).
%\end{align*}
%Choosing $\varepsilon$ small enough and making $\varepsilon_1=\varepsilon_1(\varepsilon)$ even smaller if necessary (depending on the value of $c$ and $c_0$),
we see that $|\rho_{\theta}(x-y)|\leq L|x-y|$ for some $L \in (L_{0},1)$. We also need to know that the bounds implicit in $o(|\theta_{x} - \theta_{y}|)$ can be chosen small in a manner depending only on $\varepsilon_{1}$, but this follows from the $\mathcal{C}^2$ regularity of $\gamma$.
\end{proof}

\begin{proof}[Proof of Lemma \ref{etaNonDeg}]
In order to establish
\eqref{eq:non_deg_lambda}
for all $\theta \in U$, it is sufficient to show
\begin{equation}\label{eq:goal_eta_2}
\ddot{\eta}(\theta)\cdot (\eta(\theta)\times \dot{\eta}(\theta))\neq 0,\quad\text{for all }\theta \in U.
\end{equation}
This condition means precisely that $\eta(\theta),\dot{\eta}(\theta)$ and $\ddot{\eta}(\theta)$ are all of positive length, ${\eta}(\theta)$ and $\dot{\eta}(\theta)$ are not parallel and hence span a plane, and this plane does not contain $\ddot{\eta}(\theta)$. In order to prove \eqref{eq:goal_eta_2},
 we first evaluate
\begin{align*}
\eta = \frac{\gamma \times \dot{\gamma}}{|\gamma \times \dot{\gamma}|}=\frac{1}{|\dot{\gamma}|}\gamma \times \dot{\gamma}, \qquad \dot{\eta} = \left(\frac{1}{|\dot{\gamma}|}\right)'\gamma \times \dot{\gamma} + \frac{1}{|\dot{\gamma}|}\gamma \times \ddot{\gamma} \end{align*}
and
\begin{align*} \ddot{\eta} = \left(\frac{1}{|\dot{\gamma}|}\right)''\gamma \times \dot{\gamma}+ 2\left(\frac{1}{|\dot{\gamma}|}\right)'\gamma \times \ddot{\gamma}+\frac{1}{|\dot{\gamma}|}\dot{\gamma} \times \ddot{\gamma}+\frac{1}{|\dot{\gamma}|}\gamma \times \dddot{\gamma}.
\end{align*}
Then,
\begin{displaymath}
\eta \times \dot{\eta}=\frac{1}{|\dot{\gamma}|^2}(\gamma \times \dot{\gamma})\times (\gamma \times \ddot{\gamma})=\frac{1}{|\dot{\gamma}|^2}\left(\gamma \cdot \left(\dot{\gamma}\times \ddot{\gamma}\right)\right) \gamma.
\end{displaymath}
Finally,
\begin{displaymath}
\ddot{\eta}\cdot (\eta \times \dot{\eta})= \frac{1}{|\dot{\gamma}|^3}\left(\gamma \cdot (\dot{\gamma}\times \ddot{\gamma})\right)^2,
\end{displaymath}
which is non-vanishing,  due to condition \eqref{eq:curve_cond} for the curve $\gamma$. This concludes the proof of the lemma.
\end{proof}

\end{document}